    \let\Cref\crtCref
    \let\cref\crtcref
\definecolor{darkgreen}{rgb}{0,0.5,0}
\definecolor{darkred}{cmyk}{0,1,1,0.4}
\theoremstyle{plain}
\newtheorem{theorem}{Theorem}[section]
\crefname{maintheorem}{Theorem}{Theorems} 
\newtheorem*{theorem*}{Theorem}
\newtheorem{lemma}{Lemma}[section]
\newtheorem{corollary}{Corollary}[section]
\newtheorem{proposition}{Proposition}[section]
\newtheorem{question}{Question}[section]
\theoremstyle{definition}
\newtheorem{example}{Example}
\newtheorem{remark}{Remark}[section]
\numberwithin{equation}{section}
\DeclareMathAlphabet{\mathpzc}{OT1}{pzc}{m}{it}
\def\C {\mathbb{C}}
\def\N {\mathbb{N}}
\def\R {\mathbb{R}}
\def\XXint#1#2#3{{\setbox0=\hbox{$#1{#2#3}{%
\int}$ }
\vcenter{\hbox{$#2#3$ }}\kern-.6\wd0}}
\newcommand{\bA}{\mathbf{A}}
\newcommand{\bx}{\mathbf{x}}
\newcommand{\by}{\mathbf{y}}
\newcommand{\bz}{\mathbf{z}}
\newcommand{\cE}{\mathcal{E}}
\newcommand{\cM}{\mathcal{M}}
\newcommand{\dd}{{\rm d}} 
\DeclareMathOperator{\curl}{\mathrm{curl}}
\newcommand{\loc}{\mathrm{loc}}
\begin{document}
\title[Existence and uniqueness of solutions to Liouville equation]{Existence and uniqueness of solutions to Liouville equation}

\author[A. Ataei]{Alireza ATAEI}
\address{Alireza Ataei, Department of Mathematics, Uppsala University, Box 480, SE-751 06, Uppsala, Sweden}
\email{\url{alireza.ataei@math.uu.se}}

\subjclass[2020]{47J10, 35J20, 35Q82, 81V27, 81V70}
\keywords{Liouville equation, Nirenberg's problem, mean field equation, Chern--Simons-Schr\"odinger model}

\allowdisplaybreaks

\maketitle

\setcounter{tocdepth}{2}

\begin{abstract}
 We prove some general results on the existence and uniqueness of solutions to the Liouville equation. Then, we discuss the sharpness and possible generalizations. Finally,
 we give several applications, arising in both mathematics and physics. 
\end{abstract}

\tableofcontents

\section{Introduction}
In this work, we study the existence and uniqueness of the solutions $\psi \in L^1_{\loc}(\R^2)$ to
\begin{equation}
\label{eq:mainequation}
\begin{aligned}
  &  -\Delta \psi =4\pi \beta V e^{\psi},\quad \textup{weakly in } \R^2,\\
  &  \int_{\R^2} V e^{\psi} =1,
\end{aligned}
\end{equation}
where $\beta \in \R \setminus 0$, $V$ is a locally integrable function, and $V e^{\psi} \in L^1(\R^2)$. Note that \eqref{eq:mainequation} is defined in the sense of distributions. The equation in \eqref{eq:mainequation} is known as the Liouville equation, and it has been considered in the literature due to its application in both mathematics and physics. We briefly mention some of the motivations here.

In differential geometry, the Liouville equation provides the conformally flat metrics on $\R^2$ with the Gaussian curvature $V$ and the total curvature $2 \pi\beta$; see \cite{A-38,ALN-24,A-86,CK-94,CK-00,CL-91,CL-93,CL-97,CL-98,CL-99,CL-00,CYZ-20,CN-91,CN-91(2),HY-98,Li-24,Lio-53,PT-01,M-84,M-85,N-82(2),P-898,S-73,WuZ-24}. If $\beta = 2$, by stereographic projection, one obtains all the possible metrics on the two dimensional unit sphere being conformally equivalent to the Euclidean unit sphere, which is known as the Nirenberg's problem; see \cite{A-21,CY-87,CY-88,CD-87,CL-95,CL-95(2),H-86,KW-74,S-05,XY-93}. Moreover, the problem is also generalized to metrics with conic singularities, where the metrics can vanish at finite points; see \cite{BDM-11,B-71,BT-02,B-71,CL-02,CL-03,CL-10,CL-15,E-04,E-20,E-21,P-905,T-08,T-89,T-91,WZ-18}. Initiated by a problem of Brezis and Merle; see \cite{BM-91}, the question of the blow-up property of solutions to the Liouville equation is considered in the analysis to understand the regularity properties of the solutions; see \cite{BM-07, BT-02,BT-07,BYZ-24,G-12,Li-99,LS-94,Wu-24,WZ-21,WZ-22,WZ-24,Z-06,Z-09}. In statistical mechanics, \eqref{eq:mainequation} is referred to as the mean field equation and has played a key role in understanding the two-dimensional turbulent flows; see \cite{BD-15,BJLY-18,CLMP-92,CLMP-95,CK-94,ES-93, ES-78,K-93, KL-97,Lin-00,O-49,PD-93,SO-90,TY-04,W-92}. Finally, \eqref{eq:mainequation} is used to study the ground states of the Chern-Simons-Schr\"odinger model used for the effective theory of many body anyons, which are two-dimensional quasi-particles, and the fractional quantum hall effect; see \cite{ALN-24,Dun-95,Dun-99,EHI-91,EHI-91(2),HY-98,HZ-09,IL-92,JW-90,JP-92,RS-20,SY-92,Z-92,T-08,ZHK-89}.

\subsection{Existence} Our first two results show the existence of solutions to \eqref{eq:mainequation}. To bring the results, for a function $V \in L^1_{\loc}(\R^2)$, we define $V^+ := \max(V,0),V^- = V^+ - V, D(\bx,r) \subset \R^2$ is the disk of radius $r>0$ centered at $\bx \in \R^2,$ and $\psi_0 \in C^{\infty}(\R^2)$ is a fixed radially symmetric function, satisfying $\psi_0(\bx) = 2 \beta \log|\bx|$ for $\bx \in \R^2 \setminus D(0,1)$. Moreover, define \begin{align*}
    \alpha(V) := \sup \left\{\alpha: \int_{\R^2 \setminus D(0,1)} |V(\bx)| \, |\bx|^{2\alpha} \, \dd \bx < \infty    \right\}.
\end{align*}

\begin{theorem}
\label{thm:maintheorem}
    Let $V \in L^1_{\loc}(\R^2)$ be a radially symmetric function which satisfies $V \geq C$ a.e. in $D(0,R_2) \setminus D(0,R_1)$ for some constants  $C>0, R_2> R_1>0$ and 
\begin{align}
    \label{eq:minimumcondionV}
\beta \geq -\alpha(V).
\end{align}
Then, the following holds:
\begin{enumerate}
    \item If $\beta>0$ and 
    \begin{equation}
    \begin{aligned}
    \label{eq:conditionondelta}
   &  \int_{\R^2 \setminus D(0,1)} V^-(\bx) |\bx|^{-2\beta} \, \dd \bx< \infty,\\
&\int_{D(0,1)} V^+(\bx) \,|\bx|^{-\beta-\delta} \, \dd \bx + \int_{\R^2 \setminus D(0,1)} V^+(\bx) |\bx|^{-\beta+\delta} \, \dd \bx< \infty,
\end{aligned}
\end{equation}
for some $\delta >0,$ then there exists a radially symmetric function $\psi \in H^1_{\loc}(\R^2) \cap C^1(\R^2 \setminus 0)$ such that 
    \begin{equation*}
    \begin{aligned}
&-\Delta \psi = 4 \pi \beta\, V e^{\psi}, \quad \textup{weakly in } \R^2, \\
&\int_{\R^2} |V| e^{\psi} < \infty, \,\int_{\R^2} V e^{\psi} =1, \, \int_{\R^2} |\nabla (\psi + \psi_0)|^2 < \infty,\\
& \lim_{\bx \to \infty} \frac{\psi(\bx)}{\log |\bx|} = -2\beta.
    \end{aligned}
    \end{equation*} Moreover, there exists an upper bound for 
    \begin{align}
    \label{eq:normofpsi}
        \int_{\R^2} |V| e^{\psi} + \|\nabla (\psi+\psi_0)\|_{L^2(\R^2)},\, 
    \end{align}
     which depends continuously on $\beta,R_1,R_2,C,\delta,$ and
    \begin{align*}
        \int_{D(0,1)} V^+(\bx) \, |\bx|^{-\beta-\delta} \, \dd \bx + \int_{\R^2 \setminus D(0,1)} V^+(\bx)\, |\bx|^{-\beta + \delta}\, \dd \bx + \int_{\R^2 \setminus D(0,1)} V^-(\bx) \, |\bx|^{-2\beta} \, \dd \bx.
    \end{align*}
    \item If $-\alpha(V) \leq \beta<0$ and $V$ is non-negative, then there exists a radially symmetric function $\psi \in H^{1}_{\loc}(\R^2) \cap C^1(\R^2 \setminus 0)$ such that 
    \begin{equation}
    \begin{aligned}
&-\Delta \psi = 4 \pi \beta\,\, V e^{\psi}, \quad \textup{weakly in } \R^2, \\
&\int_{\R^2} V e^{\psi} =1,\\
& \lim_{\bx \to \infty} \frac{\psi(\bx)}{\log |\bx|} = -2\beta.
    \end{aligned}
    \end{equation}    
\end{enumerate}

\end{theorem}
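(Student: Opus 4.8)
\smallskip
\noindent\textbf{Proof strategy.}
The idea is to renormalise the unknown so that it has finite Dirichlet energy, argue variationally, and read off the pointwise regularity and the behaviour at infinity from the radial structure afterwards. Write $\psi=\phi-\psi_0$; since $\psi_0\in C^\infty(\R^2)$ equals $2\beta\log|\bx|$ outside $D(0,1)$, the function $g:=\Delta\psi_0$ is smooth, supported in $\overline{D(0,1)}$, and has $\int_{\R^2}g=4\pi\beta$, while $W:=Ve^{-\psi_0}$ is nonnegative wherever $V$ is, behaves like $V|\bx|^{-2\beta}$ near infinity, and like a fixed constant times $V$ near $0$. Then \eqref{eq:mainequation} becomes: find a radial $\phi$ with $\nabla\phi\in L^2(\R^2)$ such that
\[
-\Delta\phi=4\pi\beta\,We^{\phi}-g\quad\text{in }\R^2,\qquad \int_{\R^2}We^{\phi}=1 .
\]
In the variable $t=\log|\bx|$, $u(t):=\psi(e^{t})$ solves the ODE $\ddot u=-Q(t)e^{u}$ with $Q(t)=4\pi\beta e^{2t}V(e^{t})$; the constraint reads $\int_{\R}Qe^{u}=2\beta$, and together with the $H^1_\loc$ (resp.\ finite-energy) requirement at the origin it forces $\dot u(-\infty)=0$ and hence $\dot u(+\infty)=-2\beta$, so $u(t)/t\to-2\beta$. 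This picture makes transparent why the hypotheses \eqref{eq:minimumcondionV}--\eqref{eq:conditionondelta} enter, the exponent $\delta$ providing the slack needed in the a priori estimates.

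\smallskip
\noindent\textbf{Part (1).}
For $\beta>0$ I would minimise
\[
F(\phi)=\tfrac12\int_{\R^2}|\nabla\phi|^2+\int_{\R^2}g\phi
\]
over the set $\mathcal A$ of radial $\phi$ with $\nabla\phi\in L^2(\R^2)$ and $\int_{\R^2}We^{\phi}=1$, which is nonempty because $V\ge C>0$ on $D(0,R_2)\setminus D(0,R_1)$. The substance of the argument --- delicate when $\beta$ is large --- is that $F$ is bounded below on $\mathcal A$ and that minimising sequences are precompact; both rest on a Moser--Trudinger/Onofri-type inequality on $\R^2$ that estimates $\log\int We^{\phi}$ by $\tfrac1{16\pi}\int|\nabla\phi|^2$ plus a controlled linear term, with the weight $W$ handled by splitting at $|\bx|=1$ and combining \eqref{eq:conditionondelta} with the radial bound $|\phi(r)-\phi(1)|\lesssim(\log r)^{1/2}\|\nabla\phi\|_{L^2(\R^2)}$ for $r\ge2$ --- the margin $\delta$ being exactly what lets the factors $e^{\pm c(\log r)^{1/2}}$ be absorbed by $|\bx|^{\mp\delta}$. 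Given coercivity, a minimiser $\phi$ exists (weak limit of gradients, additive constant fixed by the constraint, strong convergence in the weighted $L^1$ in which $We^{\phi}$ lives, thanks to the strict exponent gap from $\delta$ and to radial compactness), and it solves the Euler--Lagrange equation $-\Delta\phi=\mu We^{\phi}-g$; integrating over $\R^2$ and using that a radial finite-energy function has vanishing flux at infinity (so $\int_{\R^2}(-\Delta\phi)=0$) pins the multiplier to $\mu=\int_{\R^2}g=4\pi\beta$, yielding the PDE and the normalisation $\int_{\R^2}Ve^{\psi}=1$. Following the constants through the Moser--Trudinger estimate and the compactness step produces the asserted continuous dependence of the bound for \eqref{eq:normofpsi}.

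\smallskip
\noindent\textbf{Part (2) and the remaining assertions.}
When $-\alpha(V)\le\beta<0$ and $V\ge0$ the sign is favourable: $4\pi\beta We^{\phi}\le0$, i.e.\ $u$ is convex, so control near the origin is automatic. Here I would minimise the \emph{unconstrained} functional
\[
G(\phi)=\tfrac12\int_{\R^2}|\nabla\phi|^2+\int_{\R^2}g\phi-4\pi\beta\int_{\R^2}We^{\phi}
\]
over radial $\phi$ with $\nabla\phi\in L^2(\R^2)$: since $-4\pi\beta>0$ and $\int_{\R^2}g=4\pi\beta<0$, the shift $\phi\mapsto\phi+c$ shows $G$ is coercive in the constant direction (the exponential term dominates as $c\to+\infty$, the linear one as $c\to-\infty$), the non-constant part being controlled by the Dirichlet energy, and $G$ is lower semicontinuous by Fatou. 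A minimiser solves $-\Delta\phi=4\pi\beta We^{\phi}-g$, and integrating with vanishing flux now \emph{automatically} gives $\int_{\R^2}We^{\phi}=1$ --- no multiplier is needed, since $g$ already carries total mass $4\pi\beta$. In both parts the rest is local: radial symmetry gives $(r\psi')'=-4\pi\beta\,rVe^{\psi}\in L^1_\loc(0,\infty)$, hence $\psi\in C^1(\R^2\setminus 0)$; and from the representation $\psi(\bx)=-2\beta\int_{\R^2}\log|\bx-\by|\,V(\by)e^{\psi(\by)}\,\dd\by+\mathrm{const}$ together with $\int_{\R^2}Ve^{\psi}=1$ and dominated convergence (licensed by \eqref{eq:conditionondelta}, resp.\ by $\beta\ge-\alpha(V)$) one obtains $\psi(\bx)/\log|\bx|\to-2\beta$.

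\smallskip
\noindent\textbf{Main obstacle.}
The crux is the analysis at infinity under the borderline integrability hypotheses: proving that $F$ (resp.\ $G$) is coercive and that minimising sequences do not leak mass at infinity requires a Moser--Trudinger/Onofri inequality finely matched to the weight $W=Ve^{-\psi_0}$ rather than the bare one, and --- for $F$ when $\beta$ is large --- a careful use of the way \eqref{eq:conditionondelta} forces the mass of $W$ to be spread out. In part (2) the equality case $\beta=-\alpha(V)$ is the subtle point: there the generic $(\log r)^{1/2}$ growth of radial finite-energy functions is not quite enough to guarantee $We^{\phi}\in L^1(\R^2)$, and one must upgrade it to the sharper decay of genuine solutions --- e.g.\ by bootstrapping in the potential representation above --- before the minimiser can be identified as a solution with the correct total mass.
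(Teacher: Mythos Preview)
Your Part~(1) strategy is essentially the paper's: renormalise by $\psi_0$, set $V_1=Ve^{-\psi_0}$, and minimise a translation-invariant Moser-type functional in the radial class. Two differences are worth flagging. First, the paper does not work directly on $\R^2$ but on an exhaustion by disks $D(0,n)$, obtaining minimisers $\phi_n$ with uniform bounds and then passing to the limit; this sidesteps the compactness issues you allude to. Second, and more importantly, the paper's coercivity for $\beta>0$ does \emph{not} rest on a Moser--Trudinger/Onofri inequality with constant $1/(16\pi)$ --- that would only give $\tfrac12-\tfrac{\beta}{4}$ in front of the Dirichlet term and fails precisely for $\beta\ge 2$, which is the interesting regime (cf.\ the paper's Examples~3.1--3.2). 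Instead, the paper uses \emph{only} the elementary radial bound $|\varphi(r)-\varphi(1)|^2\le \tfrac{1}{2\pi}\|\nabla\varphi\|_2^2\,|\log r|$ together with Young's inequality to get $\log\!\int V_1^+e^{\varphi-\varphi(1)}\le \tfrac{1}{8\pi(\beta+\delta)}\|\nabla\varphi\|_2^2+\log\!\int V_1^+e^{(\beta+\delta)|\log r|}$, yielding the coefficient $\tfrac12-\tfrac{\beta}{2(\beta+\delta)}>0$; the weighted integrability in \eqref{eq:conditionondelta} is exactly what makes the last logarithm finite. Your text mentions both ingredients, but the way you phrase it suggests the Moser--Trudinger constant is doing the work, which would be a genuine gap for large $\beta$.

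Your Part~(2) is \emph{not} how the paper proceeds, and your route has a real obstacle. The paper does \emph{not} renormalise by $\psi_0$ when $\beta<0$. Instead it minimises $\tfrac12\!\int|\nabla\varphi|^2-4\pi\beta\log\!\int_D Ve^{\varphi}$ on finite disks (here Trudinger--Moser is used, and only here), and then exploits the sign $V\ge 0$: the minimiser $\phi_n$ is radially increasing with the explicit pointwise gradient bound $r|\phi_n'(r)|\le -2\beta$, from which one reads off two-sided pointwise bounds on $\psi_n=\phi_n-\log\!\int Ve^{\phi_n}$ (the paper's \eqref{eq:upperbound}--\eqref{eq:lowerbound}) that make the limit $n\to\infty$ and the dominated-convergence step immediate. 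Your functional $G$ instead needs $\int We^{\phi}<\infty$ for every competitor $\phi\in\dot H^1_{\mathrm{rad}}$; since such $\phi$ can grow like $(\log r)^{1/2}$, this requires $\int V|\bx|^{-2\beta}e^{C\sqrt{\log|\bx|}}<\infty$, which holds for $\beta>-\alpha(V)$ strictly but not, in general, at the endpoint. The paper handles the borderline $\beta=-\alpha(V)$ by a separate monotone-limit argument: for $\beta_i\downarrow-\alpha(V)$ the solutions satisfy $\psi_{\beta_{i+1}}+\log|\beta_{i+1}|>\psi_{\beta_i}+\log|\beta_i|$ by the maximum principle, and one passes to the limit using monotone convergence. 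Your ``bootstrapping in the potential representation'' does not obviously supply this step, because the representation formula presupposes $Ve^{\psi}\in L^1(\R^2)$, which is exactly what is in doubt at the endpoint before you have a solution in hand.
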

To compare our result with the literature, we mention the following remarks. 
\begin{remark}
   As far as we know, in the literature, it is assumed that $V$ is at least locally bounded to obtain a solution; see \cite{ALN-24,A-86,CK-94,CK-00,CL-91,CL-97,CL-98,CL-99,CL-00,CN-91,CN-91(2),CYZ-20,Li-24,Lio-53, Lin-00, M-84,M-85,N-82(2),S-05}. However, we have only the local integrability condition to derive a solution. The only exception we found is $V(\bx) = |\bx|^{-n}$ for $n<2$, which has been studied separately in \cite{PT-01}. 
\end{remark}

\begin{remark}
\label{rem:assymtoticradialsol}
    If $\psi \in L^1_{\loc}(\R^2) \cap C^1(\R^2 \setminus 0) $ is a radially symmetric solution to \eqref{eq:mainequation}, satisfying $V e^{\psi} \in L^1(\R^2)$, then by regularization and taking the integral of both sides of the equation in \eqref{eq:mainequation}, we derive that 
\begin{align*}
     r \partial_r \psi(r) = -2\beta \int_{D(0,r)} V  e^{\psi}, \quad r >0.
\end{align*}
Since $\int_{\R^2} V e^{\psi} =1$ and $\int_{\R^2} |V| e^{\psi} < \infty$, we conclude that
\begin{align*}
  \lim_{\bx \to \infty}  \frac{\psi(\bx)}{\log|\bx|} = -2\beta.
\end{align*}
Hence, for every $\delta >0$, we have 
\begin{align*}
    \psi(r) \geq -2 (\beta+\delta) \log r,
\end{align*}
 for $r>0$ large enough, which depends on $\delta$. In conclusion, by the assumption $\int_{\R^2} |V| e^{\psi }< \infty$, we conclude that 
\begin{align}
\label{eq:weakercondition}
    \int_{\R^2 \setminus D(0,1)} |V(\bx)| \, |\bx|^{-2(\beta +\delta)} \, \dd \bx <\infty,
\end{align}
for every $\delta >0.$ Hence, the condition \eqref{eq:minimumcondionV} is necessary for the existence of radially symmetric solutions. For the case of $\beta>0$, one can weaken the condition 
\begin{align*}
    \int_{\R^2 \setminus D(0,1)} V^-(\bx)  |\bx|^{-2\beta} \, \dd \bx < \infty,
\end{align*}
to \eqref{eq:weakercondition} if there exists a function $\varphi \in C^{\infty}([0,\infty))$ such that 
\begin{align}
\label{eq:keyconditionboundedenergy}
    \int_{1}^{\infty} V^- r^{-2\beta +1} e^{\varphi}   \,\dd r + \int_{1}^{\infty} |\partial_r \varphi|^2  r\, \dd r< \infty.
\end{align}
Indeed, we have used \eqref{eq:keyconditionboundedenergy} to derive that a Moser type energy functional is uniformly bounded from above. In particular, if 
\begin{align*}
  &  V^-(r) \leq C \left(r^2+1\right)^{\beta-1}, \quad \textup{for every } r \geq 0,\\
   & \int_{D(0,1)} V^+(\bx) \,|\bx|^{-\beta-\delta} \, \dd \bx + \int_{\R^2 \setminus D(0,1)} V^+(\bx) |\bx|^{-\beta+\delta} \, \dd \bx< \infty,
\end{align*}
for some positive constants $C,\delta,\beta$, then, by taking $\varphi := -2\log (\log (2+r))$ in \eqref{eq:keyconditionboundedenergy}, there exists a radially symmetric solution to \eqref{eq:mainequation}. This has been observed before if $V$ is asymptotically negative; see \cite[Thm. 1.1]{CL-99}.
 
\end{remark}

\begin{remark}
    The condition \eqref{eq:conditionondelta} implies that for $\beta \geq 2$, $V$ needs to vanish fast enough at the origin and infinity for the existence of a solution to \eqref{eq:mainequation}. In particular, if $V$ is continuous and $\beta \geq 2$, the condition \eqref{eq:conditionondelta} implies that $V$ should vanish at the origin. As far as we know, this phenomenon has been previously observed in the literature only in special cases; see \cite{CL-98,CL-99}, and is necessary for many general equations; see Corollary \ref{cor:existencepositiveassymdecay}.
     We warn the reader regarding \cite[Thm. 3]{M-85} which claims that the radially symmetric solutions to \eqref{eq:mainequation} exist always if $V$ is radially symmetric, decays fast enough, and $V(0)>0$. This issue was partially pointed out in \cite{CL-93,CL-98,CL-98(2),CL-99,W-92}, however, it is not well understood in the literature. By setting $V(r) = e^{-r^2}$ and using Corollary \ref{cor:existencepositiveassymdecay}, we get the solution to \eqref{eq:mainequation} exists if and only if $\beta <2$. Hence, even exponential decay is not sufficient to obtain a solution if $\beta \geq 2$. As we see in Example \ref{ex:blowupenergy}, the variational argument fails as the infimum of the Moser energy functional
      \begin{align}
     \label{eq:mosertrudingerenergyfunc}
         \cE_n[\varphi] := \frac{1}{2} \int_{D(0,n)} |\nabla \varphi|^2 - 4\pi \beta \log \left(\int_{D(0,n)} V e^{\varphi} \right), 
     \end{align}
     over radially symmetric functions $\varphi \in C^{\infty}_0(D(0,n))$ which satisfy $\int_{D(0,n)} V e^{\varphi}>0$, is $-\infty$ for any large enough $n$ if $V$ does not become zero fast enough at the origin.
    
\end{remark}

\begin{remark}
    We mention that for $\beta <0$ in Theorem \ref{thm:maintheorem}, under the extra assumption 
\begin{align}
\label{eq:strongerassumnegativebeta}
    \int_{\R^2 \setminus D(0,1)} V(\bx) \, |\bx|^{-2\beta}  \, \log|\bx| \, \dd \bx < \infty,
\end{align}
we can derive that 
\begin{align*}
    \int_{\R^2} |\nabla (\psi+ \psi_0)|^2 < \infty;
\end{align*}
see Remark \ref{rem:extracondiboundnegativebeta}. This has already been observed with the stronger assumption \begin{align*}
    \int_{\R^2} V(\bx) \, |\bx|^{-2\beta'} \, \dd \bx < \infty, 
\end{align*}
for some $\beta'< \beta$; see \cite{CK-00,CL-97,CL-99,CL-00,CN-91,CN-91(2),M-84,N-82(2),WuZ-24}.
\end{remark}

\begin{remark}
 The regularity of the solutions in Theorem \ref{thm:maintheorem} is optimal if one does not assume any other conditions on $V$, see Example \ref{ex:regularityexample}. If we assume that $V$ is non-negative, satisfying \eqref{eq:conditionondelta}, and $\psi$ is a solution to \eqref{eq:mainequation}, then $\psi$ is also locally $\alpha$-H\"older continuous for every $\alpha < \min(\beta + \delta,1);$ see Proposition \ref{prop:regularity}.

\end{remark}

\begin{remark}
     Our proof gives a constructive method to obtain the solutions and derives an explicit upper bound for \eqref{eq:normofpsi}, depending on the initial data, which, as far as we know, have not been previously studied. We use a simple variational approach that relies on the minimizers $\phi_n$ of the Moser energy functional $\cE_n[\varphi]$; see  \eqref{eq:mosertrudingerenergyfunc}, over radially symmetric functions $\varphi \in H^1_0(D(0,n))$ which satisfy $\int_{D(0,n)} V e^{\varphi}>0$. The key idea is to apply the condition \eqref{eq:conditionondelta} to prove that the functional  \eqref{eq:mosertrudingerenergyfunc} is coercive and bounded uniformly from above, where the bounds are independent of $n$. 
    Then, we show that $\phi_n - \log \left(\int_{D(0,n)} V e^{\phi_n} \right)$ converges as $n \to \infty.$ Although the minimizer of \eqref{eq:mosertrudingerenergyfunc} do not exist over $C^{\infty}_0(D(0,n))$ if $\beta \geq 2$, restricting to only radially symmetric functions resolves this issue if \eqref{eq:conditionondelta} holds; see Example \ref{ex:nonradialminimizing}. Moreover, unlike the previous works, we do not use a weighted Sobolev inequality, the ODE approach, Perron's method, complex analysis, or Statistical methods to derive the solutions; see \cite{ALN-24,A-86,CK-94,CK-00,CL-91,CL-93,CL-97,CL-98,CL-99,CL-00,CYZ-20,CN-91,CN-91(2),HY-98,Li-24,Lio-53,PT-01,M-84,M-85,N-82(2),P-898,S-73,WuZ-24}.
\end{remark}

 Now, by Theorem \ref{thm:maintheorem}, we obtain the following immediate corollary.
\begin{corollary}
\label{cor:assymptotic}
    Let $\beta >0, V \in L^1_{\loc}(\R^2)$ be a radially symmetric function satisfying $V \geq C$ a.e. in $D(0,R_2) \setminus D(0,R_1)$ for some constants  $C>0, R_2> R_1>0$. Assume that
    \begin{equation}
\begin{aligned}
\label{eq:asymptoticofV}
& V^-(r) \leq  C_0 r^{l_0}, \quad \textup{a.e. } r \geq 1,\\
&V^+(r) \leq C_1 r^{l_1}, \quad \textup{a.e. } r \leq 1,\\
&V^+(r) \leq C_2 r^{l_2}, \quad \textup{a.e. } r \geq 1, 
\end{aligned}
\end{equation}
where $C_0,C_1,C_2$ are positive and $l_1,l_2,l_3 \in \R$. Then, there exists a radially symmetric solution $\psi \in H^1_{\loc}(\R^2) \cap C^1(\R^2 \setminus 0)$ to \eqref{eq:mainequation} if $l_0 < 2\beta-2, l_1 > \beta -2, l_2< \beta-2$.

\end{corollary}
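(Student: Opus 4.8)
The plan is to verify that the polynomial bounds \eqref{eq:asymptoticofV} are a special case of the hypotheses of Theorem \ref{thm:maintheorem}(1), and then invoke that theorem directly. First I would check condition \eqref{eq:minimumcondionV}, i.e. $\beta \geq -\alpha(V)$. Since $\beta > 0$ and $\alpha(V) \geq 0$ whenever $|V|$ has any polynomial decay at infinity, this is automatic; more precisely, the bounds $V^-(r) \leq C_0 r^{l_0}$ with $l_0 < 2\beta - 2$ and $V^+(r) \leq C_2 r^{l_2}$ with $l_2 < \beta - 2 < 2\beta - 2$ together give $|V(\bx)| \lesssim |\bx|^{\max(l_0,l_2)}$ for $|\bx| \geq 1$, so $\int_{\R^2 \setminus D(0,1)} |V(\bx)| |\bx|^{2\alpha} \dd\bx < \infty$ for any $\alpha < -1 - \tfrac12\max(l_0,l_2)$; since $\max(l_0,l_2) < 2\beta - 2$, this interval of admissible $\alpha$ extends past $\beta$, hence $\alpha(V) \geq \beta > -\beta$. (The local regularity hypothesis $V \geq C$ on an annulus is assumed verbatim, so nothing is needed there.)

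Next I would produce the exponent $\delta > 0$ required in \eqref{eq:conditionondelta}. For the negative-part condition at infinity, $\int_{\R^2 \setminus D(0,1)} V^-(\bx) |\bx|^{-2\beta} \dd\bx \lesssim \int_1^\infty r^{l_0 - 2\beta + 1} \dd r$, which converges precisely because $l_0 - 2\beta + 1 < -1$, i.e. $l_0 < 2\beta - 2$. For the positive part near the origin, $\int_{D(0,1)} V^+(\bx) |\bx|^{-\beta - \delta} \dd\bx \lesssim \int_0^1 r^{l_1 - \beta - \delta + 1} \dd r$, which is finite iff $l_1 - \beta - \delta + 1 > -1$, i.e. $\delta < l_1 - \beta + 2$; by hypothesis $l_1 > \beta - 2$, so this upper bound for $\delta$ is strictly positive. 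For the positive part at infinity, $\int_{\R^2 \setminus D(0,1)} V^+(\bx) |\bx|^{-\beta + \delta} \dd\bx \lesssim \int_1^\infty r^{l_2 - \beta + \delta + 1} \dd r$, finite iff $l_2 - \beta + \delta + 1 < -1$, i.e. $\delta < \beta - 2 - l_2$; by hypothesis $l_2 < \beta - 2$, so this upper bound is also strictly positive. Choosing any $\delta \in \bigl(0, \min(l_1 - \beta + 2,\ \beta - 2 - l_2)\bigr)$ makes all three integrals in \eqref{eq:conditionondelta} finite simultaneously.

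With \eqref{eq:minimumcondionV} and \eqref{eq:conditionondelta} established, Theorem \ref{thm:maintheorem}(1) applies and yields exactly the asserted radially symmetric solution $\psi \in H^1_{\loc}(\R^2) \cap C^1(\R^2 \setminus 0)$ to \eqref{eq:mainequation}. There is no real obstacle here: the corollary is purely a matter of translating polynomial growth/decay rates into the integrability conditions of the main theorem, and the only point requiring a moment's care is confirming that the two constraints on $\delta$ arising from the positive part of $V$ are compatible — which is exactly what the strict inequalities $l_1 > \beta - 2$ and $l_2 < \beta - 2$ guarantee. (I note in passing that the hypothesis refers to $l_1, l_2, l_3$ but only $l_0, l_1, l_2$ appear in \eqref{eq:asymptoticofV}; I would read $l_3$ as a typo for $l_0$.)
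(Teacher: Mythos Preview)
Your approach is correct and matches the paper's: the corollary is stated there as an ``immediate'' consequence of Theorem~\ref{thm:maintheorem}, and your verification that the polynomial bounds \eqref{eq:asymptoticofV} imply \eqref{eq:minimumcondionV} and \eqref{eq:conditionondelta} is exactly what is needed.

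One small slip: your claim that the admissible range for $\alpha$ ``extends past $\beta$'', hence $\alpha(V)\geq\beta$, is not right. From $\max(l_0,l_2)<2\beta-2$ you get $-1-\tfrac12\max(l_0,l_2)>-\beta$, so the range extends past $-\beta$ and yields $\alpha(V)>-\beta$, not $\alpha(V)\geq\beta$. This is already what \eqref{eq:minimumcondionV} requires, so your final conclusion stands; just correct the intermediate inequality. The rest of the computation (the choice of $\delta$ and the three integrability checks) is clean.
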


\begin{remark}
    We remark that for $\beta >2$, in Corollary \ref{cor:assymptotic}, we can take $l_2>0$ which allows $V$ to blow up at infinity. Moreover, if $0<\beta <2$, by taking $\beta-2<l_1 <0,l_2 < \beta-2$ we allow $V$ to blow up at zero. Furthermore, the bounds $l_1 > \beta-2, l_2 <\beta -2$ are sharp in the sense that there are counterexamples for $l_1 \leq \beta -2, l_2 < \beta-2$ or $l_1>\beta -2, l_2 \geq  \beta-2$. As an example, we can consider $ V(r) =r^{\beta-2} e^{-r^2}$ and use Corollary \ref{cor:existencepositiveassymdecay} to prove that there exists no solution to \eqref{eq:mainequation}. More surprisingly, there are examples of $V$, where $l_2 = 0$ and $l_0>0,l_1>0$ are arbitrary, for which there exist solutions to \eqref{eq:mainequation} but none are radially symmetric; see
    the constructed example in \cite[Thm. 3]{CL-95}. The assumption $l_0 < 2\beta-2$ can be improved to $l_0 \leq  2\beta -2$ by using the observation made in Remark \ref{rem:assymtoticradialsol}.
\end{remark}

By using Pokhozhaev identity and Theorem \ref{thm:maintheorem}, we prove the following result.

\begin{corollary}
\label{cor:existencepositiveassymdecay}
     Let $\beta \in \R \setminus 0$, $n > -2$, and $V \in C(\R^2) \cap C^1(\R^2 \setminus 0) \setminus 0$ be a non-negative and radially decreasing function such that $r V'(r) \in C([0,\infty))$. If $n > \beta -2$ and
     \begin{equation}
      \label{eq:uniformbound}
  \begin{aligned}
  \int_1^{\infty} r^{n+1-\beta+\delta} V(r) \, \dd r < \infty, \quad \textup{if } \beta >0,\\
  \int_1^{\infty} r^{n+1-2\beta} V(r) \, \dd r < \infty, \quad \textup{if } \beta <0,
\end{aligned}
\end{equation}
for some $\delta >0$, then there exists a radially symmetric $\psi \in C^1(\R^2 \setminus 0)$ which is $\alpha-$H\"older continuous in $\R^2$ for some $\alpha>0$ and satisfies
\begin{equation}
\label{eq:equationexistencepositvedecay}
\begin{aligned}
   & -\Delta \psi(\bx) = 4 \pi \beta |\bx|^n V(\bx) e^{\psi(\bx)},\quad \bx \in \R^2,\\
    & \int_{\R^2} |\bx|^n V(\bx) e^{\psi(\bx)}  \, \dd \bx= 1,\\
    & \lim_{\bx \to \infty} \frac{\psi(\bx)}{\log |\bx|} = -2\beta.
\end{aligned}
\end{equation}
 Moreover, if 
\begin{equation}
      \label{eq:uniformbound2}
  \begin{aligned}
  \int_1^{\infty} r^{n+1-2\beta+\delta} V(r) \, \dd r < \infty,
\end{aligned}
\end{equation}
for some $\delta >0$ and there exists a function $\psi \in L^1_{\loc}(\R^2)$ (not necessarily radially symmetric) which satisfies \eqref{eq:equationexistencepositvedecay}, then $n>\beta -2$ and
\begin{align}
\label{eq:indexformula}
\beta -2-n = \int_{\R^2} |\bx|^n e^{\psi(\bx)} \bx \cdot \nabla V(\bx) \, \dd \bx.
\end{align}
\end{corollary}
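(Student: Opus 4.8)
\emph{Existence and regularity (first part of the corollary).} My plan is to apply Theorem~\ref{thm:maintheorem} to the radially symmetric potential $\widetilde V:=|\cdot|^{n}V$. Local integrability of $\widetilde V$ is exactly the hypothesis $n>-2$ (with continuity of $V$); and since $V\ge0$ is continuous, radially decreasing, and $V\not\equiv0$, picking $r_0>0$ with $V(r_0)>0$ gives $\widetilde V\ge c>0$ on $D(0,r_0)\setminus D(0,r_0/2)$, which supplies the annulus hypothesis. Because $\widetilde V\ge0$, every $\widetilde V^{-}$ term in \eqref{eq:minimumcondionV}--\eqref{eq:conditionondelta} drops out; from $\int_{\R^2\setminus D(0,1)}\widetilde V|\bx|^{2\alpha}\,\dd\bx=2\pi\int_1^{\infty}r^{n+1+2\alpha}V\,\dd r$ and \eqref{eq:uniformbound} one reads off $\beta\ge-\alpha(\widetilde V)$, and for the $\widetilde V^{+}$ part of \eqref{eq:conditionondelta} the outer integral is controlled by \eqref{eq:uniformbound} while $\int_{D(0,1)}\widetilde V^{+}|\bx|^{-\beta-\delta}\,\dd\bx=2\pi\int_0^1 r^{n+1-\beta-\delta}V\,\dd r<\infty$ once $\delta<n-\beta+2$ --- which is permitted precisely because $n>\beta-2$. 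Then Theorem~\ref{thm:maintheorem}(1) if $\beta>0$, and Theorem~\ref{thm:maintheorem}(2) if $\beta<0$ (where $\widetilde V\ge0$ is what is needed), produces a radial $\psi\in H^1_{\loc}(\R^2)\cap C^1(\R^2\setminus0)$ satisfying \eqref{eq:equationexistencepositvedecay}. For H\"older continuity: when $\beta>0$ this is Proposition~\ref{prop:regularity} applied to $\widetilde V$ (exponent $\alpha<\min(\beta+\delta,1)$); when $\beta<0$ (so $n>-2>\beta-2$ is automatic) I would argue near the origin by hand, using that $r\psi'(r)=-2\beta\int_{D(0,r)}|\bx|^{n}Ve^{\psi}\ge0$ makes $\psi$ increasing, so by Fubini $\int_0^1\psi'(s)\,\dd s=-4\pi\beta\int_0^1 t^{n+1}V(t)e^{\psi(t)}\log(1/t)\,\dd t<\infty$ (finite since $V,e^{\psi}$ are bounded near $0$ and $n+1>-1$); hence $\psi$ is bounded near $0$, so $|\bx|^{n}Ve^{\psi}\in L^p_{\loc}$ for some $p>1$ and $\psi\in C^{0,\alpha}_{\loc}(\R^2)$ by elliptic regularity.

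\emph{The index identity via Pokhozhaev (second part).} Given any $\psi\in L^1_{\loc}(\R^2)$ solving \eqref{eq:equationexistencepositvedecay}, I would first upgrade its regularity: $g:=4\pi\beta|\bx|^{n}Ve^{\psi}\in L^1(\R^2)$, so by a Brezis--Merle estimate $e^{\psi}\in L^q_{\loc}$ for all $q$, hence $g\in L^p_{\loc}$ for some $p>1$ and $\psi\in C^{0,\alpha}_{\loc}(\R^2)\cap C^2(\R^2\setminus0)$. Since $\int_{\R^2}g=4\pi\beta$ and any harmonic function on $\R^2$ that is $o(\log|\bx|)$ is constant, the Newtonian representation $\psi(\bx)=-\tfrac1{2\pi}\int_{\R^2}\log|\bx-\by|\,g(\by)\,\dd\by+\textup{const}$ holds (using the decay of $V$), whence $\nabla\psi(\bx)=-2\beta\,\bx/|\bx|^2+o(1/|\bx|)$ as $|\bx|\to\infty$. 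Then I would run Pokhozhaev: multiplying $-\Delta\psi=We^{\psi}$ ($W:=4\pi\beta|\bx|^{n}V$) by $\bx\cdot\nabla\psi$, integrating over $D(0,R)$ (first over $D(0,R)\setminus D(0,\eps)$ and letting $\eps\to0$ if $\psi$ is singular at $0$), and using the two-dimensional Rellich identity on the left and $\bx\cdot\nabla\psi\,e^{\psi}=\bx\cdot\nabla(e^{\psi})$, $\div(W\bx)=4\pi\beta(n+2)|\bx|^{n}V+4\pi\beta|\bx|^{n}(\bx\cdot\nabla V)$ on the right, gives
\begin{align*}
\frac{R}{2}\int_{\partial D(0,R)}\bigl(|\partial_{\tau}\psi|^2-|\partial_{\nu}\psi|^2\bigr)\,\dd S
&=R\int_{\partial D(0,R)}We^{\psi}\,\dd S-4\pi\beta(n+2)\int_{D(0,R)}|\bx|^{n}Ve^{\psi}\,\dd\bx\\
&\quad-4\pi\beta\int_{D(0,R)}|\bx|^{n}(\bx\cdot\nabla V)e^{\psi}\,\dd\bx.
\end{align*}
Letting $R\to\infty$ along a sequence $R_k$ with $R_k\int_{\partial D(0,R_k)}|\bx|^{n}Ve^{\psi}\,\dd S\to0$ (such a sequence exists since $\int_{\R^2}|\bx|^{n}Ve^{\psi}<\infty$), the first right-hand term vanishes and the second tends to $-4\pi\beta(n+2)$; writing $|\bx|^{n}(\bx\cdot\nabla V)=|\bx|^{n+1}V'$ and integrating by parts in $r$ with $e^{\psi}\le|\bx|^{-2\beta+\eta}$ (any $\eta\in(0,\delta)$) and \eqref{eq:uniformbound2} shows $\int_{\R^2}|\bx|^{n}|\bx\cdot\nabla V|e^{\psi}<\infty$, so the third tends to $-4\pi\beta\int_{\R^2}|\bx|^{n}(\bx\cdot\nabla V)e^{\psi}$. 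Crucially, by the gradient asymptotics $|\partial_{\nu}\psi|^2=4\beta^2/R^2+o(1/R^2)$ and $|\partial_{\tau}\psi|^2=o(1/R^2)$ on $\partial D(0,R)$, so the \emph{left}-hand side tends to $-4\pi\beta^2$, not to $0$. Equating the limits and dividing by $-4\pi\beta\ne0$ gives $\beta=(n+2)+\int_{\R^2}|\bx|^{n}(\bx\cdot\nabla V)e^{\psi}$, i.e.\ \eqref{eq:indexformula}; and since $V$ is radially decreasing, $\bx\cdot\nabla V=rV'(r)\le0$ a.e.\ with strict inequality on a set of positive measure, the integral is negative and hence $n>\beta-2$.

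\emph{The main obstacle.} The verification of the hypotheses of Theorem~\ref{thm:maintheorem} and the regularity bootstrap are routine. The real work is the limiting step in Pokhozhaev's identity: establishing the \emph{sharp} $o(1/|\bx|)$ decay of $\nabla\psi$ for a possibly non-radial $\psi$ (so that the left boundary term has the exact limit $-4\pi\beta^2$ rather than merely being $O(1)$), selecting the sequence $R_k$, and proving the absolute convergence of $\int_{\R^2}|\bx|^{n}(\bx\cdot\nabla V)e^{\psi}$ out of \eqref{eq:uniformbound2} and $\psi/\log|\bx|\to-2\beta$; the possible singularity of $|\bx|^{n}$ at the origin also forces the $\eps\to0$ interior-boundary argument. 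For radial $\psi$ these difficulties evaporate, since then $r\psi'(r)\to-2\beta$ directly from the equation and $\partial_{\tau}\psi\equiv0$.
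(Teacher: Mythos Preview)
Your proposal is correct and follows essentially the same route as the paper's proof: existence via Theorem~\ref{thm:maintheorem} applied to $\widetilde V=|\cdot|^{n}V$, regularity from Proposition~\ref{prop:regularity}, and the index formula \eqref{eq:indexformula} from a Pokhozhaev identity after upgrading $\psi$ to the Newtonian representation and establishing the sharp gradient asymptotics. Two minor differences worth noting: (i) the paper shows $R\int_{\partial D(0,R)}|\bx|^{n}Ve^{\psi}\to0$ and $r\partial_{r}\psi\to-2\beta$, $\partial_{\theta}\psi\to0$ along \emph{every} $R\to\infty$ (not just a subsequence) by a careful near/far splitting $I_{1}+I_{2}$ of the kernel integral that exploits the monotonicity of $V$ and \eqref{eq:uniformbound2}; your subsequence trick is sufficient for the identity, but the full limit is what justifies taking the limit on the left-hand side without ambiguity. (ii) Your separate H\"older argument for $\beta<0$ is unnecessary: Proposition~\ref{prop:regularity} only needs $|\widetilde V(\bx)|\,|\bx|^{-\delta}\in L^{1}_{\loc}$ for some $\delta>0$, which follows from $n>-2$ regardless of the sign of $\beta$.
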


\begin{remark}
    We note that, by Remark \ref{rem:assymtoticradialsol}, every radial solution of \eqref{eq:mainequation} satisfies 
    \begin{align*}
         \lim_{\bx \to \infty} \frac{\psi(\bx)}{\log |\bx|} = -2\beta.
    \end{align*}
    However, the other direction does not necessarily hold; see Example \ref{ex:keyexample}.
\end{remark}
For the second result, for negative $\beta$ and general $V$ (not necessarily radially symmetric), we prove the following:

\begin{theorem}
\label{thm:strongestnegativebeta}
Let $\beta<0,$ and $V \in L^1_{\loc}(\R^2)$ be a non-negative function such that $V \geq C_1$ a.e. in a disk $D \subset \R^2$ for $C_1>0$ and $\alpha(V)>0$. If $\beta \geq -\alpha(V)$, then
there exists a solution $\psi_{\beta} \in L^{1}_{\loc}(\R^2)$ to
\begin{equation*}
\begin{aligned}
  &  -\Delta \psi_{\beta} =4\pi \beta V e^{\psi_{\beta}},\quad \textup{weakly in } \R^2,\\
  &  \int_{\R^2} V e^{\psi_{\beta}} =1.
\end{aligned}
\end{equation*}
Moreover, 
\begin{align}
    \label{eq:betadecreasing}
    \psi_{\beta_2} + \log|\beta_2| \leq \psi_{\beta_1} + \log|\beta_1|, \quad \textup{a.e. in } \R^2,
\end{align}
for every $\beta_2 \geq \beta_1 \geq -\alpha(V)$, and
\begin{equation}
 \begin{aligned}
\label{eq:assytheosolnegatigener}
 \psi_{\beta }(\bx) \leq -2\beta \log (|\bx|+1)+ \log |\beta| +C_2,\quad \textup{ for a.e. } \bx \in \R^2,
 \end{aligned}
 \end{equation}
 for every $\beta \geq -\alpha(V)$, where $C_2$ is a constant depending on $C_1,D$.
\end{theorem}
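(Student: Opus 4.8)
The plan is to reduce \eqref{eq:mainequation} (for $\beta<0$) to a semilinear equation with integrable data by removing both the weight and the logarithmic growth, solve that by the direct method of the calculus of variations, and then read off the monotonicity \eqref{eq:betadecreasing} and the pointwise bound \eqref{eq:assytheosolnegatigener} from comparison and potential arguments. Concretely, I would write $\psi_\beta = w - \psi_0 - \log|\beta|$ with $\psi_0 = \psi_0^{(\beta)}$ the fixed smooth function of the introduction (so $\psi_0(\bx) = 2\beta\log|\bx|$ for $|\bx|>1$), set $f := \Delta\psi_0 \in C_0^\infty(\R^2)$ (supported in $\overline{D(0,1)}$, with $\int_{\R^2} f = 4\pi\beta$), and set $\tilde V := V e^{-\psi_0}\ge 0$. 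Since $e^{-\psi_0}(\bx) = |\bx|^{2|\beta|}$ for $|\bx|>1$, the hypothesis $\beta\ge -\alpha(V)$, i.e.\ $|\beta|\le\alpha(V)$, gives $\tilde V\in L^1(\R^2)$ when $|\beta|<\alpha(V)$ (the endpoint is recovered at the end). In these variables \eqref{eq:mainequation} is equivalent to
$$\Delta w = f + 4\pi\tilde V e^{w}\ \ \text{in }\R^2,\qquad \int_{\R^2}\tilde V e^{w} = |\beta|,$$
and the normalisation $\int_{\R^2} V e^{\psi_\beta}=1$ is exactly $\int_{\R^2}\tilde V e^{w} = |\beta|$.

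For existence I would minimise $I[w] := \tfrac12\int_{\R^2}|\nabla w|^2 + \int_{\R^2} f w + 4\pi\int_{\R^2}\tilde V e^{w}$ over $\{\,w:\nabla w\in L^2(\R^2)\,\}$, modulo constants (note $I[0] = 4\pi\|\tilde V\|_{L^1}<\infty$); its Euler--Lagrange equation is $\Delta w = f + 4\pi\tilde V e^{w}$. The key structural point is that here the exponential term is a \emph{penalisation}, $\tilde V e^{w}\ge 0$, so — unlike for $\beta>0$ — no Moser--Trudinger inequality and no restriction such as $|\beta|<2$ is needed. Using that $f$ has compact support with $\int f<0$, Poincar\'e's inequality to control $\int fw$ on $\supp f$, and the elementary bound $\int_{\R^2}\tilde V e^{w}\ge (\inf_D\tilde V)\,|D|\,e^{\bar w}$ (Jensen, with $\inf_D\tilde V>0$ since $V\ge C_1$ on $D$), one finds $I[w]\ge \tfrac12\|\nabla w\|_2^2 - C\|\nabla w\|_2 + \big(4\pi\beta\,\bar w + c\,e^{\bar w}\big)$ with $c>0$; the last bracket is bounded below and tends to $+\infty$ as $|\bar w|\to\infty$, so $I$ is coercive and bounded below. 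Weak lower semicontinuity of the Dirichlet term, continuity of the linear term, and Fatou's lemma for $\int\tilde V e^{w}$ produce a minimiser $w_\beta$ with $\tilde V e^{w_\beta}\in L^1$. Since $\Delta w_\beta\in L^1(\R^2)$ and $\nabla w_\beta\in L^2(\R^2)$ one has $\int_{\R^2}\Delta w_\beta = 0$ (choose $R_k\to\infty$ with $R_k\int_{\partial D(0,R_k)}|\nabla w_\beta|^2\to 0$, possible because $\int_{\R^2}|\nabla w_\beta|^2<\infty$, so that $\int_{\partial D(0,R_k)}\partial_\nu w_\beta\to 0$), whence $4\pi\int\tilde V e^{w_\beta} = -\int f = 4\pi|\beta|$: the normalisation holds automatically. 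Undoing the substitution, $\psi_\beta := w_\beta-\psi_0-\log|\beta|$ solves \eqref{eq:mainequation}.

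Put $v_\beta := \psi_\beta + \log|\beta| = w_\beta-\psi_0$, so $\Delta v_\beta = 4\pi V e^{v_\beta}$, $\int_{\R^2} V e^{v_\beta} = |\beta|$, and from $v_\beta = -a_\beta + 2\int\log|\bx-\by|\,V e^{v_\beta}(\by)\,\dd\by$ (constant $a_\beta$) one gets $v_\beta(\bx) = 2|\beta|\log|\bx| + O(1)$ off a polar set. For the monotonicity fix $\beta_2>\beta_1$, so $|\beta_2|<|\beta_1|$, and let $\eta := v_{\beta_2}-v_{\beta_1}$. Kato's inequality gives $\Delta\eta^+ \ge \1_{\{\eta>0\}}\Delta\eta = \1_{\{v_{\beta_2}>v_{\beta_1}\}}4\pi V(e^{v_{\beta_2}}-e^{v_{\beta_1}})\ge 0$, so $\eta^+$ equals a.e.\ a nonnegative subharmonic function; since $\eta = 2(|\beta_2|-|\beta_1|)\log|\bx|+O(1)\to-\infty$ it vanishes a.e.\ outside a ball, hence (sub-mean-value) has compact support, hence is constant, hence $\equiv 0$: thus $v_{\beta_2}\le v_{\beta_1}$ a.e., which is \eqref{eq:betadecreasing}. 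For the upper bound, $\int_D e^{v_\beta}\le C_1^{-1}\int_D V e^{v_\beta}\le C_1^{-1}|\beta|$ and subharmonicity of $v_\beta$ give, via sub-mean-value and Jensen, $v_\beta(\bx_*)\le \log(|\beta|/(C_1|D|))$ at the centre $\bx_*$ of $D$, i.e.\ $\psi_\beta(\bx_*)\le -\log(C_1|D|)$; propagating this by the potential representation, the inequality $\log|\bx-\by|\le \log(|\bx|+1)+\log(|\by|+1)$ and $\int V e^{v_\beta}=|\beta|$ yields $v_\beta(\bx)\le -2\beta\log(|\bx|+1) + a_\beta\text{-terms} + 2\int\log(|\by|+1)Ve^{v_\beta}$, and eliminating $a_\beta$ at $\bx_*$ while bounding the weighted tail by a bootstrap using the slack $|\beta|<\alpha(V)$ (so $\log(|\by|+1)(|\by|+1)^{2|\beta|}V\in L^1$) gives \eqref{eq:assytheosolnegatigener}. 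The endpoint $\beta=-\alpha(V)$ follows from the family $\{v_\beta\}_{-\alpha(V)<\beta<0}$: by \eqref{eq:betadecreasing} it is monotone as $\beta\downarrow-\alpha(V)$, the bound just proved keeps it locally bounded, and the monotone limit solves $\Delta v = 4\pi V e^{v}$ with $\int V e^{v}=\alpha(V)$, giving $\psi_{-\alpha(V)}$ after rescaling.

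The content is analytic rather than conceptual, and the main obstacle is the variational step on all of $\R^2$: because $f$ has nonzero mean one must separate and control the additive-constant mode of $w$ from its gradient, and on $\R^2$ — where $\nabla w\in L^2$ does not by itself confine $w$ — both the coercivity estimate and the extraction of the minimiser require care. The remaining difficulties are the low regularity ($v_\beta\in L^1_{\loc}$ only) in the comparison and potential arguments, handled through the distributional Laplacian, Kato's inequality and truncations together with the fact that $v_\beta = 2|\beta|\log|\bx|+O(1)$ off a polar set; the automatic-normalisation identity $\int_{\R^2}\Delta w_\beta = 0$; and, for \eqref{eq:assytheosolnegatigener}, tracking that the residual constant depends only on $C_1$ and $D$ while controlling the weighted tail integral uniformly enough to reach the endpoint $\beta=-\alpha(V)$.
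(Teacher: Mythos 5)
Your existence argument takes a genuinely different route from the paper's. The paper mollifies $V$, builds an explicit supersolution from Theorem \ref{thm:maintheorem} applied to a fixed bump $\widetilde{W}_1\le C_1$ supported in $D$ and an explicit subsolution as a logarithmic potential of $V_\varepsilon(\by)(|\by|+1)^{-2\beta}$, and then invokes Perron's method via \cite[Thm.~2.10]{N-82}, reading off the normalisation from Lemma \ref{lem:degreeformula}. You instead minimise $I[w]=\tfrac12\int|\nabla w|^2+\int fw+4\pi\int \widetilde V e^{w}$ directly on $\{\nabla w\in L^2(\R^2)\}$ and recover $\int\widetilde Ve^{w_\beta}=|\beta|$ from the flux identity $\int_{\R^2}\Delta w_\beta=0$ (your $\liminf_{r\to\infty}r\int_{\partial D(0,r)}|\nabla w_\beta|^2=0$ argument is correct). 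This is constructive, avoids regularising $V$ and the appeal to \cite{N-82}, and the coercivity analysis (Poincar\'e to tie the average over $\supp f$ to the average over $D$, Jensen for $\int_D\widetilde Ve^{w}\ge c\,e^{\bar w_D}$) is sound for $|\beta|<\alpha(V)$. One caveat in the Kato step: for non-radial $V\in L^1_{\loc}$ you do not have pointwise asymptotics $v_\beta=2|\beta|\log|\bx|+O(1)$ — only the circular averages of $w_\beta$ are controlled (by $O(\sqrt{\log R}\,\|\nabla w_\beta\|_{L^2})$), and the $O(1)$ would require the log-moment $\int\log(|\by|+1)Ve^{v_\beta}<\infty$, which is what you are trying to prove. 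The conclusion that $\eta^+$ vanishes near infinity is still reachable, but via the sub-mean-value property of $\eta^+$ combined with Poincar\'e on annuli applied to $w_{\beta_2}-w_{\beta_1}$, not via a pointwise limit $\eta\to-\infty$.

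The genuine gap is the bound \eqref{eq:assytheosolnegatigener}. Your route produces $v_\beta(\bx)\le -2\beta\log(|\bx|+1)+a_\beta+2\int\log(|\by|+1)Ve^{v_\beta}\,\dd\by$, and there are two problems. First, eliminating $a_\beta$ at the single point $\bx_*$ needs a lower bound on the potential there, i.e.\ control of $\int\log^-|\bx_*-\by|\,Ve^{v_\beta}\,\dd\by$, which is not controlled by $\int Ve^{v_\beta}=|\beta|$ alone since the measure may concentrate near $\bx_*$ (averaging over $D$ instead of evaluating at $\bx_*$ repairs this). Second, and decisively, the weighted tail $\int\log(|\by|+1)Ve^{v_\beta}\,\dd\by$ is governed by the behaviour of $V$ at infinity: even if your bootstrap closes, it yields a bound in terms of $\int_{\R^2\setminus D(0,1)}V(\by)|\by|^{2|\beta|}\log(|\by|+1)\,\dd\by$, so the resulting constant depends on the tail of $V$ and degenerates as $\beta\downarrow-\alpha(V)$ — it is not the $C_2=C_2(C_1,D)$ the theorem asserts, and its blow-up would also undermine the local boundedness you invoke to pass to the endpoint. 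The missing idea is the one the paper uses: compare $v_\beta$ with the explicit solution $\widetilde\psi_1$ associated to a fixed bump $\widetilde W_1\le\min(C_1,V)$ supported in $D$, for which Lemma \ref{lem:degreeformula} gives two-sided logarithmic asymptotics with constants depending only on $C_1$ and $D$ because $\widetilde W_1e^{\widetilde\psi_1}$ has compact support; the comparison itself can be run with exactly your Kato machinery, since $Ve^{v_\beta}-\widetilde W_1e^{v^*}\ge\widetilde W_1(e^{v_\beta}-e^{v^*})\ge0$ on $\{v_\beta>v^*\}$. Without some such localisation of the constant, the statement as written is not proved.
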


\begin{remark}
   As far as we know, this is the sharpest result so far for $\beta<0$ and generalizes the previous results in \cite{CK-00,CL-97,CL-98,CL-99,CL-00,CN-91,CN-91(2),CYZ-20,M-84,N-82(2),WuZ-24}, where $V$ is assumed to be at least bounded locally.

\end{remark}

\begin{remark}
   The proof of Theorem \ref{thm:strongestnegativebeta} uses the existence result of \cite[Thm. 2.10]{N-82}, which relies on Perron's method. This approach, unlike the proof of Theorem \ref{thm:maintheorem}, is non-constructive.
\end{remark}

\subsection{Uniqueness}
For the uniqueness of the solutions to \eqref{eq:mainequation}, we have two results regarding $\beta<0$ and $\beta>0$.

For $\beta <0$, we prove the following generalized comparison principle whereby we derive the uniqueness of solutions to \eqref{eq:mainequation} immediately if $F_1(x,r)=F_2(x,r)=-4\pi \beta\, V(x)\, e^r$.
\begin{theorem}
\label{thm:comparison}
    Let $F_1,F_2 \in C(\R^2 \times \R)$ be non-negative functions such that 
    \begin{align}
    \label{eq:comparisonassum}
        F_2(\bx,r_2) \geq F_1(\bx,r_1),    \end{align}
    for every $\bx \in \R^n, r_1,r_2 \in \R$ satisfying $r_2 \geq r_1$. Assume that 
$\psi_1,\psi_2 \in C^2(\R^2)$ satisfy
    \begin{align}
    \label{eq:equationscomparison}
 -\Delta \psi_2(\bx) +F_2(\bx,\psi_2(\bx)) \leq -\Delta \psi_1(\bx) + F_1(\bx,\psi_1(\bx)), \quad \textup{for every } \bx \in \R^2,
    \end{align} and
\begin{align}
\label{eq:aymptoticcondition}
   \liminf_{\bx \to \infty} \frac{\psi_1(\bx)-\psi_2(\bx)}{\log|\bx|}\geq 0.
\end{align}
 Then, either
\begin{align*}
    \psi_1 \geq \psi_2, \quad \textup{in } \R^2,
\end{align*}
or $\psi_2 >\psi_1$ in $\R^2$, $\psi_2- \psi_1$ is a constant, and
 \begin{align*}
    F_1(\bx,\psi_1(\bx)) = F_2(\bx,\psi_2(\bx)), \quad \bx \in \R^2.
\end{align*}
\end{theorem}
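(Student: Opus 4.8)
The plan is to argue by the maximum principle applied to the difference $w := \psi_1 - \psi_2 \in C^2(\R^2)$. Subtracting the two differential inequalities in \eqref{eq:equationscomparison}, I get
\begin{align*}
    -\Delta w(\bx) \geq F_2(\bx,\psi_2(\bx)) - F_1(\bx,\psi_1(\bx)), \quad \bx \in \R^2.
\end{align*}
The key point is that, by the monotonicity hypothesis \eqref{eq:comparisonassum}, at any point $\bx$ where $w(\bx) < 0$, i.e. $\psi_1(\bx) < \psi_2(\bx)$, we may apply \eqref{eq:comparisonassum} with $r_1 = \psi_1(\bx) \le \psi_2(\bx) = r_2$ to conclude $F_2(\bx,\psi_2(\bx)) \geq F_1(\bx,\psi_1(\bx))$, hence $-\Delta w(\bx) \geq 0$ on the open set $\Omega := \{w < 0\}$. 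So $w$ is superharmonic on $\Omega$.

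The strategy is then a contradiction argument: suppose $\psi_1 \geq \psi_2$ fails, so $\Omega \neq \emptyset$, and show this forces the rigid alternative. First I would rule out that $\inf_{\R^2} w$ is attained at an interior point of $\Omega$ with a negative value unless $w$ is locally constant, via the strong minimum principle for the superharmonic function $w$ on $\Omega$: if $w$ attains its (negative) infimum over $\Omega$ at some $\bx_0 \in \Omega$, then $w$ is constant on the connected component of $\Omega$ containing $\bx_0$; but $\Omega$ is open, so that component has no boundary inside $\R^2$ unless it is all of $\R^2$, which would make $w$ a negative constant everywhere. The more delicate case is when the infimum of $w$ over $\Omega$ is not attained, which is where the asymptotic condition \eqref{eq:aymptoticcondition} enters. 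Condition \eqref{eq:aymptoticcondition} says $w(\bx) \geq -\eps \log|\bx|$ for $|\bx|$ large, for any $\eps > 0$; I want to combine this with superharmonicity to preclude $w$ dipping to a negative value "at infinity." Concretely, I would fix a point $\bx_*$ with $w(\bx_*) < 0$, choose $\eps > 0$ small, and compare $w$ on a large annulus $D(0,R) \setminus D(0,\rho)$ (with $\rho$ chosen so that $\overline{D(0,\rho)}$ contains $\bx_*$ and... actually simpler: on $D(0,R)$) against the harmonic function $h_R$ which on $\partial D(0,R)$ agrees with the (uniform over $R$, by \eqref{eq:aymptoticcondition}) lower bound $-\eps \log R + O(1)$ and is finite at the origin — but $w$ need not be superharmonic on all of $D(0,R)$, only where it is negative. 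To handle this cleanly I would instead work with $v := \min(w,0) \le 0$, which is superharmonic on all of $\R^2$ (it is superharmonic on $\{w<0\}$ by the above, superharmonic trivially where it is the constant $0$ being a local max of $\min(\cdot,0)$... I must check the gluing: $\min$ of two superharmonic functions is superharmonic, and both $w$ on $\Omega$—no, $w$ is only superharmonic on $\Omega$). Cleanest: on $\R^2$, $v = \min(w,0)$; for a mean-value / sub-mean-value check at $\bx$, if $w(\bx)\ge 0$ then $v(\bx)=0\ge \barint_{\partial D(\bx,r)} v$ automatically since $v\le 0$; if $w(\bx)<0$ then near $\bx$, $w$ is superharmonic, and $v=\min(w,0)$ is a min of two superharmonic functions ($w$ locally, and $0$), hence superharmonic near $\bx$. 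So $v$ is superharmonic on $\R^2$. Now $v \le 0$ and $v(\bx) \geq w(\bx) \geq -\eps\log|\bx| + C_\eps$ for $|\bx|\ge 1$, so $\liminf_{\bx\to\infty} v(\bx)/\log|\bx| \ge 0$. A superharmonic function on $\R^2$ bounded above by $0$ and with $v(\bx)/\log|\bx| \to$ something $\ge 0$ (i.e. $v(\bx) = o(\log|\bx|)$ from below and $\le 0$ from above) must be constant: indeed by the minimum principle on $D(0,R)$, $\min_{\overline{D(0,R)}} v = \min_{\partial D(0,R)} v \geq -\eps \log R + C_\eps$; if $v(\bx_*) = -c < 0$ this gives $-c \geq -\eps\log R + C_\eps$ for all large $R$ — no contradiction yet. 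So I need the two-sided control: combine with the fact that $v$ is bounded above by $0$. Use the comparison $v(\bx) \geq $ harmonic function in the annulus $1 \le |\bx| \le R$ with boundary values $v|_{|\bx|=1} \ge \inf_{|\bx|=1} v =: -M$ and $v|_{|\bx|=R} \ge -\eps\log R + C_\eps$; the harmonic function in the annulus with those boundary values is a combination of $1$ and $\log|\bx|$, and evaluating at fixed $\bx_*$ and letting $R \to \infty$: the coefficient of $\log$ forces, since $v\le 0$ everywhere hence the $R\to\infty$ limit stays $\le 0$ but the annulus-harmonic lower bound at $\bx_*$ tends to $\ge -M \cdot(\text{something})$... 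This pins $v(\bx_*)$ between two bounds that, as $\eps\to 0$, squeeze it. Let me not belabor: the upshot I will prove is that $v$ is constant, hence $w \le$ const $< 0$ everywhere on $\R^2$, so $\Omega = \R^2$ and $w$ is a negative constant.

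Given $w = \psi_1 - \psi_2 \equiv -c$ with $c > 0$, we have $\Delta w = 0$, so \eqref{eq:equationscomparison} forces $F_2(\bx,\psi_2(\bx)) \leq F_1(\bx,\psi_1(\bx))$ for all $\bx$; combined with the reverse inequality $F_2(\bx,\psi_2(\bx)) \geq F_1(\bx,\psi_1(\bx))$ from \eqref{eq:comparisonassum} (valid since $\psi_1 < \psi_2$ now everywhere), we get $F_1(\bx,\psi_1(\bx)) = F_2(\bx,\psi_2(\bx))$ for all $\bx \in \R^2$. This is exactly the rigid second alternative: $\psi_2 > \psi_1$, $\psi_2 - \psi_1 = c$ is a positive constant, and the fluxes agree. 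Thus either $\Omega = \emptyset$, giving $\psi_1 \geq \psi_2$, or we land in the rigidity case, completing the dichotomy.

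\textbf{Main obstacle.} The routine part is the local strong minimum principle; the delicate part is the Liouville-type argument at infinity — showing that a superharmonic $v \le 0$ on $\R^2$ with $\liminf_{\bx\to\infty} v(\bx)/\log|\bx| \ge 0$ is constant. This requires the annulus comparison with the fundamental solution $\log|\bx|$ and a careful passage $R \to \infty$ (and then $\eps \to 0$), exploiting that in dimension $2$ the harmonic functions on an annulus are spanned by $1$ and $\log|\bx|$, so the growth hypothesis \eqref{eq:aymptoticcondition} exactly kills the logarithmic mode. One must be slightly careful that \eqref{eq:aymptoticcondition} is only a $\liminf$, so the lower bound $v \ge -\eps\log|\bx| + C_\eps$ holds only outside a large ball depending on $\eps$, which is fine for the annulus argument with inner radius chosen accordingly.
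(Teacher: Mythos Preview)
Your approach is correct and takes a genuinely different route from the paper's. The paper proceeds by case analysis on the sign of $\psi_1(0)-\psi_2(0)$: in the case $\psi_1(0)>\psi_2(0)$ it rescales so that $\psi_1>\psi_2$ on $\overline{D(0,2)}$ and then observes that the quotient $(\psi_1-\psi_2)/\log(|\bx|-1)$ must attain a negative minimum at some finite $\bx_m$, where a direct second-derivative computation combined with \eqref{eq:comparisonassum} and \eqref{eq:equationscomparison} yields a contradiction; the equality case $\psi_1(0)=\psi_2(0)$ is handled by perturbation, and a translation trick reduces the remaining case to $\psi_1<\psi_2$ everywhere, after which a barrier $\varepsilon\log(|\bx-\bx_0|-\delta)$ forces $\psi_2-\psi_1$ to be constant. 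Your argument is more global: you pass to $v=\min(w,0)$, which is superharmonic on all of $\R^2$ (your mean-value verification is valid), nonpositive, and satisfies $\liminf v/\log|\bx|\ge 0$, and a single Liouville-type lemma (any such $v$ is constant) delivers the full dichotomy without case splits. That Liouville lemma is true; besides the annulus comparison you sketch, a clean way to see it is via the circular average $A(r)=\barint_{\partial D(0,r)}v$, which satisfies $A'(r)=-\mu(D(0,r))/(2\pi r)$ for the Riesz measure $\mu=-\Delta v\ge 0$: if $\mu\ne 0$ then $A(r)\le C-\tfrac{c}{2\pi}\log r$, contradicting $A(r)\ge -\varepsilon\log r$ for every $\varepsilon>0$, so $v$ is harmonic, $\le 0$, hence constant. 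Your route is more conceptual and case-free; the paper's route stays within $C^2$ functions and avoids the mild care needed for the only-Lipschitz function $v=\min(w,0)$. One small slip: you write ``$v(\bx)\ge w(\bx)$'' when passing to the lower bound on $v$; in fact $v\le w$, but the desired bound $v\ge -\varepsilon\log|\bx|+C_\varepsilon$ still holds (where $w\ge 0$ one has $v=0$, and where $w<0$ one has $v=w\ge -\varepsilon\log|\bx|+C_\varepsilon$).
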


\begin{remark} An immediate conclusion of Theorem \ref{thm:comparison} is the uniqueness of solutions if $F_1 = F_2$, \eqref{eq:equationscomparison} is an equality, and 
\begin{align*}
    \lim_{x \to \infty} \frac{\psi_1(\bx)-\psi_2(\bx)}{\log|\bx|} = 0.
\end{align*}
Note that, for $F_1=F_2$, Theorem \ref{thm:comparison} is a well-known result if we replace $\R^2$ with bounded open subsets of $\R^2$; see \cite[Thm. 3.3]{CIL-92}. However, for unbounded domains such as $\R^2$,
 the asymptotic behavior 
 \begin{align}
\label{eq:assymptoticassump}
    \lim_{\bx \to \infty} \frac{\psi(\bx)}{\log|\bx|}= -2\beta,
\end{align}
 becomes essential. In fact, there might be infinitely many solutions to \eqref{eq:mainequation}, which have different asymptotic behaviors if $V$ decays fast enough; see Example \ref{ex:keyexample}. We mention that also if $F_1(\bx,r)$ has a continuous derivative in $r$ and 
\begin{align}
\label{eq:strongerassumcompa}
    \liminf_{x \to \infty} \frac{\psi_1(\bx)-\psi_2(\bx)}{\log|\bx|} >0,
\end{align}
 then one can use the well-known maximum principle to prove Theorem \ref{thm:comparison}. To see this, let $\Omega \subset \R^2$ be a bounded open subset, satisfying $\psi_1 > \psi_2$ on $\partial \Omega$, and define the functions $\psi := \psi_1 -\psi_2$ and $$\phi(\bx) := \frac{F_1(\bx,\psi_1(\bx)) -F_1(\bx,\psi_2(\bx))}{\psi_1(\bx)-\psi_2(\bx)}, \quad \textup{for every } \bx \in \R^2.$$ Then, by \eqref{eq:comparisonassum} and \eqref{eq:equationscomparison}, we have 
\begin{align*}
-\Delta  \psi + \phi \psi \geq 0, \quad \textup{in } \Omega,
\end{align*}
and
\begin{align*}
    \phi(\bx)\geq 0, \quad \textup{for every } \bx \in \R^2.
\end{align*}
 Hence, we can apply the maximum principle; see \cite[Ch. 6.4.1, Thm. 2; Ch. 6.4.2, Thm. 4]{E-10}, to get $\psi >0$ in $\Omega$. By \eqref{eq:strongerassumcompa}, we can take $\Omega$ large enough to derive that $\psi_1 >\psi_2$ in $\R^2.$
\end{remark}

\begin{remark}
  Let $V_1,V_2 \in C(\R^2) \setminus 0$ such that $V_2 \leq V_1 \leq 0$. Assume that $\psi_1,\psi_2 \in C^2(\R^2)$ are solutions to
    \begin{align*}
        -\Delta \psi_i =  V_i e^{\psi_i}, \quad \textup{in } \R^2,
    \end{align*}
for $i=1,2$ and satisfy
\begin{align*}
   \liminf_{\bx \to \infty} \frac{\psi_1(\bx)-\psi_2(\bx)}{\log|\bx|}\geq 0.
\end{align*}
 Then, by using Theorem \ref{thm:comparison} for $F_i(\bx,\psi_i) =- V_i(\bx) e^{\psi_i},$ we have either
\begin{align}
\label{eq:comparingtwosol}
    \psi_1 \geq \psi_2, \quad \textup{in } \R^2,
\end{align}
or $\psi_2 >\psi_1$ in $\R^2$, $\psi_2- \psi_1=C$ is a constant, and
 \begin{align*}
   V_2= V_1 e^{-C}, \quad \bx \in \R^2,
\end{align*}
where the second case is in contradiction with $V_1 \geq V_2$ and $V_1,V_2 \in C(\R^2) \setminus 0$. Hence, the only possibility is \eqref{eq:comparingtwosol}. 
\end{remark}

 Finally, for $\beta>0$, we prove the following theorem.
\begin{theorem}
\label{thm:uniquenessposistivbeta}
     Let $\beta>0$, $n \geq 0$, and $V \in C(\R^2) \cap C^1(\R^2 \setminus 0)$ be a non-negative and radially decreasing function. Assume that $r V'(r) \in C([0,\infty))$,
   \begin{align}
    \label{eq:uniformpointwisebound}
\sup_{r \geq 0} r^{n+2+\delta} V(r) < \infty, 
\end{align}
for some $\delta>0,$ and for every constant $0<c<n+2$ there exists $r_c >0$ satisfying 
\begin{equation}
\label{eq:unqiuenesscondition}
\begin{aligned}
c V(r) + r V'(r) \geq 0, \quad r \leq r_c,\\
c V(r) + r V'(r) \leq 0,\quad r \geq r_c.
\end{aligned}
\end{equation}
 Then, there exists at most one radially symmetric $\psi \in C^2(\R^2)$ such that
\begin{equation}
\label{eq:radialLiouvillesol}
\begin{aligned}
   & -\Delta \psi(\bx) = 4 \pi \beta |\bx|^n V(\bx) e^{\psi(\bx)},\quad \bx \in \R^2,\\
    & \int_{\R^2} |\bx|^n V(\bx) e^{\psi(\bx)} \, \dd \bx = 1.
\end{aligned}
\end{equation}
\end{theorem}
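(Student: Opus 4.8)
The plan is to pass to the associated one–dimensional problem and then show that two solutions differing at the origin cannot both carry the prescribed mass.

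\medskip
\emph{Step 1: reduction to an ODE.} For a radially symmetric $\psi\in C^2(\R^2)$ set $t=\log|\bx|$ and $v(t):=\psi(e^t)$. Writing $P(t):=4\pi\beta\,e^{(n+2)t}V(e^t)$, the equation in \eqref{eq:radialLiouvillesol} becomes $\ddot v=-Pe^{v}$ on $\R$; since $\psi\in C^2$ is radial one has $v(-\infty)=\psi(0)\in\R$ and $\dot v(-\infty)=0$, and since $\ddot v\le 0$ the function $v$ is decreasing; from $\int_{\R^2}|\bx|^nVe^{\psi}=1$ and Remark~\ref{rem:assymtoticradialsol} one gets $\dot v(+\infty)=-2\beta$, and $\int_\R P<\infty$ by \eqref{eq:uniformpointwisebound} and the continuity of $V$. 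Conversely, for each $a\in\R$ the problem $\ddot v=-Pe^{v}$, $v(-\infty)=a$, $\dot v(-\infty)=0$ has a unique global solution $v_a$ (unique by contraction near $-\infty$ using $\int_{-\infty}P<\infty$; global because $v_a$ is decreasing, so $e^{v_a}\le e^{a}$), and every radial $C^2$ solution of \eqref{eq:radialLiouvillesol} is some $v_a$, i.e.\ is determined by $\psi(0)$. So it suffices to show: if $a_1<a_2$, the two solutions cannot both satisfy $\int_\R Pe^{v_{a_i}}\,\dd t=2\beta$.

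\medskip
\emph{Step 2: a comparison lemma.} Assume $a_1<a_2$ and both masses equal $2\beta$, and put $w:=v_{a_2}-v_{a_1}$. Then $w(-\infty)=a_2-a_1>0$, $\dot w(-\infty)=0$, and
\[
-\ddot w=Q\,w,\qquad Q:=P\,\frac{e^{v_{a_2}}-e^{v_{a_1}}}{v_{a_2}-v_{a_1}}\ge 0,\qquad Qw=P\bigl(e^{v_{a_2}}-e^{v_{a_1}}\bigr),
\]
whence $\int_\R Qw\,\dd t=2\beta-2\beta=0$. Since $Q>0$ wherever $V>0$ (a set of positive measure, as $V\not\equiv0$ whenever a solution exists), a contradiction follows once we prove $w>0$ on all of $\R$. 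For this we use the elementary Sturm/Wronskian fact: if for some $T$ there is $\bar\eta>0$ on $(-\infty,T]$ with $-\ddot{\bar\eta}\ge Q\bar\eta$ there (distributionally) and $\limsup_{t\to-\infty}\dot{\bar\eta}\le 0$, then $w>0$ on $(-\infty,T]$. Indeed, if $t_0\le T$ were the first zero of $w$, then on $(-\infty,t_0)$ the Wronskian $W:=\dot w\,\bar\eta-w\,\dot{\bar\eta}=\bar\eta^{2}\frac{\dd}{\dd t}(w/\bar\eta)$ is nondecreasing (its derivative is $-w(\ddot{\bar\eta}+Q\bar\eta)\ge 0$, and since $w>0$ there $W$ can only jump upward at concave corners of $\bar\eta$), with $W(-\infty)=-w(-\infty)\dot{\bar\eta}(-\infty)\ge 0$; hence $w/\bar\eta$ is nondecreasing and stays $\ge (a_2-a_1)/\bar\eta(-\infty)>0$, contradicting $w(t_0)=0$. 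Letting $T$ vary gives $w>0$ on $\R$.

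\medskip
\emph{Step 3: constructing the barriers — the main difficulty.} On $\{w\ge 0\}$ one has $Q\le Pe^{v_{a_2}}$ by convexity of $\exp$, and the lemma in Step~2 is only ever invoked on the interval where $w>0$; so it is enough to produce, for each $T$, a positive $\bar\eta$ on $(-\infty,T]$ with $-\ddot{\bar\eta}\ge (Pe^{v_{a_2}})\,\bar\eta$ and $\dot{\bar\eta}(-\infty)\le 0$. The key computation is
\[
-\ddot\eta_\sigma-\bigl(Pe^{v_{a_2}}\bigr)\eta_\sigma=\sigma^{-1}(G-\sigma)\,Pe^{v_{a_2}},\qquad \eta_\sigma:=1+\sigma^{-1}\dot v_{a_2},\quad G(t):=\frac{\dd}{\dd t}\log P(t)=(n+2)+\frac{rV'(r)}{V(r)}\Big|_{r=e^{t}},
\]
so $\eta_\sigma$ ($\sigma>0$) is a supersolution exactly where $G\ge\sigma$. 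Now $G\ge\sigma\iff (n+2-\sigma)V+rV'\ge 0$, so \eqref{eq:unqiuenesscondition} (with $c=n+2-\sigma\in(0,n+2)$) says precisely that $\{G\ge\sigma\}$ is a left half–line, while the monotonicity of $V$ gives $G\le n+2$ everywhere. Since $\dot v_{a_2}(-\infty)=0$ and $G(t)\to n+2$ as $t\to-\infty$, we may pick $T_0$ negative enough that $|\dot v_{a_2}(T_0)|<G(T_0)$ and then $\sigma\in(|\dot v_{a_2}(T_0)|,G(T_0))\subset(0,n+2)$; for this $\sigma$, $\eta_\sigma$ is a positive supersolution on $(-\infty,T_0]$ with $\dot\eta_\sigma(-\infty)=0$. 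On $[T_0,T]$ one has $Pe^{v_{a_2}}\le Q^\ast$ for a constant $Q^\ast$ (this quantity is continuous and decays at $+\infty$ by \eqref{eq:uniformpointwisebound}, hence bounded on $[T_0,\infty)$), and one tiles $[T_0,T]$ by finitely many rescaled cosine arcs $t\mapsto M_j\cos(\sqrt{Q^\ast}(t-c_j))$, each a positive supersolution of $-\partial_t^2-Pe^{v_{a_2}}$ on a half–period; gluing $\eta_\sigma$ to the minimum of these arcs (with amplitudes chosen to match continuously) yields the required $\bar\eta$. The main obstacle is exactly this construction — equivalently, the non-negativity of the linearized operator $-\partial_t^2-Pe^{v_{a_2}}$ on each $(-\infty,T]$ — which is what makes the barriers $\eta_\sigma$ available along a genuine left half–line; this is where \eqref{eq:unqiuenesscondition} (the single sign change of $cV+rV'$, i.e.\ the unimodality of $r^{c}V$) is indispensable, and without it the statement is false (for $V\equiv$ const, $n=0$, $\beta=2$ one has a one–parameter family of solutions, consistent with \eqref{eq:unqiuenesscondition} failing). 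Once $w>0$ on $\R$, the positivity of $Qw$ on $\{V>0\}$ forces $\int_\R Qw>0$, contradicting $\int_\R Qw=0$; hence $v_{a_1}=v_{a_2}$, which proves the uniqueness claimed.
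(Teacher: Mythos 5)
Your reduction in Step 1 and the identity $\int_{\R}Qw\,\dd t=0$ in Step 2 are fine, and your Sturm/Wronskian lemma is correct as stated. The fatal problem is that the conclusion you are trying to force in Steps 2--3, namely $w:=v_{a_2}-v_{a_1}>0$ on all of $\R$, is simply false, so no barrier construction can deliver it. Indeed, suppose $w>0$ everywhere. Since $V$ is continuous, radially decreasing, nonnegative and not identically zero (a solution carries unit mass), we have $V(0)>0$, hence $P>0$ on a left half-line. On $\{w>0\}$ one has $\ddot w=-P\bigl(e^{v_{a_2}}-e^{v_{a_1}}\bigr)\le 0$, with strict inequality wherever $P>0$; together with $\dot w(-\infty)=0$ this gives $\dot w(t_1)=-\varepsilon<0$ at some finite $t_1$, and concavity then yields $w(t)\le w(t_1)-\varepsilon(t-t_1)\to-\infty$, a contradiction. (The same argument shows that $\phi_a:=\partial_a v_a$, which solves exactly $-\ddot\phi=Pe^{v_a}\phi$ with $\phi(-\infty)=1$, $\dot\phi(-\infty)=0$, must vanish somewhere; by the standard equivalence between disconjugacy and the existence of positive supersolutions, a positive $\bar\eta$ with $-\ddot{\bar\eta}\ge Pe^{v_{a_2}}\bar\eta$ on $(-\infty,T]$ and $\dot{\bar\eta}(-\infty)\le 0$ cannot exist once $T$ passes the first zero of $\phi_{a_2}$.) The place where your construction breaks is exactly the gluing on $[T_0,T]$: matching a cosine arc continuously at its maximum to $\eta_\sigma$ produces an upward jump of the derivative (a convex corner), which destroys the distributional supersolution inequality; if instead you match log-derivatives so that all corners are concave, each successive arc survives over a shorter and shorter interval and the tiling cannot reach an arbitrary $T$. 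So the asserted ``non-negativity of $-\partial_t^2-Pe^{v_{a_2}}$ on each $(-\infty,T]$'' is not a consequence of \eqref{eq:unqiuenesscondition}; it is false.

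The genuinely hard case, which your scheme never reaches, is precisely when $w$ (equivalently $\phi_a$) changes sign once: then $\int_\R Qw=0$ is not immediately contradictory because the positive and negative contributions can cancel. This is where the paper works: it parametrizes radial solutions by $s=\psi(0)$, sets $\phi=\partial_s\psi$, and rules out the one-sign-change (and two-zero) configurations by means of the Pokhozhaev-type quantity $P(\psi)(r,s)=r\partial_r\psi\bigl(\tfrac12 r\partial_r\psi+\beta\bigr)+r^{n+2}Ve^{\psi}$, whose nonnegativity is exactly what hypothesis \eqref{eq:unqiuenesscondition} buys (via the integral formula $P(\psi)(r,s)=\int_0^r\bigl(tV'(t)+(n+2-\beta)V(t)\bigr)t^{n+1}e^{\psi}\,\dd t$ and the limit $P(\psi)(\infty,s)=0$). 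Your observation that $\eta_\sigma=1+\sigma^{-1}\dot v$ is a supersolution where $(n+2-\sigma)V+rV'\ge 0$ is a nice reformulation of the same structural input near the origin, but it only controls the region to the left of $r_c$; to conclude you would still need an argument of Pokhozhaev/Lin type to handle the tail, so the proof as written has an unfixable gap rather than a repairable one.
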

\begin{remark}
  In Theorem \ref{thm:uniquenessposistivbeta}, if $n=0$, then by \cite{CK-94} we see that all the solutions to \eqref{eq:mainequation}, which satisfy \eqref{eq:assymptoticassump}, are radially symmetric. In conclusion, by Theorem \ref{thm:uniquenessposistivbeta}, there exists a unique solution to \eqref{eq:mainequation} if $\beta>0, n=0$. Note that, by Corollary \ref{cor:existencepositiveassymdecay}, if $n=0$ we need $\beta<2$ for the existence of solutions to \eqref{eq:mainequation}. We could not verify whether all the solutions to \eqref{eq:radialLiouvillesol} are radially symmetric if $n>0$ and one assumes an asymptotic property such as \eqref{eq:assymptoticassump}. 
\end{remark}

\begin{remark}
\label{rem:infinitelymanyradialsol}
    If $V=1$ in Theorem \ref{thm:uniquenessposistivbeta}, then the uniqueness fails because of the conformal property of the equation. To see this, consider $\beta = 2n$ for $n>0$ and 
    \begin{align*}
        \psi_{\lambda,n}(\bx) = \log \left ( \frac{1 }{(\lambda^{2n} |\bx|^{2n} + 1)^2} \right ) + \log \left(\frac{n}{\pi}\right) + 2n \log(\lambda), \quad \bx \in \R^2,
    \end{align*}
for every $\lambda>0$. Then,
\begin{align*}
     & -\Delta \psi_{\lambda,n}(\bx) = 8 \pi n |\bx|^{2(n-1)}  e^{\psi_{\lambda,n}(\bx)},\quad \bx \in \R^2,\\
    & \int_{\R^2} |\bx|^{2(n-1)}  e^{\psi_{\lambda,n}(\bx)} \, \dd \bx = 1.
\end{align*}
We refer to \cite{CK-94,PT-01}, where they studied the above solutions and their conformal properties.
    
\end{remark}

\subsection{Open problems and discussions}

We present three open problems related to \eqref{eq:mainequation}.
\begin{question}
    Is there a general existence result for the equation \eqref{eq:mainequation} in the case of functions $V$ which are not radially symmetric and $\beta \geq 2$?
\end{question}
 We mention that our approach fails as we rely on the properties of the symmetric functions for finding the minimizes of the energy functionals; see Example \ref{ex:nonradialminimizing}. Furthermore, there are counterexamples where $V$ satisfies the conditions \eqref{eq:minimumcondionV} and \eqref{eq:conditionondelta} for $\beta>0$, but there are no solutions to \eqref{eq:mainequation}. This can be seen as follows: Let $\beta =2$. By stereographic projection and conformal transformations, it is sufficient to find a Gaussian curvature $K$ on the two-dimensional unit sphere $(S^2,g)$, where $K$ and $\nabla^n K$ vanish at two points for any $i \leq n$ and large enough $n$, with the metric $g$ which is not conformally equivalent to the two-dimensional unit sphere with the Euclidean metric. To do so, we consider a radially symmetric $K$ which is non-positive in the southern hemisphere, $K$ and $\nabla^i K$ vanishes simultaneously at least at two points for any $i \leq n$ and large enough $n$, and $K$ is increasing and non-negative in the northern hemisphere. Then, by \cite[Thm. 1]{CL-95}, the two-dimensional unit sphere $(S^2,g)$ with the Gaussian curvature $K$ is not conformally equivalent to the Euclidean unit sphere. In conclusion, understanding the behavior of the function $V$ at the origin and infinity is insufficient to study the existence of the solutions to \eqref{eq:mainequation}.

\begin{question}
    What general conditions on $V$ imply that there exists a unique solution to \eqref{eq:mainequation} for $\beta>0$? 
\end{question}
In Theorem \ref{thm:uniquenessposistivbeta}, we have proved the uniqueness of radially symmetric solutions to \eqref{eq:mainequation} under the symmetry condition and asymptotic decay on $V$. However, it is not clear if there does not exist a non-radially symmetric solution if $n>0$. We note that the uniqueness fails for $\beta=2$ if one considers a non-radially symmetric $V$ even if we assume \eqref{eq:minimumcondionV} and \eqref{eq:conditionondelta}. In fact, by \cite[Thm. 2]{XY-93}, \cite[Thm. 3]{CL-95}, and stereographic projection, there exists a bounded radially symmetric function $V$, which is negative around the origin, and \eqref{eq:mainequation} has a non-radially symmetric solution. Hence, by rotation and conformal transformations, we obtain different solutions to \eqref{eq:mainequation} for a bounded non-radially symmetric Gaussian curvature, which is negative around the origin and infinity. In particular, they satisfy \eqref{eq:minimumcondionV} and \eqref{eq:conditionondelta}.

 \begin{question}
  Does a small perturbation of $V$ influence the existence or uniqueness?    
 \end{question}
 The answer to this question is positive in general, but there is a question of how it is influenced by the structure of $V$. To give some examples, we can consider $\beta=2$. Then, by Corollary \ref{cor:existencepositiveassymdecay}, there exists a solution to \eqref{eq:mainequation} for $V = r^{\delta}e^{-r^2}$ if $\delta>0$ but there are no solutions to \eqref{eq:mainequation} for $V=e^{-r^2} .$ Moreover, by Remark \ref{rem:infinitelymanyradialsol}, there exist infinitely many radially symmetric solutions to \eqref{eq:mainequation} for $V = 1$, however, there exists no solution to \eqref{eq:mainequation} if $V = e^{-\delta r^2}$ and $\delta>0$; see Corollary \ref{cor:existencepositiveassymdecay}. The example in which the answer is unclear is $V= |f|^2 e^{ \frac{-B |\bx|^2}{2} }$, where $B>0$ and $f$ is a complex polynomial. As we will see in Section \ref{sec:Quantum mechanincs}, this example appears in the study of nonlinear Landau levels of the Chern-Simons-Schr\"odinger model with an external magnetic field of strength $B$. We will prove the existence of the solutions if $f$ has only roots at the origin. To answer the general case, it is relevant to consider the perturbation of the symmetric case as follows: Let  $B>0$, $f_0(z) = z^n$, and $g_{\lambda}(z) = f_0(z) + \lambda g(z)$, where $\lambda>0,z \in \C, n \geq 0$, where $g$ is a complex polynomial of degree less than $f_0$. Then, is it possible to perturb the radially symmetric solutions to \eqref{eq:mainequation} for $V= |f_0|^2 e^{-B |\bx|^2}$ to obtain a solution to \eqref{eq:mainequation} for $V= |g_{\lambda}|^2 e^{-B |\bx|^2}$ if $\lambda$ is small enough?

\section{Preliminaries}

In this section, we assume that $V \in L^1_{\loc}(\R^2)$ satisfies  $V \geq C$ in $D(0,R_2) \setminus D(0,R_1)$ for some constants  $C>0, R_2> R_1>0$. For every open subset $\Omega \subset \R^2$ and $\alpha>0$,  the space $C^{0,\alpha}(\Omega)$ includes all the $\alpha$-H\"older continuous functions on $\Omega$, and $C_{\loc}^{0,\alpha}(\Omega)$ denotes the locally $\alpha$-H\"older continuous functions on $\Omega$. We use the notation $(r,\theta)$ as the polar coordinate on $\R^2$, and $\varphi(\bx) = \varphi(|\bx|)$ for a.e. $\bx \in \R^2$ and every radially symmetric function $\varphi \in L^1_{\loc}(\R^2)$.

\begin{proposition}
\label{prop:logbound}
    Let $R>0$ and $\varphi \in H^1(D(0,R))$ be a radially symmetric function. Then, $\varphi \in W^{1,\infty}_{\loc}(D(0,R) \setminus 0)$ and
\begin{align*}
    |\varphi(r)- \varphi(s)|^2 \leq \frac{\log\left(\frac{r}{s}\right)}{2\pi} \int_{D(0,R)} |\nabla \varphi|^2, \quad \textup{for every } 0<s\leq r \leq R. 
\end{align*}
Moreover, if $\Delta \varphi \in L^1(D(0,R))$, then $\varphi \in C^1(D(0,R) \setminus 0)$ and satisfies
\begin{align*}
   |\bx|\, |\nabla \varphi(\bx)| \leq \frac{1}{2\pi} \int_{D} |\Delta \varphi|,
\end{align*}
for every $\bx \in D(0,R) \setminus 0$.
    
\end{proposition}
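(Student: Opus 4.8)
The plan is to reduce everything to the one–dimensional radial profile of $\varphi$ and to a weighted first–order ODE on $(0,R)$. Write $u$ for that profile, so $\varphi(\bx)=u(|\bx|)$ and, away from the origin, $\nabla\varphi(\bx)=u'(|\bx|)\,\bx/|\bx|$. In polar coordinates
\[
\int_{D(0,R)}|\nabla\varphi|^2=2\pi\int_0^R|u'(r)|^2\,r\,\dd r,\qquad\int_{D(0,R)}|\varphi|^2=2\pi\int_0^R|u(r)|^2\,r\,\dd r,
\]
so $u$ and $u'$ both lie in $L^2\bigl((0,R);r\,\dd r\bigr)$. On every compact $[a,b]\subset(0,R)$ the weight $r$ is bounded between two positive constants, hence $u\in H^1((a,b))$ in the usual sense; thus $u$ has a representative absolutely continuous on each such $[a,b]$, i.e.\ $u\in H^1_{\loc}((0,R))$, which yields the stated local regularity of $\varphi$ on $D(0,R)\setminus 0$ and makes the pointwise formula $\nabla\varphi(\bx)=u'(|\bx|)\bx/|\bx|$ legitimate there. (For a merely $H^1$ radial function I would justify the two displayed identities, and later the radial form of the Laplacian, by approximating $\varphi$ in $H^1(D(0,R))$ by smooth radial functions.)

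For the inequality, fix $0<s\le r\le R$; since the weight is bounded on $[s,R]$, $u$ is absolutely continuous there, and the Cauchy--Schwarz inequality gives
\[
|u(r)-u(s)|=\Bigl|\int_s^r u'(t)\,\dd t\Bigr|\le\Bigl(\int_s^r|u'(t)|^2\,t\,\dd t\Bigr)^{1/2}\Bigl(\int_s^r\frac{\dd t}{t}\Bigr)^{1/2}=\Bigl(\int_s^r|u'(t)|^2\,t\,\dd t\Bigr)^{1/2}\sqrt{\log(r/s)}.
\]
Squaring, bounding $\int_s^r\le\int_0^R$, and using the first identity above gives $|\varphi(r)-\varphi(s)|^2\le\frac{\log(r/s)}{2\pi}\int_{D(0,R)}|\nabla\varphi|^2$.

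For the second part, assume in addition $\Delta\varphi\in L^1(D(0,R))$; since $\varphi$ is radial, so is $\Delta\varphi$, and a direct computation (testing against radial test functions) shows that $\bigl(r\,u'(r)\bigr)'=r\,\Delta\varphi(r)$ in $\mathscr D'((0,R))$. The right–hand side lies in $L^1((0,R))$ because $\int_0^R r\,|\Delta\varphi(r)|\,\dd r=\frac1{2\pi}\int_{D(0,R)}|\Delta\varphi|<\infty$, so $r\mapsto r\,u'(r)$ agrees a.e.\ with a function absolutely continuous on $[0,R)$, which therefore has a limit as $r\to0^+$. That limit must be $0$: otherwise $|u'(r)|\ge c/r$ for some $c>0$ and all small $r$, contradicting $u'\in L^2((0,R);r\,\dd r)$. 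Integrating from $0$,
\[
r\,u'(r)=\int_0^r t\,\Delta\varphi(t)\,\dd t,\qquad 0<r<R,
\]
which is continuous in $r$, so $u\in C^1((0,R))$ and hence $\varphi\in C^1(D(0,R)\setminus 0)$; moreover
\[
|\bx|\,|\nabla\varphi(\bx)|=r\,|u'(r)|\le\int_0^r t\,|\Delta\varphi(t)|\,\dd t\le\frac1{2\pi}\int_{D(0,R)}|\Delta\varphi|.
\]

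The polar–coordinate bookkeeping and the Cauchy--Schwarz step are routine; the delicate point is pinning down the constant of integration in $(r\,u')'=r\,\Delta\varphi$, i.e.\ showing $r\,u'(r)\to0$ as $r\to0$, since this is precisely what converts the $L^1$ control of $\Delta\varphi$ into the pointwise bound on $|\bx|\,|\nabla\varphi|$, and it is where $u'\in L^2((0,R);r\,\dd r)$ is used in an essential way. A secondary point is verifying that the reductions to the radial variable are valid for a general $H^1$ radial function, handled by smooth radial approximation.
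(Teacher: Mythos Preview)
Your proof is correct and follows essentially the same approach as the paper: Cauchy--Schwarz on the radial derivative for the logarithmic oscillation estimate, and integration of the radial Laplacian identity for the gradient bound. The only notable technical difference is in the second part: the paper obtains $r\,u'(r)=\int_0^r t\,\Delta\varphi(t)\,\dd t$ by regularizing $\Delta\varphi$ and passing to the limit, whereas you work directly with the distributional ODE $(r\,u')'=r\,\Delta\varphi$ and pin down the integration constant by arguing that $r\,u'(r)\to0$ via $u'\in L^2((0,R);r\,\dd r)$---a cleaner and more self-contained step that makes explicit exactly where the $H^1$ hypothesis enters.
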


\begin{proof}
The proof is a standard application of the Cauchy-Schwarz inequality. For the convenience of the reader, we bring it here. Since $\partial_r \varphi \in L^2_{\loc}(D(0,R) \setminus 0)$, by the Sobolev embedding theorem, we obtain $\varphi \in C^{0,\alpha}_{\loc}(D(0,R) \setminus 0)$ for some $\alpha>0$. Moreover, by Cauchy-Schwarz inequality, we imply
\begin{equation}
\label{eq:lipschitzchauchySchwarzbound}
\begin{aligned}
   |\varphi(r)-\varphi(s)|^2 &\leq  \left(\int_s^r |\nabla \varphi(t)| \, \dd t \right)^2\\ &\leq \frac{1}{2\pi} \int_s^r \frac{1}{t} \, \dd t \int_{D(0,R)} |\nabla \varphi|^2 \\&\leq \frac{\log\left(\frac{r}{s}\right)}{2\pi} \int_{D(0,R)} |\nabla \varphi|^2,
   \end{aligned}
   \end{equation}
for  $0<s\leq r\leq R.$ Hence, $\varphi \in W^{1,\infty}_{\loc}(\R^2 \setminus 0)$. Finally, by assuming $\Delta \varphi \in L^1(D(0,R))$, taking the integration of the regularization of $\Delta \varphi$, and using the radially symmetric property of $\varphi$, we arrive at $\varphi \in C^1(D \setminus 0)$ and
\begin{align*}
     |\bx| \, |\nabla \varphi(\bx)| \leq  
 \frac{\int_{D} |\Delta \varphi|}{2\pi },
\end{align*}
for every $\bx \in D \setminus 0.$ 
\end{proof}

\begin{proposition}
\label{prop:regularity}
    Let $D \subset \R^2$ be a finite disk centered at the origin, $V$ be a radially symmetric function such that 
    \begin{align}
\label{eq:strongnegativeintegral}
    |V(\bx)|\,  |\bx|^{-\delta} \in L^1(D),
\end{align}
   for some $\delta>0$, and $\psi \in H^1(D)$ be a radially symmetric distributional solution of 
\begin{align}
\label{eq:dirstribequpsi}
    -\Delta \psi = V e^{\psi},\quad \textup{in } D,
\end{align}
where $V e^{\phi} \in L^1(D).$ Then, $\psi \in C^{0,\alpha}_{\loc}(D)$ for every $\alpha$ satisfying
\begin{align*}
    \alpha = 1, \quad \textup{if } &\delta >1,\\
    0<\alpha < \delta, \quad \textup{if } &\delta \leq 1.
\end{align*}

    \end{proposition}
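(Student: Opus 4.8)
The plan is to establish interior H\"older regularity by a Newtonian-potential representation and a careful estimate of the modulus of continuity near the origin, where the singularity of $V$ lives. First I would fix a slightly smaller disk $D' = D(0,\rho)$ compactly contained in $D$, and split the right-hand side $f := V e^{\psi}$ on $D'$. Away from the origin, Proposition \ref{prop:logbound} already gives $\psi \in C^1(D \setminus 0)$, so the only issue is continuity at $0$ and the uniform modulus of continuity on $D'$. Since $\psi \in H^1(D)$ is radially symmetric, Proposition \ref{prop:logbound} gives the pointwise bound $|\psi(\bx)| \le |\psi(\rho)| + \big(\log(\rho/|\bx|)/2\pi\big)^{1/2}\|\nabla\psi\|_{L^2(D)}$, i.e. $e^{\psi(\bx)} \le C_\epsilon |\bx|^{-\epsilon}$ for every $\epsilon>0$ (the square-root of the log beats any power). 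Combined with \eqref{eq:strongnegativeintegral}, this yields $f = V e^{\psi} \in L^1(D')$ with the sharper weighted bound $|f(\bx)|\,|\bx|^{-\delta+\epsilon} \in L^1(D')$ for every $\epsilon>0$; more precisely $|f(\bx)| \le |V(\bx)|\, C_\epsilon |\bx|^{-\epsilon}$, so $f$ inherits essentially the same weighted integrability as $V$.

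Next I would represent $\psi$ on $D'$ as a Newtonian potential plus a harmonic remainder: writing $w(\bx) := \frac{1}{2\pi}\int_{D'} \log\frac{1}{|\bx-\by|} f(\by)\,\dd\by$, the function $\psi - w$ is harmonic in $D'$, hence smooth there, so it suffices to prove $w \in C^{0,\alpha}_{\loc}(D')$ with the claimed $\alpha$. For the H\"older estimate of $w$ I would bound $|w(\bx_1)-w(\bx_2)|$ for $\bx_1,\bx_2$ in a compact subset, say both in $D(0,\rho/2)$, setting $h = |\bx_1 - \bx_2|$. Split the integral into the region $\{|\by| \le 2h\}$ near the origin — where one uses only $|\log|\bx_i-\by||$ is locally integrable against the weight $|\by|^{-\delta+\epsilon}$, giving a contribution $O(h^{\delta-\epsilon})$ when $\delta \le 1$ (and $O(h\log(1/h))$, hence $O(h^{1-\epsilon})$ type, when $\delta>1$) — and the far region $\{|\by| > 2h\}$, where the standard estimate $|\log|\bx_1-\by| - \log|\bx_2-\by|| \le C h/|\by|$ applies and one integrates $h/|\by|$ against $|V(\by)|\,|\by|^{-\epsilon}$; since $|V(\by)|\,|\by|^{-\delta}$ is integrable, $\int_{2h<|\by|<\rho} |V(\by)|\,|\by|^{-1-\epsilon}\,\dd\by \lesssim h^{\delta - 1 - \epsilon}$ when $\delta<1$ and is bounded when $\delta>1$, so the far contribution is also $O(h^{\delta-\epsilon})$ resp. $O(h)$. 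Taking $\epsilon$ small gives $w \in C^{0,\alpha}$ for every $\alpha<\delta$ when $\delta\le 1$, and for $\alpha=1$ when $\delta>1$ (in the borderline $\delta>1$ case the $h\log(1/h)$ factor is absorbed since one only claims Lipschitz, not $C^1$; alternatively note $f \in L^p(D')$ for some $p>1$ when $\delta>1$ by H\"older, and invoke the classical Calder\'on--Zygmund/Morrey embedding $W^{2,p} \hookrightarrow C^{1,1-2/p}$).

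The main obstacle is handling the singular weight near the origin uniformly: the bound $e^{\psi} \le C_\epsilon |\bx|^{-\epsilon}$ costs an arbitrarily small power, so one must check that this loss can always be absorbed into the $\epsilon$ already present in the target exponent $\alpha < \delta$, and that the near-origin piece of the potential integral genuinely produces a H\"older modulus rather than only continuity. A clean way to organize this is to prove the quantitative estimate $\int_{D(0,s)} |V(\by)|\, |\by|^{-\epsilon}\,\dd\by \le C s^{\delta-\epsilon}$ for $s$ small (which follows from \eqref{eq:strongnegativeintegral} and a dyadic decomposition, or directly from the definition of $\alpha(V)$-type suprema), and then feed it into the two-region split above. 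I would also remark that since the statement is local and the harmonic part contributes nothing to the modulus of continuity, there is no boundary issue, and the radial symmetry is used only through Proposition \ref{prop:logbound} to get the pointwise control on $e^{\psi}$; once that control is in hand the argument is the standard potential-theoretic regularity estimate.
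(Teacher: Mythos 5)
Your reduction to the Newtonian potential $w$ of $f:=Ve^{\psi}$ plus a harmonic remainder is fine, as is the pointwise bound $e^{\psi(\bx)}\le C_\epsilon|\bx|^{-\epsilon}$ and the resulting weighted estimate $\int_{D(0,s)}|f|\lesssim s^{\delta-\epsilon}$. The gap is in the two--point estimate for $w$. Your split at $|\by|=2h$ together with the kernel bound $|\log|\bx_1-\by|-\log|\bx_2-\by||\le Ch/|\by|$ on $\{|\by|>2h\}$ is only valid when both $\bx_1,\bx_2$ lie within $O(h)$ of the origin; for a general pair of points in $D(0,\rho/2)$ the bound fails badly near $\by=\bx_i$ (the left side blows up there while $h/|\by|$ stays bounded). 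The region $\{|\by-\bx_i|\le 2h\}$ with $\bx_i$ away from the origin cannot be absorbed either, because there the hypotheses give nothing beyond $f\in L^1$: the weight $|\bx|^{-\delta}$ is comparable to a constant away from $0$, and $e^{\psi}$ is merely locally bounded. The Newtonian potential of a general $L^1$ density is not H\"older continuous, nor even continuous, so "the standard potential-theoretic regularity estimate" does not close at such points. Your closing remark that radial symmetry enters only through the bound on $e^{\psi}$ is therefore the precise point of failure: radial symmetry must be used a second time, e.g.\ via Newton's theorem, which for radial $f$ collapses the potential to $w(r)=\int f(\by)\log\frac{1}{\max(r,|\by|)}\,\dd\by$ and gives $|w'(r)|\le\frac{1}{2\pi r}\int_{D(0,r)}|f|$; only then does $L^1$ control of $f$ yield a Lipschitz modulus away from the origin and the $|\bx|^{\delta-\epsilon-1}$ gradient bound near it.

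Once you make that repair you land exactly on the paper's proof, which skips the potential representation altogether: since $\psi$ is radial, integrating the equation over $D(0,r)$ gives $2\pi r|\partial_r\psi(r)|\le\int_{D(0,r)}|V|e^{\psi}$ (Proposition \ref{prop:logbound}), the Moser-type estimate $e^{\psi(\by)}\le C\,e^{\psi(\bz)}\bigl(1+(R/|\bz|)^{2\varepsilon}\bigr)|\by|^{-\varepsilon}\cdot|\by|^{\varepsilon}$-type bound converts this into $|\nabla\psi(\bx)|\le C|\bx|^{\delta-\varepsilon-1}$, and integrating along radii gives $C^{0,\delta-\varepsilon}$ for $\delta\le1$ and Lipschitz for $\delta>1$. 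I recommend either adopting that route or explicitly invoking the angular average $\fint_{|\by|=s}\log\frac{1}{|\bx-\by|}=\log\frac{1}{\max(s,|\bx|)}$ before running your near/far decomposition.
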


\begin{proof}
   By Proposition \ref{prop:logbound}, we arrive at $\psi \in C^1(D \setminus 0)$ and
\begin{align}
    \label{eq:gradboundpsiprop}
     |\bx| \, |\nabla \psi(\bx)| \leq  
 \frac{\int_{D} |V|\, e^{\psi}}{2\pi },
\end{align}
for every $\bx \in D \setminus 0.$ Let $R$ be the radius of $D$. Then, by Proposition \ref{prop:logbound} and \eqref{eq:gradboundpsiprop}, we derive that 
\begin{align*}
   & |\nabla \psi(\bx)| \leq  \frac{\int_{D(0,|\bx|)} |V|\, e^{\psi}}{2\pi }\, |\bx|^{-1}  \\& \leq  \frac{\int_{D} |V(\by)|  \, e^{\psi(\by)}\, |\by|^{\varepsilon-\delta} \, \dd \by}{2\pi } 
 \,|\bx|^{\delta-\varepsilon-1} 
    \\ &\leq   \frac{ e^{\psi(\bz) + \frac{\int_{D} |\nabla \psi|^2}{8\pi \varepsilon}} \int_{D} |V(\by)|  \, e^{\frac{2\pi \varepsilon (\psi(\by)-\psi(\bz))^2}{\int_{D} |\nabla \psi|^2} }\, |\by|^{\varepsilon-\delta} \, \dd \by}{2\pi }\,  |\bx|^{\delta-\varepsilon-1} 
   \\& \leq   \frac{ e^{\psi(\bz) + \frac{\int_{D} |\nabla \psi|^2}{8\pi \varepsilon}} \int_{D} |V(\by)|  \,  \, |\by|^{-\delta} \, \dd \by}{2\pi } \left(1+\left(\frac{R}{|\bz|} \right)^{2\varepsilon}\right)\, |\bx|^{\delta-\varepsilon-1}, 
\end{align*}
for every $\bz \in D \setminus 0$ and $0<\varepsilon<\delta.$ Finally, by taking the integral of both sides above, we complete the proof.

\end{proof}

\begin{lemma}
\label{lem:symmetricliouvillenegativebeta}
    Let $\beta <0$, $V$ be radially symmetric and non-negative, and $D \subset \R^2$ be a disk centered at the origin such that $D(0,R_2) \subset D$. Then, there exists a radially symmetric function $\phi \in H^1_0(D) \cap C^1(D\setminus 0)$ which satisfies 
    \begin{align*}
       -\Delta \phi = 4 \pi \beta \frac{V e^{\phi}}{\int_{D} V e^{\phi} },
    \end{align*}
weakly in $D$, where $\int_{D} V e^{\phi} < \infty$. Moreover, $\phi$ is a radially increasing function which satisfies 
\begin{align*}
   |\bx|\, |\nabla \phi(\bx)| \leq -2\beta,
\end{align*}
for every $\bx \in D \setminus 0$.
\end{lemma}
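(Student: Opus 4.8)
The plan is to obtain $\phi$ as a minimizer of the Moser-type functional
\[
\cE_D[\varphi] := \frac12\int_D|\nabla\varphi|^2 - 4\pi\beta\log\left(\int_D Ve^{\varphi}\right)
\]
over the set of radially symmetric $\varphi\in H^1_0(D)$ with $\int_D Ve^{\varphi}>0$; this set is nonempty since $\int_D V\ge C\pi(R_2^2-R_1^2)>0$. Because $-4\pi\beta>0$, replacing $\varphi$ by $\min(\varphi,0)$ leaves the set invariant and does not increase $\cE_D$ (it decreases the Dirichlet term and $\int_D Ve^{\varphi}$ simultaneously), so I would run the minimization over non-positive radial functions, for which $0\le Ve^{\varphi}\le V\in L^1(D)$. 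For coercivity, Proposition \ref{prop:logbound} applied with $\varphi\equiv 0$ on $\partial D$ (let $R$ be the radius of $D$) gives $|\varphi(r)|^2\le\frac{\log(R/r)}{2\pi}\int_D|\nabla\varphi|^2$, hence $\varphi\ge -c_1\norm{\nabla\varphi}_{L^2(D)}$ on the annulus $D(0,R_2)\setminus D(0,R_1)$ with $c_1:=\sqrt{\log(R/R_1)/(2\pi)}$, so that $\int_D Ve^{\varphi}\ge C\pi(R_2^2-R_1^2)e^{-c_1\norm{\nabla\varphi}_{L^2(D)}}$ and, since $-4\pi\beta>0$,
\[
\cE_D[\varphi]\ \ge\ \tfrac12\norm{\nabla\varphi}_{L^2(D)}^2 - 4\pi|\beta|\,c_1\norm{\nabla\varphi}_{L^2(D)} - 4\pi\beta\log\!\big(C\pi(R_2^2-R_1^2)\big),
\]
which is bounded below and tends to $+\infty$ as $\norm{\nabla\varphi}_{L^2(D)}\to\infty$.

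With this in hand I would run the direct method. A minimizing sequence $(\varphi_n)$, taken radial and $\le 0$, is bounded in $H^1_0(D)$; passing to a subsequence, $\varphi_n\rightharpoonup\phi$ in $H^1_0(D)$ and $\varphi_n\to\phi$ a.e.\ by Rellich compactness, and $\phi$ is radial and non-positive. The Dirichlet energy is weakly lower semicontinuous, and since $0\le Ve^{\varphi_n}\le V\in L^1(D)$ with $\varphi_n\to\phi$ a.e., dominated convergence gives $\int_D Ve^{\varphi_n}\to\int_D Ve^{\phi}$; the coercivity estimate also keeps $\int_D Ve^{\varphi_n}$ bounded away from $0$, so $\int_D Ve^{\phi}>0$. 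Hence $\cE_D[\phi]\le\liminf_n\cE_D[\varphi_n]$, i.e.\ $\phi$ minimizes $\cE_D$.

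Next I would derive the Euler--Lagrange equation: differentiating $t\mapsto\cE_D[\phi+t\xi]$ at $t=0$ for radial $\xi\in C_c^\infty(D)$ (allowed by dominated convergence, $\xi$ being bounded with compact support and $\int_D Ve^{\phi}>0$) gives $\int_D\nabla\phi\cdot\nabla\xi = \frac{4\pi\beta}{\int_D Ve^{\phi}}\int_D V\xi e^{\phi}$, and averaging a general test function over rotations (both $\phi$ and $Ve^{\phi}$ being radial) upgrades this to all $\xi\in C_c^\infty(D)$, so $-\Delta\phi = \frac{4\pi\beta\,Ve^{\phi}}{\int_D Ve^{\phi}}$ weakly in $D$. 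Since $\beta<0$ and $V\ge 0$ the right-hand side is $\le 0$, so $\phi$ is subharmonic with zero boundary trace, whence $\phi\le 0$ in $D$ and $\int_D Ve^{\phi}\le\int_D V<\infty$. The right-hand side then belongs to $L^1(D)$ with integral $-4\pi\beta$, so Proposition \ref{prop:logbound} yields $\phi\in C^1(D\setminus 0)$ with $|\bx|\,|\nabla\phi(\bx)|\le\frac{1}{2\pi}\int_D|\Delta\phi| = -2\beta$. Finally, integrating the equation over $D(0,r)$ as in Remark \ref{rem:assymtoticradialsol} gives $r\,\partial_r\phi(r) = -2\beta\,\frac{\int_{D(0,r)}Ve^{\phi}}{\int_D Ve^{\phi}}\ge 0$, so $\phi$ is radially non-decreasing, completing the proof.

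The step I expect to be the main obstacle is the coercivity and lower-boundedness of $\cE_D$, precisely because $V$ is only assumed locally integrable: a plain Moser--Trudinger estimate controls $\int_D e^{\varphi}$ but not $\int_D Ve^{\varphi}$ when $V\notin L^\infty$. The resolution is that coercivity needs only a \emph{lower} bound on $\int_D Ve^{\varphi}$ (since $-4\pi\beta>0$), which is supplied cheaply by the fixed annulus on which $V\ge C$ together with the pointwise control of radial $H^1_0$-functions from Proposition \ref{prop:logbound}; the complementary \emph{upper} bound needed for lower semicontinuity is handled by restricting at the outset to $\varphi\le 0$, for which $\int_D Ve^{\varphi}\le\int_D V<\infty$ automatically.
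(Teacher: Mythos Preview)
Your argument is correct, and it follows the same overall variational strategy as the paper (minimize the Moser-type functional over radial $H^1_0(D)$ functions, then read off the equation, regularity, gradient bound, and monotonicity from Proposition~\ref{prop:logbound} and integration of the equation). The route differs in two places worth noting.

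For coercivity, the paper writes $\varphi=\lambda\psi$ with $\lambda>\max(-\beta/2,1)$, uses Jensen's inequality on the annulus, and then invokes the Trudinger--Moser inequality to bound $\int_\Omega e^{\psi}$ from below in terms of $\int_D|\nabla\psi|^2$; this absorbs a fraction of the Dirichlet energy. Your argument bypasses Trudinger--Moser entirely: Proposition~\ref{prop:logbound} already gives the pointwise bound $|\varphi(r)|\le c_1\|\nabla\varphi\|_{L^2}$ on the annulus (using $\varphi(R)=0$, which is justified by extending $\varphi\in H^1_0(D)$ by zero to a slightly larger disk), and this immediately yields a lower bound on $\int_D Ve^{\varphi}$ linear in $\|\nabla\varphi\|_{L^2}$. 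Since $-4\pi\beta>0$, coercivity follows from the resulting quadratic-minus-linear lower bound. This is strictly more elementary and exploits the radial symmetry more directly.

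For the passage to the limit, the paper uses Fatou's lemma on $\int_D Ve^{\varphi_n}$, whereas you first observe that truncation $\varphi\mapsto\min(\varphi,0)$ does not increase $\cE_D$, so one may minimize over nonpositive competitors; then $0\le Ve^{\varphi_n}\le V\in L^1(D)$ and dominated convergence gives the limit directly (and also the lower bound $\int_D Ve^{\phi}>0$ via the uniform annulus estimate). This truncation trick is a clean simplification; it also makes the subsequent claim $\phi\le 0$ (hence $\int_D Ve^{\phi}\le\int_D V<\infty$) immediate, though you correctly recover it independently from subharmonicity. The paper's route is more robust if one later wants non-radial or sign-changing variants (see the remark after the paper's proof), while yours is tailored to the radial, nonnegative-$V$ setting of the lemma.
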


\begin{proof}
    The idea of the proof is to first consider the minimization of the energy functional 
    \begin{align*}
      \mathcal{E}[\varphi] =  \frac{1}{2} \int_{D} |\nabla \varphi|^2 - 4 \pi \beta \log \left ( \int_{D} V e^{\varphi} \right ),
    \end{align*}
 over $ \cM$ which consists of radially symmetric functions $\varphi \in H^1_0(D)$ such that $\int_{D} V e^{\varphi}< \infty$. To show that there exists a minimizer for $\mathcal{E}$ over $ \cM$, we consider $\lambda > \max\left(-\frac{\beta}{2},1\right)$, $\Omega:= D(0,R_2) \setminus D(0,R_1)$, and $\varphi = \lambda \psi$, where $\varphi \in  \cM \setminus 0$. Then,

\begin{align*}
 &\log \left ( \int_{D} V e^{\varphi} \right ) \geq 
    \log(C) + \log \left ( \int_{\Omega} e^{\lambda \psi} \right )
    \\ &\geq  \log\left(\frac{C}{|\Omega|^{\lambda-1}}\right) +\lambda \log \left ( \int_{\Omega} e^{ \psi} \right ),
\end{align*}
where we used H\"older's inequality on the last inequality. Hence,
\begin{equation}
\label{eq:energylowerb1}
\begin{aligned}
   \mathcal{E}[\varphi] &\geq \frac{\lambda^2}{2} \int_{D} |\nabla \psi|^2 -4 \pi \beta \left ( \log\left(\frac{C}{|\Omega|^{\lambda-1}}\right) +\lambda \log \left ( \int_{\Omega} e^{ \psi} \right ) \right)
    \\ &= -4\pi \beta \lambda \left(\left(-\frac{\lambda}{8\pi \beta}- \frac{1}{16\pi}\right) \int_{D} |\nabla \psi|^2 \right)\\& -4\pi \beta \lambda \left(\frac{1}{16\pi } \int_{D} |\nabla \psi|^2 +  \log \left ( \int_{\Omega} e^{ \psi} \right)\right)- 4\pi \beta \log\left(\frac{C}{|\Omega|^{\lambda-1}}\right).
\end{aligned}
\end{equation}
Now, by Cauchy-Schwarz inequality and Trudinger-Moser inequality; see \cite{T-67, M-71}, we have
\begin{equation}
\label{eq:moserbound}
    \begin{aligned} C_D
 &e^{\left(\frac{1}{16 \pi } \int_{D} |\nabla \psi|^2 + \log \left ( \int_{\Omega} e^{ \psi} \right)\right)} \\&\geq e^{\left(\frac{1}{16 \pi } \int_{D} |\nabla \psi|^2 + \log \left ( \int_{\Omega} e^{ \psi} \right)\right)} \int_D e^{\left(\frac{4\pi |\psi|^2}{\int_D |\nabla \psi|^2}\right)}
 \\ & \geq \int_{D} e^{-\psi} \int_{\Omega} e^{\psi} \geq  |\Omega|^2,
    \end{aligned}
\end{equation}
for a constant $C_D$ which depends on $|D|$. Hence, by \eqref{eq:energylowerb1} and \eqref{eq:moserbound}, we obtain that 
\begin{align}
\label{eq:energyb2}
    \mathcal{E}[\varphi]\geq C_1 \int_{D} |\nabla \varphi|^2-C_2 ,
 \end{align}
where $C_1,C_2\in \R$ are positive constants which depend on $D,\Omega,C,\lambda,\beta.$ Now, let $\varphi_n \in  \cM$ be a minimizing sequence for $\mathcal{E}$. Then, by \eqref{eq:energyb2}, we obtain that both $\int_D |\nabla \varphi_n|^2$ and $ \left|\log\left(\int_D V e^{\varphi_n}\right)\right|$ are uniformly bounded. Hence, by Rellich-Kondrakov theorem and Fatou's lemma, we conclude that $\varphi_n$ converges pointwise and in $L^p(D)$ to $\phi \in  \cM$ for every $p>0$, up to a subsequence, and 
\begin{align*}
    \mathcal{E}[\phi] \leq \liminf_{n \to \infty} \mathcal{E}[\varphi_n].
\end{align*}
Finally, since $\phi$ is a critical point for $\mathcal{E}$ over $ \cM$, we obtain 
  \begin{align}
  \label{eq:equationphisymmetricnegative}
       -\Delta \phi = 4 \pi \beta \frac{V e^{\phi}}{\int_{D} V e^{\phi} },
    \end{align}
weakly in $D$. Moreover, Proposition \ref{prop:logbound}, we arrive at $\phi \in C^1(D \setminus 0)$ and
\begin{align*}
     |\bx| \, |\nabla \phi(\bx)| = 4 \pi \beta 
 \frac{\int_{D} V e^{\phi}}{\int_{D} V e^{\phi} } = 4 \pi \beta,
\end{align*}
for every $\bx \in D \setminus 0,$ which completes the proof.

\end{proof}

\begin{remark}
    The same proof could have been used to obtain a more general result as follows: Let $\beta<0, \Omega \subset \R^2$ be a bounded open subset such that $D(0,R_2) \subset \Omega$, and $V$ be a non-negative function which is not necessarily radially symmetric. Then, there exists $\phi \in H^1_0(\Omega)$ such that
\begin{align*}
    -\Delta \phi = 4 \pi \beta \frac{V e^{\phi}}{\int_{\Omega} V e^{\phi}}, \quad \textup{weakly in } \R^2.
\end{align*}

\end{remark}

\begin{lemma}
\label{lem:degreeformula}
Let $V \in L^1_{\loc}(\R^2), \psi \in L^1_{\loc}(\R^2)$ such that $V e^{\psi} \in L^1 \cap L^p(\R^2)$ for some $p>1.$ Assume that
\begin{align*}
    -\Delta \psi = V e^{\psi},
\end{align*}
weakly in $\R^2$ and either of the following conditions holds:\\
$(i).$ The function $\psi$ is bounded from above and $\int_{\R^2} V e^{\psi} \geq 0$.\\
$(ii).$ The function $\psi$ is bounded from below and $\int_{\R^2} V e^{\psi} \leq 0$.\\\\
Then,
\begin{align*}
  \lim_{\bx \to \infty}  \frac{\psi(\bx)}{\log|\bx|} = -\frac{\int_{\R^2} V e^{\psi}}{2\pi}.
\end{align*}
Moreover, if we assume the additional condition that 
\begin{equation}
\begin{aligned}
\label{eq:assymptoticdecreaseofVlog}
&\int_{\R^2} |V(\by)|\, e^{\psi(\by)} \log (|\by|+1) \, \dd \by < \infty,
\end{aligned}
\end{equation}
    then there is a constant $C>0$ such that
    \begin{align*}
    - \frac{\int_{\R^2} V e^{\psi}}{2\pi} \log( |\bx| +1) -C  \leq \psi(\bx) \leq -\frac{\int_{\R^2} V e^{\psi}}{2\pi} \log (|\bx| +1) + C, \quad\textup{for a.e. } \bx \in \R^2.
    \end{align*}
\end{lemma}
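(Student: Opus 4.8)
The overall strategy is to represent $\psi$ up to a harmonic correction via the Newtonian potential of the measure $\mu := V e^{\psi}\, \dd\bx$, which is a finite signed measure on $\R^2$ of total mass $m := \int_{\R^2} V e^{\psi}$, and then to control the harmonic part by the one-sided boundedness hypothesis in $(i)$ or $(ii)$. Concretely, set
\[
w(\bx) := -\frac{1}{2\pi}\int_{\R^2} \log|\bx - \by|\, \dd\mu(\by),
\]
so that $-\Delta w = V e^{\psi}$ weakly in $\R^2$ (this is legitimate since $V e^{\psi}\in L^1\cap L^p$ with $p>1$, which gives $w\in W^{2,p}_{\loc}$ and in particular $w\in C(\R^2)$; the $L^p$ bound is exactly what controls the local singularity of the logarithmic kernel). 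Then $h := \psi - w$ is harmonic in $\R^2$. From the standard asymptotics of the logarithmic potential of a compactly-almost-supported finite measure, $w(\bx) = -\frac{m}{2\pi}\log|\bx| + o(\log|\bx|)$ as $\bx\to\infty$; more precisely, splitting the $\by$-integral over $D(0,|\bx|^{1/2})$ and its complement and using $\int_{\R^2}|V|e^{\psi} < \infty$, one gets $w(\bx)/\log|\bx| \to -m/2\pi$.

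The first main step is then to show $h$ is constant, using the one-sided bound. In case $(i)$, $\psi$ is bounded above and $m\geq 0$; since $w(\bx) \geq -\frac{m}{2\pi}\log|\bx| - o(\log|\bx|)$ is bounded below on bounded sets but can only decay like $-\frac{m}{2\pi}\log|\bx|$ (hence is bounded below by a constant when $m = 0$, and grows at most logarithmically toward $-\infty$ when $m>0$), we obtain that $h = \psi - w$ is bounded above by a function growing at most like $\frac{m}{2\pi}\log|\bx|$; a harmonic function on $\R^2$ with $h(\bx) \leq C + \eps\log|\bx|$ for every $\eps>0$ (equivalently $h^+(\bx) = o(\log|\bx|)$ after absorbing, or more carefully $\limsup h(\bx)/\log|\bx| \leq 0$) must be constant by the Liouville-type theorem for subharmonic growth (a harmonic function bounded above by $o(|\bx|^{\eps})$, or even by $C\log|\bx|$, on $\R^2$ is constant — one can see this from the mean value property and a barrier, or by noting $e^{h}$ or $-h$ is then a bounded-above harmonic/subharmonic function after a Kelvin-type argument). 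Actually the cleanest route: $h$ bounded above by $A\log(2+|\bx|)$ for some $A\ge 0$ forces, via the Poisson–Jensen representation on $D(0,R)$ and letting $R\to\infty$, that $h$ is constant. Case $(ii)$ is symmetric: $\psi$ bounded below, $m\leq 0$, so $-h$ is bounded above by a logarithmically-growing function and the same theorem applies.

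With $h\equiv c$ constant we have $\psi = w + c$, whence $\psi(\bx)/\log|\bx| \to -m/2\pi$, which is the first claim (recall $V e^{\psi}\in L^1$ gives $m = \int_{\R^2} Ve^{\psi}$ well-defined; here I have not yet used $\int Ve^{\psi}=1$, consistent with the statement). For the refined two-sided bound under the extra hypothesis \eqref{eq:assymptoticdecreaseofVlog}, I would return to the explicit formula $\psi(\bx) = c - \frac{1}{2\pi}\int_{\R^2}\log|\bx-\by|\,\dd\mu(\by)$ and write
\[
\psi(\bx) + \frac{m}{2\pi}\log(|\bx|+1) = c - \frac{1}{2\pi}\int_{\R^2}\log\frac{|\bx-\by|}{|\bx|+1}\,\dd\mu(\by).
\]
The kernel $\bigl|\log\frac{|\bx-\by|}{|\bx|+1}\bigr|$ is bounded by $C\bigl(1 + \log(|\by|+1) + \bigl|\log|\bx-\by|\bigr|\1_{|\bx-\by|\le 1}\bigr)$ uniformly in $\bx$ (split into $|\by|\leq (|\bx|+1)/2$, $|\bx-\by|\leq 1$, and the intermediate regime, using $\log\frac{|\bx-\by|}{|\bx|+1} \in [\log\frac{1}{2|\bx|+2},\ \log 2]$ type estimates together with the triangle inequality); integrating against $|V|e^{\psi}$ and invoking \eqref{eq:assymptoticdecreaseofVlog} together with $Ve^{\psi}\in L^p$ (for the local $\log$ singularity near $\bx$) yields a bound uniform in $\bx$, giving the desired $\pm C$.

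The step I expect to be the main obstacle is the Liouville-type rigidity that forces $h$ to be constant from a one-sided logarithmic growth bound: one must be careful that $o(\log|\bx|)$-type control is genuinely available (it is, from dominated convergence in the potential's far-field expansion once $\int|V|e^{\psi}<\infty$ is used), and that the classical statement "a harmonic function on $\R^2$ bounded above by $C\log|\bx|+C'$ is constant" is applied in the correct form — this is where the hypotheses $\psi$ bounded above/below and the sign of $m$ must be combined exactly right, since without the sign condition one could have $h$ growing like $+\log|\bx|$ and the conclusion would fail. The rest is routine potential-theoretic bookkeeping.
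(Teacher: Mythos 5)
Your proposal follows essentially the same route as the paper: represent $\psi$ as a logarithmic potential of $Ve^{\psi}$ plus a harmonic remainder, use the one-sided bound on $\psi$ together with the sign of $m=\int_{\R^2}Ve^{\psi}$ to show the remainder is a harmonic function bounded above by $C\log(2+|\bx|)$ (hence constant), read off the asymptotics from the potential, and prove the two-sided bound by a direct kernel estimate under \eqref{eq:assymptoticdecreaseofVlog}. The Liouville step and the kernel estimate in your last paragraph are both correct as stated.

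There is, however, one genuine gap at the very first step: the unrenormalized potential
\begin{align*}
w(\bx) := -\frac{1}{2\pi}\int_{\R^2}\log|\bx-\by|\,V(\by)e^{\psi(\by)}\,\dd\by
\end{align*}
need not be well defined under the hypotheses of the first half of the lemma. For fixed $\bx$ the kernel grows like $\log|\by|$ as $\by\to\infty$, and an $L^1$ density need not have a finite logarithmic moment (take $|V|e^{\psi}(\by)=|\by|^{-2}(\log|\by|)^{-2}$ for $|\by|\ge 2$: it lies in $L^1\cap L^p$, yet $\int \log|\by|\,|V|e^{\psi}\,\dd\by=\infty$). The condition \eqref{eq:assymptoticdecreaseofVlog} that would rescue absolute convergence is only assumed in the \emph{second} half of the statement, so for the first half your $w$ may be identically $-\infty$ and the decomposition $\psi=w+h$ does not get off the ground; the split over $D(0,|\bx|^{1/2})$ and its complement controls the size of the far-field contribution only once that contribution is known to be finite. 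The standard repair is exactly what the paper does: work with the renormalized kernel $\log|\bx-\by|-\log(|\by|+1)$, i.e.
\begin{align*}
\phi(\bx):=-\frac{1}{2\pi}\int_{\R^2}\bigl(\log|\bx-\by|-\log(|\by|+1)\bigr)V(\by)e^{\psi(\by)}\,\dd\by,
\end{align*}
whose tail is integrable against any finite measure because $\bigl|\log\tfrac{|\bx-\by|}{|\by|+1}\bigr|\le \log(|\bx|+1)+C$ off the diagonal, while the local singularity is handled by $Ve^{\psi}\in L^p$. With $\phi$ in place of $w$ the rest of your argument (harmonicity of $\psi-\phi$, the one-sided logarithmic bound, constancy, and the uniform estimate of $\phi(\bx)+\tfrac{m}{2\pi}\log(|\bx|+1)$ under \eqref{eq:assymptoticdecreaseofVlog}) goes through as you describe and matches the paper's proof, which cites \cite{ALN-24} and \cite{CL-93} for these potential-theoretic facts.
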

\begin{proof}
    We prove the case that $\psi$ satisfies $(i)$ and the other case follows from the same argument. The argument is similar to \cite[Thm. 1]{CL-93} and \cite[Lem. 3.15]{ALN-24}, and for the convenience of the reader, we bring it here. Since $V e^{\psi} \in L^1(\R^2) \cap L^p(\R^2)$ for some $p>1$, by  \cite[Lem. 3.6, Lem. 3.7, Lem. 3.15]{ALN-24}, we can define 
\begin{align*}
   \phi(\bx) :=-\frac{1}{2\pi} \int_{\R^2}  (\log|\bx-\by| - \log (|\by|+1) ) \, V(\by) e^{\psi(\by)} \, \dd \by, \quad \textup{for every } \bx \in \R^2,
\end{align*}
which satisfies
\begin{align*}
\phi &\in  W^{2,p}_{\loc}(\R^2),\\
-\Delta  \phi &= V e^{\psi}, \quad \textup{weakly in } \R^2,\\
  \lim_{\bx \to \infty}  \frac{\phi(\bx)}{\log|\bx|} &= -\frac{\int_{\R^2} V e^{\psi}}{2\pi}.
\end{align*}
    Hence, $\psi - \phi$ is harmonic and bounded from above by $C \log |\bx|$ for some large enough constant $C>0$. In conclusion, $\psi-\phi$ is a constant and 
    \begin{align*}
         \lim_{\bx \to \infty}  \frac{\psi(\bx)}{\log|\bx|} = \lim_{\bx \to \infty}  \frac{\phi(\bx)}{\log|\bx|} =-\frac{\int_{\R^2} V e^{\psi}}{2\pi}.
    \end{align*}
The rest of the proof follows the same as the proof of \cite[Thm. 1]{CL-93} and we avoid repetition.

 \end{proof}

\section{Proof of existence of solutions to Liouville equation}
In this section, we prove Theorem \ref{thm:maintheorem} and Theorem \ref{thm:strongestnegativebeta}.
\begin{proof}[Proof of Theorem \ref{thm:maintheorem}]
    First, we consider the case that $-\alpha(V)<\beta <0$ and $V$ is non-negative. Then, by definition of $\alpha(V)$ and $V \in L^1_{\loc}(\R^2)$, we have 
    \begin{align}
    \label{eq:keyassumnegativebeta}
    \int_{\R^2} V(\bx) \, \left(1+|\bx|^{-2\beta}\right) \, \dd \bx < \infty.
    \end{align}
     Without loss of generality, by scaling, we can assume that $ V \geq C$ a.e. in $D(0,2) \setminus D(0,2-\delta)$ for every small enough $\delta>0.$ Define the radially symmetric regularization
\begin{align*}
    V_{\varepsilon} = \varphi_{\varepsilon} \ast \left(V 1_{D(0,1/\varepsilon)} \right) \in C^{\infty}_c(\R^2),
\end{align*}
for every $0<\varepsilon<1$ small enough such that $D(0,3) \setminus D(0,1) \subset D(0,1/\varepsilon)$, where
\begin{align*}
    \varphi_{\varepsilon}(\bx) := \frac{\varphi(\bx/\varepsilon)}{\varepsilon^2}, \quad \textup{for every } \bx \in \R^2,
\end{align*} and  $\varphi \in C^{\infty}_c(\R^2)$ is a non-negative radially symmetric function which satisfies 
\begin{align*}
    \int_{\R^2} \varphi =1.
\end{align*}
Then, $V_{\varepsilon} $ converges to $V$ pointwise and in $L^1(\R^2)$, up to a subsequence, and
\begin{equation}
\label{eq:conditiononapproxforV1}
\begin{aligned}
  \lim_{\varepsilon \to 0}  \int_{\R^2} |V(\bx)-V_{\varepsilon}(\bx)|\, |\bx|^{-2\beta}\, \dd \bx =0,\\
   V_{\varepsilon} \geq C_1, \quad \textup{in } D(0,2) \setminus D(0,2-\delta),
\end{aligned}
\end{equation}
for every $\delta>0$ small enough, which does not depend on $\varepsilon.$ Indeed, 
\begin{align*}
     |V(\bx)| \, \left(\varphi_{\varepsilon} \ast |\by|^{-2\beta}\right)(\bx)  \leq 2  |V(\bx)| \, (|\bx|^{-2\beta} +\varepsilon^{-2\beta}), \quad \textup{for every } \bx \in \R^2,
\end{align*}
and therefore, by \eqref{eq:keyassumnegativebeta} and Lebesgue's dominated convergence, we get
\begin{align*}
   \lim_{\varepsilon \to 0}\int_{\R^2}  \varphi_{\varepsilon} \ast|V| (\bx) \, |\bx|^{-2\beta} \, \dd \bx =\lim_{\varepsilon \to 0}\int_{\R^2}  |V(\bx)| \,\left(\varphi_{\varepsilon} \ast   |\by|^{-2\beta}\right)(\bx) \, \dd \bx 
   = \int_{\R^2}  |V(\bx)| \, |\bx|^{-2\beta} \, \dd \bx. 
\end{align*}
Hence, by using $|V_{\varepsilon}| \leq \varphi_{\varepsilon} \ast|V|$ and Lebesgue's dominated convergence again, we obtain
\begin{align*}
    \lim_{\varepsilon \to 0}  \int_{\R^2} |V(\bx)-V_{\varepsilon}(\bx)|\, |\bx|^{-2\beta}\, \dd \bx =0.
\end{align*}
     Let $n>2$ be an integer. Then, by \eqref{eq:keyassumnegativebeta} and Lemma \ref{lem:symmetricliouvillenegativebeta}, there exists a sequence of radially symmetric and radially increasing functions $\phi_{n,\varepsilon} \in H^1_0(D(0,n))$ which satisfy
\begin{equation}
\begin{aligned}
\label{eq:equationforphi}
- \Delta \phi_{n,\varepsilon} &= \frac{4 \pi \beta \, V_{\varepsilon} e^{\phi_{n,\varepsilon}}}{\int_{D(0,n)} V_{\varepsilon} e^{\phi_{n,\varepsilon}}}, \quad \textup{in } D(0,n),
\end{aligned}
\end{equation}
and 
\begin{align}
\label{eq:gradphibound}
       |\bx| \, |\nabla \phi_{n,\varepsilon}(\bx)| &\leq -2 \beta, \quad \textup{for every } \bx \in \R^2.
\end{align}
By integrating the above inequality, we derive that
\begin{align}
\label{eq:lipschitzbound}
    |\phi_{n,\varepsilon}(\bx) - \phi_{n,\varepsilon}(\by)| \leq -2\beta \left | \log \left ( \frac{|\bx|}{|\by|} \right ) \right |,
\end{align}
for every $\bx, \by \in D(0,n) \setminus 0.$ Define the function
\begin{align}
    \psi_{n,\varepsilon} := \phi_{n,\varepsilon} - \log\left( \int_{D(0,n)} V e^{\phi_{n,\varepsilon}} \right).
\end{align}
Then,
\begin{equation}
\label{eq:upperbound}
\begin{aligned}
     \psi_{n,\varepsilon}(\bx) &= -\log\left( \int_{D(0,n)} V_{\varepsilon}(\by) e^{\phi_{n,\varepsilon}(\by)-\phi_{n,\varepsilon}(\bx)} \, \dd \by \right) \\ &\leq -2\beta \log |\bx| - \log\left( |\bx|^{-2\beta}\int_{D(0,n) \setminus D(0, |\bx|)} V_{\varepsilon}(\by) \, \dd \by +   \int_{D(0, |\bx|)} V_{\varepsilon}(\by) |\by|^{-2\beta} \, \dd \by\right),
\end{aligned}
\end{equation}
for every $\bx \in D(0,n) \setminus 0$, where we used the property that $\phi_n$ is radially increasing and satisfies \eqref{eq:lipschitzbound}. Likewise, we obtain
\begin{equation}
\label{eq:lowerbound}
\begin{aligned}
    \psi_{n,\varepsilon}(\bx) &= -\log\left( \int_{D(0,n)} V_{\varepsilon}(\by) e^{\phi_{n,\varepsilon}(\by)-\phi_{n,\varepsilon}(\bx)} \, \dd \by \right)
    \\ &\geq -\log\left( \int_{D(0,|\bx|)} V_{\varepsilon}(\by) \, \dd \by  + |\bx|^{2\beta}  \int_{D(0,n) \setminus D(0,|\bx|)} V_{\varepsilon}(\by) |\by|^{-2\beta} \, \dd \by\right)
    \\ & \geq- 2 \beta  \log |\bx| -\log\left( |\bx|^{-2\beta} \int_{\R^2} V_{\varepsilon}(\by) \, \dd \by  +  \int_{\R^2} V_{\varepsilon}(\by) |\by|^{-2\beta} \, \dd \by\right),
\end{aligned}
\end{equation}
for every $\bx \in D(0,n) \setminus 0$. Hence, $\|\psi_{n,\varepsilon}\|_{L^p(D)}$ is uniformly bounded for every $p \geq 1$ and every disk $D \subset D(0,n)$. Moreover, by \eqref{eq:gradphibound}, we get $\|\nabla \psi_{n,\varepsilon}\|_{L^q(D)}$ is uniformly bounded for every $0<q<2$ and $D$ as before. In conclusion, by the Rellich-Kondrakov compactness theorem, we have $\psi_{n,\varepsilon}$, up to a subsequence, converges pointwise and in $L^2_{\loc}(\R^2)$ to a radially symmetric function $\psi_{\varepsilon} \in W^{1,q}_{\loc}(\R^2)$ for every $0<q<2$. By \eqref{eq:upperbound}, we have 
\begin{align*}
    0 \leq V_{\varepsilon}(\bx) e^{\psi_{n,\varepsilon}(\bx)} \leq  \frac{ V_{\varepsilon}(\bx) |\bx|^{-2\beta}}{\int_{D(0,2-\delta)} V_{\varepsilon}(\by) |\by|^{-2\beta} \, \dd \by },
\end{align*}
for every $\bx \in D(0,n) \setminus D(0,2-\delta)$ and 
\begin{align*}
   0\leq  V_{\varepsilon}(\bx) e^{\psi_{n,\varepsilon}(\bx)} \leq \frac{ V_{\varepsilon}(\bx)}{\int_{D(0,2) \setminus D(0,2-\delta)} V_{\varepsilon}(\by) \, \dd \by },
\end{align*}
for every $\bx \in D(0,2-\delta) \setminus 0$ and $\delta>0$ small enough such that $\int_{D(0,2) \setminus D(0,2-\delta)} V_{\varepsilon}(\by) \, \dd \by>0$ and $\int_{D(0,2-\delta)} V_{\varepsilon}(\by) |\by|^{-2\beta} \, \dd \by>0.$ Now, by \eqref{eq:keyassumnegativebeta}, we can apply the Lebesgue's dominated convergence to imply that
\begin{align*}
 \lim_{n \to \infty} \int_{\R^2} \left |V_{\varepsilon} e^{\psi_{\varepsilon}} - V_{\varepsilon} e^{\psi_{n,\varepsilon}} \right| =0.
\end{align*}
In conclusion, \begin{align}
\label{eq:equationforpsinegative}
    - \Delta \psi_{\varepsilon} =4 \pi \beta V_{\varepsilon} e^{\psi_{\varepsilon}},
\end{align}
weakly in $\R^2$ and 
\begin{align}
\label{eq:totalcurvature}
    \int_{\R^2}V_{\varepsilon} e^{\psi_{\varepsilon}} =\lim_{n \to \infty} \int_{\R^2} V_{\varepsilon} e^{\psi_{n,\varepsilon}} = 1.
\end{align}
Moreover, by applying the same argument as above, we can let $\varepsilon \to 0$, to obtain a radially symmetric solution $\psi \in W^{1,q}_{\loc}(\R^2)$, for every $0<q<2$, to 
\begin{equation}
\label{eq:equationforpsinegativeendingsolution}
    \begin{aligned}
&-\Delta \psi = 4 \pi \beta\, V e^{\psi}, \quad \textup{weakly in } \R^2, \\
& \int_{\R^2} V e^{\psi} =1.
    \end{aligned}
    \end{equation}
Second, to prove that $\psi \in H^1_{\loc}(\R^2) \cap C^1(\R^2 \setminus 0)$, we first consider that, by \eqref{eq:upperbound} and \eqref{eq:equationforpsinegative}, $\psi$ is radially increasing and  
\begin{align*}
\psi(r) \leq \psi(2+r) \leq -2\beta \log (1+r) +C,    
\end{align*}
 for every $r>0$, where 
 \begin{align*}
     C:=-\log \left(\int_{D(0,2)} V(\by)\, |\by|^{-2\beta} \right). 
 \end{align*}
 Then, 
 \begin{align*}
     |\psi(r)| e^{\psi(r)} \leq 1+ e^{-2\beta \log (1+r) + C},
 \end{align*}
 for every $r>0.$ In conclusion, by \eqref{eq:gradphibound} and \eqref{eq:equationforpsinegative}, we have
 \begin{equation}
 \label{eq:uniformH1bound}
\begin{aligned}
 \int_{0}^R |\partial_r \psi|^2 \, r \dd r &=r (\partial_r \psi) \psi \bigg |_{r=R} -\int_0^R \frac{1}{r} \partial_r (r \partial_r \psi) \, \psi \, r \dd r \\&\leq -2\beta(-2\beta \log (1+R) + C)- 4\pi \beta 
 \int_0^R V e^{\psi} \, |\psi| \, r \dd r
 \\ & \leq -2\beta (-2\beta \log (1+R) + C)- 2 \beta \left(1+ e^{C}\right) \int_{D(0,R)} V,
\end{aligned}
\end{equation}
for every $R>0$. Moreover, by Proposition \ref{prop:logbound}, we get $\psi \in C^1(\R^2 \setminus 0)$. Hence, by taking the integral of the 
equation in \eqref{eq:equationforpsinegativeendingsolution}, we derive that 
\begin{align*}
    \lim_{\bx \to \infty} \frac{\psi(\bx)}{\log |\bx|} = -2\beta.
\end{align*}

Finally, we take $\alpha(V)<\infty$ and $\beta := -\alpha(V)$. Consider $-\alpha(V)<\beta_i<0$ be a strictly decreasing sequence converging to $\beta$, and $\psi_{i} \in H^1_{\loc}(\R^2) \cap C^1(\R^2 \setminus 0)$ be a sequence of functions satisfying 
\begin{equation}
\label{eq:approxnegbetadecreasing}
\begin{aligned}
   & -\Delta \psi_i = 4 \pi \beta_i  V e^{\psi_i},\quad \textup{weakly in } \R^2,\\
    & \int_{\R^2}  V e^{\psi_i}  = 1,\\
    & \lim_{\bx \to \infty} \frac{\psi_i(\bx)}{\log |\bx|} = -2\beta_i,
\end{aligned}
\end{equation}
for every $i \in \N.$ We show that
\begin{align}
\label{eq:increasingproperty}
    \psi_{i+1} + \log |\beta_{i+1}| > \psi_{i} + \log |\beta_i|, \quad \textup{a.e. } \R^2,
\end{align}
for every $i \in \N.$ To see this, fix $i \in \N$ and by construction of $\psi_i$, without loss of generality, we assume that $V$ is smooth. Then, by \eqref{eq:approxnegbetadecreasing} and $\beta_i$ being strictly decreasing sequence, we note that
\begin{align}
\label{eq:increasingpropertyousidelargedisk}
    \psi_{i+1} + \log |\beta_{i+1}| > \psi_{i} + \log |\beta_i|, \quad \textup{in } \R^2 \setminus D(0,R),
\end{align}
for large enough $R>0$. Moreover, $\rho = \psi_{i+1} + \log |\beta_{i+1}|- (\psi_{i} + \log |\beta_i|)$ satisfies
\begin{align*}
   -\Delta \rho + g \rho =0, \quad \textup{in } \R^2,
\end{align*}
where 
\begin{align*}
    g:= 4\pi V \frac{e^{\psi_{i+1} + \log |\beta_{i+1}|}-e^{\psi_{i} + \log |\beta_{i}|}}{\psi_{i+1} + \log |\beta_{i+1}|- (\psi_{i} + \log |\beta_i|)} \geq 0.
\end{align*}
Hence, by applying the maximum principle for $\rho$ in $D(0,R)$; see \cite[Ch. 6.4.1, Thm. 2; Ch. 6.4.2, Thm. 4]{E-10}, we get \eqref{eq:increasingproperty}. Hence,  by \eqref{eq:upperbound}, \eqref{eq:lowerbound}, and \eqref{eq:uniformH1bound}, $\psi_{i}+\log|\beta_{i}|$ is an increasing sequence and $\|\psi_{i}+\log|\beta_{i}|\,\|_{H^{1}(D)}$ is uniformly bounded for every bounded disk $D$. Then, by Rellich-Kondrakov theorem, $\psi_{i+1}+\log|\beta_{i}|$ converges pointwise and in $L^2_{\loc}(\R^2)$ to $\psi + \log|\beta| \in H^1_{\loc}(\R^2)$ and, by monotone convergence theorem and \eqref{eq:approxnegbetadecreasing}, satisfies 
\begin{equation*}
\begin{aligned}
   & -\Delta \psi = 4 \pi \beta  V e^{\psi},\quad \textup{weakly in } \R^2,\\
    & \int_{\R^2}  V e^{\psi}  = 1,\\
    & \lim_{\bx \to \infty} \frac{\psi(\bx)}{\log |\bx|} = -2\beta,
\end{aligned}
\end{equation*}
which completes the proof for $\beta<0.$

 Now, assume that $\beta >0$, $V \in L^1_{\loc}(\R^2)$, $V \geq C$ in $D(0,R_2) \setminus D(0,R_1)$ for some constants  $C>0, R_2> R_1>0$, and 
 \begin{equation}
 \begin{aligned}\label{eq:keyassumptionpositivebeta}
& \int_{\R^2 \setminus D(0,1)} V^-(\bx) \, |\bx|^{-2\beta} \, \dd \bx < \infty,\\  
&\int_{D(0,1)} V^+(\bx) |\bx|^{-\beta-\delta} \, \dd \bx + \int_{\R^2 \setminus D(0,1)} V^+(\bx) \, |\bx|^{-\beta + \delta} \, \dd \bx< \infty,\end{aligned}
\end{equation} for some $\delta >0.$ Let us consider a fixed radially symmetric function $\psi_0 \in C^{\infty}(\R^2)$ such that $\psi_0(\bx) = 2\beta \log |\bx|$ for every $\bx \in \R^2 \setminus D(0,1)$. Defining $$f:= -\Delta \psi_0 \in C^{\infty}_c(D(0,1)),$$ we have 
\begin{align}
\label{eq:integraloff}
    \int_{\R^2} f =- \int_{D(0,1)} \Delta \psi_0 = - 2 \beta  \int_{ 0}^{2\pi} r \partial_r \log r \big |_{r= 1} = -4 \pi \beta .
\end{align}
To obtain a radially symmetric function $\psi \in H^1_{\loc}(\R^2) \cap C^1(\R^2\setminus 0)$, satisfying
\begin{align*}
  &  \int_{\R^2} |V| e^{\psi}< \infty, \, \int_{\R^2} V e^{\psi}=1,\\
   & -\Delta \psi =4 \pi \beta V e^{\psi},\quad \textup{weakly in } \R^2,
\end{align*}
 by taking $\psi_1:= \psi + \psi_0$, it is enough to find a radially symmetric function $\psi_1 \in H^1_{\loc}(\R^2) \cap C^1(\R^2 \setminus 0)$ which satisfies 
\begin{align*}
   &\int_{\R^2} |V_1| e^{\psi_1}< \infty, \, \int_{\R^2} V_1 e^{\psi_1}=1,\\
    &-\Delta \psi_1 = 4 \pi \beta V_1 e^{\psi_1}  +f, \quad \textup{weakly in } \R^2,
\end{align*}
where $V_1 := V e^{-\psi_0}.$ This approach has been introduced in \cite{M-85}, and we apply it here to obtain a variational problem which is invariant under translation on the image.
Note that, by \eqref{eq:keyassumptionpositivebeta}, we have
\begin{equation}
\begin{aligned}\label{eq:keypropertyforV}
&\int_{\R^2} V_1^-(\bx) \, \dd \bx <\infty,\\
&\int_{\R^2} V_1^+(\bx)\, e^{(\beta+\delta) |\log|\bx|\,|}\, \dd \bx <\infty.\end{aligned}
\end{equation}
Let $n \in \N$ be large enough such that $D(0,R_2) \subset D(0,n)$. First, we find a radially symmetric function $\phi_n \in H^1_0(D(0,n)) \cap C^1(D(0,n) \setminus 0)$, satisfying \begin{equation} \begin{aligned}
\label{eq:crucialbounds}
    \|\nabla \phi_n\|_{L^2(D(0,n))} & \leq C,
\\  \int_{D(0,n)} V_1 e^{\phi_n-\phi_n(1)}& \geq \frac{1}{2C},\\
 \int_{D(0,n)} |V_1| e^{\phi_n-\phi_n(1)}& \leq 2C,
 \end{aligned}
\end{equation}for a constant $C>0$ independent of $n$, and
\begin{align}
\label{eq:equationforphin}
    -\Delta \phi_n =4 \pi \beta \frac{V_1 e^{\phi_n}}{ \int_{D(0,n)} V_1 e^{\phi_n}}+ f, \quad \textup{ weakly in } D(0,n) .
\end{align}
We define $\phi_n \in H^1_0(D(0,n))$ as a minimizer of the functional \begin{align*}
\mathcal{E}_n[\varphi] :=  \frac{1}{2} \int_{D(0,n)} |\nabla \varphi|^2 - 4 \pi \beta \log \left ( \int_{D(0,n)} V_1 e^{\varphi} \right ) - \int_{D(0,n)} f \varphi,
\end{align*}
over the set $\cM_n$ which contains the radially symmetric functions $\varphi \in H^1_0(D(0,n))$ satisfying $\int_{D(0,n)} V_1 e^{\varphi} >0.$ To prove that a minimizer $\phi_n \in  \cM_n$ exists and satisfies \eqref{eq:crucialbounds}, we first
note that, since $V_1 \geq C$ in $D(0,R_2) \setminus D(0,R_1)$ for  $C>0, R_2> R_1>0$, the set $ \cM_n$ is non-empty. Now, it is enough to show that $\mathcal{E}_n[\varphi]$ is coercive and bounded over $\varphi \in  \cM_n$. To show the boundedness of $\mathcal{E}_n,$ we note that $\mathcal{E}_n[\varphi] =\mathcal{E}_n[\varphi+C] $ for every constant $C$ and $\varphi \in  \cM_n$ by \eqref{eq:integraloff}. Now, choose a radially symmetric $\varphi^M \in C^{\infty}(\R^2)$ such that $\varphi^M = M$ on $D\left(0,\frac{R_1+R_2}{2}\right) \setminus D(0,R_1)$, where $M>0$ is to be determined later, $\varphi^M(r) = -M $ for every $r > \frac{R_1+R_2}{2}$, and $\varphi^M(r)=0$ for every $r<R_1$. Then, by \eqref{eq:keypropertyforV},
\begin{align*}
  \infty> \int_{D(0,n)} V_1 e^{\varphi^M} \geq  \int_{D(0,R_1)} V_1 + e^{M} \int_{D\left(0,\frac{R_1 +R_2}{2} \right) \setminus D(0,R_1)} V_1 -e^{-M} \int_{\R^2 \setminus D(0,R_2)} |V_1|.
\end{align*}
Now, by using \eqref{eq:keypropertyforV} again, we can take $M$ large enough such that $\int_{D(0,n)} V_1 e^{\varphi^M} \geq 1$ for every $n.$ Hence, $\varphi^M \in \cM_n$ and
\begin{equation}
\begin{aligned}
\label{eq:upperboundonenergry}
\cE_n[\varphi^M]  &\leq \int_{\R^2} |\nabla \varphi^M|^2 +\int_{D(0,R_2) } |f| \, |\varphi^M|
\\ &\leq \int_{D(0,R_2)} |\nabla \varphi^M|^2 + \|f\|_{L^{\infty}(D(0,R_2))} \int_{D(0,R_2)} |\varphi^M|   < C < \infty,
\end{aligned}
\end{equation}
where $C>0$ is a constant independent of $n$, which completes the proof that the functional $\cE_n$ over $ \cM_n$ is bounded from above. Now, define the following energy functional 
\begin{align*}
    \cE_n^+[\varphi] = \frac{1}{2} \int_{D(0,n)} |\nabla \varphi|^2 - 4 \pi \beta \log \left ( \int_{D(0,n)} V^+_1 e^{\varphi} \right ) - \int_{D(0,n)}f \varphi,
\end{align*}
for every $\varphi \in  \cM_n$. Then,
\begin{align}
\label{eq:firstenergybound}
    \mathcal{E}^+_n[\varphi] \geq \frac{\varepsilon}{2} \int_{D(0,n)} |\nabla \varphi|^2 + \theta_n,
\end{align}
for every $\varphi \in  \cM_n$, where 
\begin{align*}
     \theta_n := \frac{(1-\varepsilon)}{2} \int_{D(0,n)} |\nabla \varphi|^2 -  4 \pi \beta \log \left ( \int_{D(0,n)} V^+_1 e^{\varphi}\right) -\int_{D(0,n)} f \varphi.
\end{align*} 
Next, we aim to find a bound for $\theta_n$ from below, which is independent of $\varphi$. Note that by Proposition \ref{prop:logbound}, it is obtained that
\begin{align*}
    |\varphi(r) -\varphi(1)|^2 \leq \frac{\int_{D(0,R)} |\nabla \varphi|^2}{2\pi}  |\log r|,
\end{align*}
for every $0<r\leq R\leq n.$  Hence, by using \eqref{eq:integraloff}, for every $\alpha>0$ and $\varphi \in \mathcal{M}_n$ we have
\begin{align*}
    &\theta_n = \frac{(1-\varepsilon)}{2} \int_{D(0,n)} |\nabla \varphi|^2 -  4 \pi \beta \log \left ( \int_{D(0,n)} V^+_1 e^{\varphi - \varphi(1)}\right) -\int_{D(0,n)} f (\varphi-\varphi(1))  
    \\& \geq \frac{(1-\varepsilon)}{2} \int_{D(0,n)} |\nabla \varphi|^2- 4 \pi \beta \log \left ( \int_{D(0,n)} V^+_1 e^{\left( 2\pi (\beta + \delta)\frac{(\varphi(r) - \varphi(1))^2}{\int_{D(0,n)} |\nabla \varphi|^2} +  \frac{\int_{D(0,n)} |\nabla \varphi|^2}{8\pi (\beta+\delta)} \right)}\right)
    \\ & -\frac{\alpha}{2}\int_{D(0,1)} f^2 \int_{D(0,1)} |\nabla \varphi|^2 - \frac{1}{2\alpha} \int_{D(0,1)} \frac{(\varphi(r) - \varphi(1))^2}{\int_{D(0,1)} |\nabla \varphi|^2} \\&  \geq \frac{1}{2} \left(1-\varepsilon-\frac{\beta}{\beta+\delta} - \alpha \int_{D(0,1)} f^2  \right) \int_{D(0,n)} |\nabla \varphi|^2
    - 4\pi \beta \log \left(\int_{\R^2} V^+_1 e^{(\beta +\delta)|\log r|} \right) \\& - \frac{1}{4\alpha \pi} \int_{D(0,1)} |\log |\bx| \,| \, \dd \bx.
\end{align*}
Now, since $1-\varepsilon - \frac{\beta}{\beta+\delta} >0$, we can choose 
\begin{align*}
    0<\alpha  \leq 2\frac{1-\varepsilon - \frac{\beta}{\beta+\delta}}{\int_{D(0,R_2)} f^2},
\end{align*}
to get 
\begin{align}
\label{eq:thetabound}
    \theta_n \geq - 4\pi \beta \log \left(\int_{\R^2} V^+_1 e^{(\beta+\delta) |\log r|} \right) - \frac{1}{4\alpha \pi} \int_{D(0,1)} |\log |\bx|\,| \, \dd \bx.
\end{align}
In conclusion, by \eqref{eq:firstenergybound}, we derive
\begin{align}
\label{eq:finalengergybound}
   \mathcal{E}_n[\varphi] \geq  \mathcal{E}^+_n[\varphi] \geq \frac{\varepsilon}{2} \int_{D(0,n)} |\nabla \varphi|^2  - 4\pi \beta \log \left(\int_{\R^2} V^+_1 e^{(\beta+\delta) |\log r|} \right) - \frac{1}{4\alpha \pi} \int_{D(0,1)} |\log |\bx|\,|\, \dd \bx,
\end{align}
for every $\varphi \in  \cM_n$. This concludes that $\cE_n$ is coercive over $ \cM_n$. Hence,  there exists a minimizer $\phi_n \in \cM_n$ for the functional $\mathcal{E}_n$ over $ \cM_n$. Moreover, by \eqref{eq:upperboundonenergry} and \eqref{eq:finalengergybound}, we have
\begin{align}
\label{eq:energybounds}
 -C_1 \leq \mathcal{E}^+_n[\phi_n] \leq C_1,\quad -2C_1 \leq \mathcal{E}_n[\phi_n] \leq 2C_1,\quad \int_{D(0,n)} |\nabla \phi_n|^2 \leq 2C_1,
\end{align}
where $C_1$ is a constant independent of $n$. Now, by Proposition \ref{prop:logbound} and \eqref{eq:energybounds}, it is obtained that $\phi_n \in C^1(D(0,n) \setminus 0)$ and
\begin{align}
\label{eq:pointwisebound}
    |\phi_n(r) -\phi_n(1)|^2 \leq  \frac{C_1}{\pi} |\log r|, \quad \textup{for every } 0<r<n.
\end{align}
 Hence, 
\begin{align*}
    \left|\int_{D(0,R_2)} f (\phi_n - \phi_n(1)) \right| \leq \sqrt{\frac{C_1}{\pi}} \int_{D(0,R_2)} f(\bx) \sqrt{|\log|\bx|\,|}\, \dd \bx< \infty.
\end{align*}
Then, by \eqref{eq:integraloff} and \eqref{eq:energybounds}, we obtain \eqref{eq:crucialbounds} for large enough $C$ independent of $n$. In the next step, we derive that the functions $\widetilde{\phi}_n := \phi_n- \log \left(\int_{D(0,n)} V_1 e^{\phi_n}\right)$ converge pointwise and in $L^{2}_{\loc}(\R^2)$ to $\widetilde{\phi} \in \dot{H}(\R^2)$. To see this, we use \eqref{eq:crucialbounds} and \eqref{eq:pointwisebound} to obtain
\begin{align*}
    \|\widetilde{\phi}_n\|_{W^{1,2}(\Omega)} \leq C_2,
\end{align*}
for every fixed bounded open subset $\Omega \subset D(0,n)$, where $C_2$ is a positive constant depending on $\Omega,\delta,V,R_1,R_2$. By using Rellich-Kondrakov theorem, we derive that, up to a subsequence, $\Tilde{\phi}_n$ converges pointwise and strongly in $L^2_{\loc}(\R^2)$ to $\Tilde{\phi} \in W^{1,2}_{\loc}(\R^2)$. Moreover, using \eqref{eq:crucialbounds}, we obtain that $\widetilde{\phi} \in \dot{H}(\R^2)$ and
\begin{align*}
    \|\nabla \widetilde{\phi}\|^2_{L^2(\R^2)} \leq C.
\end{align*}
In the next step, we prove that $\int_{\R^2} |V_1| e^{\widetilde{\phi}}< \infty, \int_{\R^2} V_1 e^{\widetilde{\phi}} \geq 1$, and
\begin{align}
\label{eq:finalequation}
    - \Delta \widetilde{\phi} = 4 \pi \beta\,\frac{V_1 e^{\widetilde{\phi}}}{\int_{\R^2}  V_1 e^{\widetilde{\phi}} }+f,\quad \textup{weakly in } \R^2.
\end{align}
 First, we use \eqref{eq:keyassumptionpositivebeta}, \eqref{eq:crucialbounds}, and \eqref{eq:pointwisebound} to obtain that 
\begin{align*}
      & V^+_1(\bx) e^{\widetilde{\phi}_n(\bx)} =  V^+_1(\bx) e^{\widetilde{\phi}_n(\bx)}  1_{D(0,1)}(\bx) + V^+_1(\bx) 
      e^{\widetilde{\phi}_n(\bx)} 1_{\R^2 \setminus D(0,1)}(\bx) 
    \\ & =
    \frac{1}{\int_{D(0,n)} V e^{\phi_n- \phi_n(1)}}  \left(V^+_1(\bx) e^{\phi_n(\bx)- \phi_n(1)}  1_{D(0,1)}(\bx) + V^+_1(\bx) 
      e^{\phi_n(\bx)- \phi_n(1)} 1_{\R^2 \setminus D(0,1)}(\bx)\right)
    \\ & \leq 2C  \left(e^{\frac{C_1}{4 (\beta+\delta) \pi}}  V^+_1(\bx) \,|\bx|^{-\beta-\delta} 1_{D(0,1)}(\bx)+   V^+_1(\bx) \,e^{\sqrt{\frac{C_1}{\pi} \log |\bx|}} 1_{\R^2 \setminus D(0,1)}(\bx)\right),
\end{align*}
for every $\bx \in \R^2 \setminus 0$, and, by \eqref{eq:keypropertyforV}, we get
\begin{align*}
   e^{\frac{C_1}{4 (\beta+\delta) \pi}} \int_{D(0,1)} V_1^+(\bx) \,|\bx|^{-\beta-\delta} + \int_{\R^2 \setminus D(0,1)}  V^+_1(\bx)\,e^{\sqrt{\frac{C_1}{\pi} \log |\bx|}} \, \dd \bx< \infty.
\end{align*}
Then, by Lebesgue's dominated convergence
\begin{align*}
    \lim_{n \to \infty} \int_{\R^2} V^+_1 e^{\widetilde{\phi}_n} = \int_{\R^2} V^+_1 e^{\widetilde{\phi}}.
\end{align*}
Hence, by \eqref{eq:crucialbounds} and Fatou's lemma, we obtain 
\begin{align*}
    \int_{\R^2} |V_1| \, e^{\widetilde{\phi}} < \infty, \, \int_{\R^2} V_1\, e^{\widetilde{\phi}} \geq \limsup_{n \to \infty} \int_{\R^2} V_1\, e^{\widetilde{\phi}_n} =1.
\end{align*}
Define
\begin{align*}
    \cE[\varphi] := \int_{\R^2} |\nabla  \varphi|^2 - 4\pi \beta \log \left(\int_{\R^2} V_1 e^{\varphi} \right) -\int_{\R^2} f \varphi, 
\end{align*}
for every $\varphi \in \dot{H}^1(\R^2)$ which satisfies $\int_{\R^2} |V_1| e^{\varphi}< \infty$ and $\int_{\R^2} V_1 e^{\varphi}>0$. Then,
\begin{align}
\label{eq:minimizerofenergy}
  \mathcal{E}[\widetilde{\phi}]  \leq \liminf_{n \to \infty} \mathcal{E}_n[\widetilde{\phi}_n] \leq \liminf_{n \to \infty} \mathcal{E}_n[\varphi]  \leq \mathcal{E}[\varphi],
\end{align} 
for every radially symmetric $\varphi \in C^{\infty}_c(\R^2)$ such that $\int_{D(0,n)} V_1 e^{\varphi}>0$ for $n$ large enough. Now, we can approximate $\widetilde{\phi} + t \varphi$ for a radially symmetric $\varphi \in C^{\infty}_c(\R^2)$ and small enough $t \in \R$ with radially symmetric functions $\widetilde{\varphi}_n \in C^{\infty}_c(\R^2)$ such that $\int_{D(0,n)} V_1 e^{\widetilde{\varphi}_n}>0$ for $n$ large enough and
\begin{align*} 
     \mathcal{E}[\widetilde{\phi}+t \varphi] = \lim_{n \to \infty}\mathcal{E}[ \widetilde{\varphi}_n ].
\end{align*}
Note that we used 
\begin{align*}
    \int_{\R^2} |V_1|\, \left|e^{\widetilde{\phi} + t \varphi} - e^{\widetilde{\phi}}\right| \leq \|e^{t \varphi} -1\|_{L^{\infty}(\R^2)} \int_{\R^2} |V_1|\, e^{\widetilde{\phi}},
\end{align*}
which goes to zero as $t \to 0$. Hence, by \eqref{eq:minimizerofenergy}, we conclude that 
\begin{align*}
     \mathcal{E}[\widetilde{\phi}] \leq  \mathcal{E}[\widetilde{\phi}+t \varphi],
\end{align*}
for every radially symmetric $\varphi \in C^{\infty}_c(\R^2)$ and small enough $t$, depending on $\varphi.$ In particular, $\widetilde{\phi}$ is a critical point for $\mathcal{E}$ and satisfies \eqref{eq:finalequation}. Finally, by taking $\psi_1 := \widetilde{\phi} - \log \left(\int_{\R^2} V_1 e^{\widetilde{\phi}} \right)$ and using Proposition \ref{prop:logbound}, we complete the proof.
\end{proof}

\begin{remark}
    In the case of $\beta>0$, we proved in fact a stronger property of solutions as follows: Define $\psi_0,V, \psi$ as in the proof of Theorem \ref{thm:maintheorem}, $V_1 =V e^{-\psi_0}$, and $f := -\Delta \psi_0$. Then, $\psi +\psi_0$ is a minimizer of the energy functional 
\begin{align*}
    \cE[\varphi] := \frac{1}{2} \int_{\R^2} |\nabla \varphi|^2 - 4 \pi \beta \log \left ( \int_{\R^2} V_1 e^{\varphi} \right ) - \int_{\R^2} f \varphi,
\end{align*}
among all the radially symmetric $\varphi \in \dot{H}(\R^2)$ such that $\int_{\R^2} |V_1| e^{\varphi}< \infty$, $\int_{\R^2} V_1 e^{\varphi} >0.$ Moreover, by \eqref{eq:finalengergybound}, for every  $\varphi \in \dot{H}(\R^2)$, satisfying $\int_{\R^2} |V_1| e^{\varphi}< \infty$, $\int_{\R^2} V_1 e^{\varphi} >0$, we have 
\begin{align*}
   \cE[\varphi]  \geq  \cE^+[\varphi] \geq \frac{\delta}{4(\beta+\delta)} \int_{\R^2} |\nabla \varphi|^2 + C,
\end{align*}
    where 
\begin{align*}
     \cE^+[\varphi]:=  \frac{1}{2} \int_{\R^2} |\nabla \varphi|^2 - 4 \pi \beta \log \left ( \int_{\R^2} V_1^+ e^{\varphi} \right ) - \int_{\R^2} f \varphi,
\end{align*}
and 
    \begin{align*}
        C:= -4\pi \beta \log \left( \int_{\R^2} V_1^+ e^{(\beta+\delta) |\log r|} \right) +\frac{\beta (\beta+\delta)}{\delta} \int_{D(0,1)} \log |\bx| \, \dd \bx.
    \end{align*}
\end{remark}

\begin{remark}
\label{rem:extracondiboundnegativebeta}
    Let $\beta<0$ and $V$ in Theorem \ref{thm:maintheorem} satisfy the extra condition 
\begin{align}
\label{eq:extracondiH1dot}
    \int_{\R^2 \setminus D(0,1)} V(\bx) \, |\bx|^{-2\beta}  \, \log|\bx| \, \dd \bx < \infty.
\end{align}
    Then,
\begin{align*}
    \int_{0}^{\infty} |\partial_r (\psi + \psi_0)|^2 \, r \, \dd r< \infty.
\end{align*}
To prove this, we use the integral of the equation \eqref{eq:mainequation} and radial symmetric property of $\psi$ to derive that 
\begin{equation}
\label{eq:boundandassympsi}
\begin{aligned}
&\lim_{r \to \infty} \frac{\psi(r)}{\log r} = -2\beta,\\
&\psi(r) \geq \psi(1), \quad \textup{for every } r \geq 1,\\
 &   \psi(r) \leq \psi(1) - 2\beta |\log r|, \quad \textup{for every } r >0,
\end{aligned}
\end{equation}
and, for every $r>1$, we have
\begin{align*}
    &\left|-r \partial_r \psi(r) - 2\beta\right| = -2 \beta \left |- \int_{D(0,r)} V e^{\psi} +  \int_{\R^2} V e^{\psi}\right|
     \\ &\leq  \frac{-2 \beta }{\log r} \int_{\R^2 \setminus D(0,r)} V(\bx) e^{\psi(\bx)} \log |\bx| \, \dd \bx
     \\ & \leq \frac{-2 \beta e^{\psi(1)} }{\log r} \int_{\R^2} V(\bx)\, |\bx|^{-2\beta}  \log |\bx| \, \dd \bx< \infty.
\end{align*}
Hence, by \eqref{eq:mainequation}, \eqref{eq:extracondiH1dot}, and \eqref{eq:boundandassympsi}, it is obtained that
\begin{align*}
&\int_{1}^{\infty}\left|\partial_r \left( \psi(r) + \psi_0(r) \right) \right|^2 \, r \, \dd r+ \psi(1)  (\partial_r( \psi)(1) +2\beta) \\
 &\leq\limsup_{R \to \infty}\int_{1}^{R}\left|\partial_r \left( \psi(r) + \psi_0(r) \right) \right|^2 \, r\, \dd r -r \left( \psi(r) + \psi_0(r) \right) \partial_r \left( \psi(r) + \psi_0(r) \right) \bigg |^R_{1}\\
 \\&\leq \limsup_{R \to \infty} \int_{1}^{R} -\frac{1}{r} \partial_r (r \partial_r \left( \psi(r) + \psi_0(r) \right))  \left( \psi(r) + \psi_0(r) \right) \, r \dd r \\&\leq  \limsup_{R \to \infty} 4 \pi \beta \int_{1}^{R} V(r) e^{\psi(r)} \left( \psi(r) + \psi_0(r) \right)\, r \, \dd r  \\&\leq - 4 \pi \beta  \int_{1}^{\infty}  V(r) e^{\psi(1)-2\beta \log r} (|\psi(1)| - 2\beta \log(r)) \, r \, \dd r < \infty,
 \end{align*}
which completes the proof.

\end{remark}
\begin{remark}
To obtain a uniform bound on $|\psi+\psi_0- \psi(1)|$ in Theorem \ref{thm:maintheorem}, it is sufficient to assume that 
\begin{align}
\label{eq:extracondpointwiseboud}
     \int_{\R^2 \setminus D(0,1)} |V(\bx)| \, |\bx|^{-2\beta}  \, |\,\log|\bx|\, |^{1+\varepsilon} \, \dd \bx < \infty,
\end{align}
for some $\varepsilon>0,$ where here $\beta$ can be both positive or negative. Since the idea of the proof is the same as the previous remark, we skip it here. We note that also by proof of Theorem \ref{thm:comparison}, 
\begin{align*}
    |\psi(1)| \leq \limsup_{n \to \infty}  \left|\phi_n(1)- \log \left(\int_{D(0,n)} V_1 e^{\phi_n}\right)\right| \leq \log (2C),
\end{align*}
    where $C$ is a constant depending continuously on $R_1,R_2,\delta,\beta, \int_{\R^2} V_1^+ e^{(\beta+\delta) \log r}, \int_{\R^2} V_1^-$, where $V_1 =V e^{-\psi_0}$ and $\psi_0$ is as in Theorem \ref{thm:maintheorem}.
    
\end{remark}

Now, we provide a few examples to discuss the necessity of the conditions in Theorem \ref{thm:maintheorem}.
\begin{example}
\label{ex:blowupenergy}
   We provide an example to show that $V$ becoming zero fast enough at the origin plays a key role in the proof of Theorem \ref{thm:maintheorem}. Let $\varepsilon,C,\beta$ be positive constants and $V(r)= r^{\alpha} V_0(r)$ for every $r \geq 0$, where $V_0 \in C(\R)$ is non-negative, $V_0>C$ in $D(0,\varepsilon)$, and $0 \leq \alpha < \beta -2.$ Then,
   \begin{align*}
       \inf \cE[\varphi] = \inf  \frac{1}{2} \int_{D(0,1)} |\nabla \varphi|^2 - 4 \pi \beta \log \left ( \int_{D(0,1)} V e^{\varphi} \right ) = -\infty,
   \end{align*}
where we take the infimum over the radially symmetric functions $\varphi \in H^1_0(D(0,1))$. To see this, for every $0<\delta<\varepsilon$, define 
\begin{align*}
    \varphi_{\delta}(r) := \begin{cases}
-2\beta \log r, \quad \textup{for every } \delta \leq r \leq 1,\\ 
-2\beta \log \delta + 4\pi \beta \log (\alpha+2), \quad \textup{for every } 0\leq r< \delta.
    \end{cases}
\end{align*}
It is easy to check that $\varphi_{\delta} \in H^1_0(D(0,1))$. Then, by $\alpha < \beta-2$ and $V_0>C$ in $D(0,\varepsilon)$, we have
\begin{align*}
     \cE[\varphi_{\delta}] &= -4\pi \beta \log(2\pi ) -4\pi \beta  \log \left( \delta^{-\beta}\int_0^{\delta} r^{\alpha+1} V_0(r)\, \dd r + \delta^{\beta} \int_{\delta}^1  r^{\alpha-2\beta+1} V_0(r) \, \dd r\right)
     \\ &\leq -4\pi \beta \log(2\pi C) +4\pi \beta (\beta-2-\alpha)  \log  \delta.
\end{align*}
Now, by letting $\delta \to 0$ and using $\alpha < \beta-2$, we obtain
\begin{align*}
    \lim_{\delta \to 0}  \cE[\varphi_{\delta}] = -\infty.
\end{align*}
   
\end{example}

\begin{example}
\label{ex:nonradialminimizing}
   Let $\beta>2$ and $V \in C(D(0,1))$ be non-negative, satisfying $V(x_0) >C$ for some $C>0,\bx_0 \in D(0,1)$. We show that 
    \begin{align}
    \label{eq:-inifinitynonradialminimizer}
      \inf \cE[\varphi] = \inf  \frac{1}{2} \int_{D(0,1)} |\nabla \varphi|^2 - 4 \pi \beta \log \left ( \int_{D(0,1)} V e^{\varphi} \right ) = -\infty,
   \end{align}
where the infimum is taken over $\varphi \in C^{\infty}_c(D(0,1))$. Without loss of generality, consider $V \geq C$ in $D(\bx_0,r) \subset D(0,1)$ for $r,C>0, \bx_0 \in D(0,1).$ Then,
\begin{align*}
      \cE[\varphi] \leq \frac{1}{2} \int_{D(\bx_0,r)} |\nabla \varphi|^2 - 4 \pi \beta \log \left ( \int_{D(\bx_0,r)}  e^{\varphi} \right ) - 4 \pi \beta \log C,
\end{align*}
for every $\varphi \in C^{\infty}_c(D(\bx_0,r)).$ Hence, by defining $\widetilde{\varphi}(\bx) := \varphi(\bx-\bx_0)$ for every $\bx \in \R^2$, we have
\begin{align*}
      \cE[\varphi] \leq \frac{1}{2} \int_{D(0,r)} |\nabla \widetilde{\varphi}|^2 - 4 \pi \beta \log \left ( \int_{D(0,r)}  e^{\widetilde{\varphi}} \right ) - 4 \pi \beta \log C.
\end{align*}
Finally, by taking the infimum over radially symmetric $\widetilde{\varphi} \in C^{\infty}_c(D(0,r))$ and using Example \eqref{ex:blowupenergy}, we get \ref{eq:-inifinitynonradialminimizer}.

\end{example}

\begin{example}
\label{ex:regularityexample}
    The following proves that the regularity in Theorem \ref{thm:maintheorem} is sharp. Define 
\begin{align}
    \psi(r) = \begin{cases}
-\frac{1}{2} \log(-\log r), \quad &\textup{for } r \leq \alpha,\\
-\frac{1}{2 \log \alpha } \log r + \frac{1}{2}-\frac{1}{2} \log(-\log \alpha), \quad &\textup{for } r > \alpha,
    \end{cases}
\end{align}
for $0<\alpha<1.$
    Then, $\psi \in H^1_{\loc}(\R^2) \cap C^{1}(\R^2 \setminus 0)$ and satisfies 
    \begin{equation}
    \begin{aligned}
    \label{eq:exeqaution}
        &-\Delta \psi = 4\pi \beta V e^{\psi}, \quad \textup{weakly in } \R^2, \\
        &\int_{\R^2} V e^\psi =1,
    \end{aligned}
    \end{equation}
    where $\beta = \frac{1}{4\log \alpha}$ and 
    \begin{align*}
        V(r) = -\frac{1}{8\pi \beta} \frac{1_{[0,\alpha]}}{r^2 (-\log r)^{\frac{3}{2}}}, \quad \textup{for every } r>0.
    \end{align*}
Note that $V \in L^1(\R^2)$ is non-negative and, by changing $\alpha$, $\beta$ can take any negative values. This gives an example where $\psi$ blows up at the origin and is continuously differentiable in $\R^2 \setminus 0.$

For the case of $\beta>0$, let $M>0$ to be chosen later. We choose a radially symmetric function $\psi \in H^1_{\loc}(\R^2) \cap C^{1}(\R^2 \setminus 0)$ such that
\begin{align}
-\Delta \psi(r) = \begin{cases}
-\frac{1}{2 r^2 (\log r)^2}, \quad &r \leq \frac{1}{2},\\
-\frac{2}{ (\log 2)^2} + M \left(r-\frac{1}{2}\right) (1 - r), \quad &\frac{1}{2}\leq r \leq 1,\\
0, \quad &r \geq 1.    
\end{cases}
\end{align}
Then, by taking $M$ large enough, we have $\psi$ satisfies \eqref{eq:exeqaution}, where $\beta := \frac{1}{4\pi} \int_{\R^2} -\Delta \psi >0$, $V:=- \frac{1}{4\pi \beta} e^{-\psi} \Delta \psi  \in 
L^1(\R^2)$ is positive somewhere in $\R^2$, and 
\begin{align*}
   & \int_{\R^2} V e^{\psi}=1,\\
   & \int_{\R^2} |V|\, e^{\psi} < \infty.
\end{align*}
 Moreover, since $-\Delta \psi =0$ for $r\geq 1,$ we derive that $r\partial_r\psi = C$ for $r \geq 1,$ where $C$ is a constant, satisfying
\begin{align*}
    C= \partial_r\psi(1) =\frac{1}{2\pi} \int_{\R^2} \Delta \psi = -2 \beta.
\end{align*}
Hence, by choosing $\psi_0 \in C^{\infty}(\R^2)$ such that $\psi(r) = 2\beta \log r$ for $r \geq 1$, we obtain that 
\begin{align*}
    \int_{\R^2} |\nabla (\psi + \psi_0)|^2 < \infty.
\end{align*}

\end{example}

Now, we prove Corollary \ref{cor:existencepositiveassymdecay}.

\begin{proof}[Proof of Corollary \ref{cor:existencepositiveassymdecay}]
If $n > \beta -2$, then $r^n V(r)$ satisfies the conditions in Theorem \ref{thm:maintheorem} and there exists a radially symmetric $\psi \in C^{0,\alpha}_{\loc}(\R^2)$ for some $\alpha>0$, by Proposition \ref{prop:regularity}, which satisfies \eqref{eq:radialLiouvillesol}. Moreover, 
by taking the integration of \eqref{eq:radialLiouvillesol}, we have 
\begin{align*}
   \lim_{r \to \infty} -r \partial_r \psi(r) = -2\beta. 
\end{align*}
Hence,
\begin{align*}
    \lim_{\bx \to \infty} \frac{\psi(\bx)}{\log|\bx|} = -2\beta.
\end{align*}
To prove the other direction, we use the Pokhozhaev identity and an argument in \cite{CL-93}. It is enough to consider that $\beta>0$ and $\psi \in L^1_{\loc}(\R^2)$ is a function such that \eqref{eq:equationexistencepositvedecay} holds. Then, by \eqref{eq:uniformbound} for $\varepsilon>0$  small enough and $p = 1+\varepsilon$, we have
\begin{align*}
    \int_{\R^2} |\bx|^{np} V^p(\bx) e^{p\psi(\bx)}\, \dd \bx \leq C  \int_{1}^{\infty} |r|^{np+1} V^p(r) r^{p (-2\beta + \varepsilon) } \, \dd r < \infty,
 \end{align*}
 for some $C>0$. Hence, by \cite[Lem. 3.6, Lem. 3.7, Lem. 3.15]{ALN-24} and the Sobolev embedding theorem, we have
 \begin{align*}
   \phi(\bx) :=-2\beta \int_{\R^2}  (\log|\bx-\by| - \log (|\by|+1) ) \, |\by|^{n} V(\by) e^{\psi(\by)} \, \dd \by,
\end{align*}
is a well-defined function, which satisfies
\begin{equation}
\label{eq:keypropphi}
\begin{aligned}
\phi &\in  W^{2,p}_{\loc}(\R^2),\\
-\Delta  \phi &= V e^{\psi}, \quad \textup{weakly in } \R^2,\\
  \lim_{\bx \to \infty}  \frac{\phi(\bx)}{\log|\bx|} &= -2\beta.
\end{aligned}
\end{equation}
In conclusion, $\psi - \phi$ is harmonic and, by \eqref{eq:equationexistencepositvedecay}, is bounded from above by $5\beta \log |\bx|+ C$ for a constant $C$ and $|\bx| \geq 1$. Thus, $\psi - \phi$ must be a constant. In the next step, we prove that 
\begin{align}
\label{eq:assymptoticradialtheta}
    \lim_{r \to \infty} r \partial_r \psi = -2\beta,\, \lim_{r \to \infty}  \partial_{\theta} \psi = 0,
\end{align}
where $(r,\theta)$ is the polar coordinate on $\R^2.$ As in \cite[Lem. 1.3]{CL-93}, note that, since $\psi - \phi$ is a constant, we obtain
\begin{align*}
   & \bx \cdot \nabla \psi(\bx) +2\beta =  -2\beta  \int_{\R^2}  \frac{\by \cdot (\bx-\by)}{|\bx-\by|^2} \, |\by|^{n} V(\by) e^{\psi(\by)} \, \dd \by,\\
    &\partial_{\theta} \psi(\bx) = 2\beta \int_{\R^2}  \frac{\by^{\perp} \cdot (\bx-\by)}{|\bx-\by|^2} \, |\by|^{n} V(\by) e^{\psi(\by)} \, \dd \by,
\end{align*}
for every $\bx \in \R^2,$ where $\by^{\perp} =(-y_2,y_1)$ for every $\by = (y_1,y_2) \in \R^2.$ Hence, to prove \eqref{eq:assymptoticradialtheta}, it is sufficient to demonstrate 
\begin{align}
\label{eq:keylimit}
  \lim_{\bx \to \infty}  \int_{\R^2}  \frac{|\by|^{n+1} V(\by)}{|\bx-\by|} \,  e^{\psi(\by)} \, \dd \by =0.
\end{align}
To show this, we write
\begin{align*}
   & \int_{\R^2}  \frac{|\by|^{n+1} V(\by)}{|\bx-\by|} \,  e^{\psi(\by)} \, \dd \by\\& = \int_{|\bx-\by| \geq \frac{|\bx|}{2}}  \frac{|\by|^{n+1} V(\by)}{|\bx-\by|} \,  e^{\psi(\by)} \, \dd \by
   + \int_{|\bx-\by| <\frac{|\bx|}{2}}  \frac{|\by|^{n+1} V(\by)}{|\bx-\by|} \,  e^{\psi(\by)} \, \dd \by = I_1 +I_2,
\end{align*}
for every $\bx \in \R^2$ and prove that each term $I_1,I_2$ converges to zero as $\bx$ goes to infinity. To estimate $I_1$, for a fixed $M>0$, we have
\begin{align*}
 \limsup_{\bx \to \infty}  I_1 &= \limsup_{\bx \to \infty}  \int_{|\bx-\by| \geq \frac{|\bx|}{2}, |\by| \leq M}  \frac{|\by|^{n+1} V(\by)}{|\bx-\by|} \,  e^{\psi(\by)} \, \dd \by +\int_{|\bx-\by| \geq \frac{|\bx|}{2}, |\by| > M}  \frac{|\by|^{n+1} V(\by)}{|\bx-\by|} \,  e^{\psi(\by)} \, \dd \by
 \\ & \leq  3 \int_{ |\by| > M}  |\by|^{n} V(\by) \,  e^{\psi(\by)} \, \dd \by.
\end{align*}
Since $M$ is arbitrary and $\int_{ \R^2}  |\by|^{n} V(\by) \,  e^{\psi(\by)} \, \dd \by=1,$ we obtain $\lim_{\bx \to \infty}  I_1=0.$ For the term $I_2$, we note that $|\bx -\by| < \frac{|\bx|}{2}$, for $\bx,\by \in \R^2$, implies 
\begin{align*}
         \frac{|\bx|}{2} <|\by| < \frac{3 |\bx|}{2}.
\end{align*}
Moreover, by $\psi- \phi$ being a constant and \eqref{eq:keypropphi}, we get 
\begin{align}
\label{eq:epsiboundabove}
    e^{\psi(\by)} \leq C_1 (1+|\by|)^{-2\beta +\delta},
\end{align}
 for every $\delta>0,\by \in \R^2$ and a constant $C_1>0$ independent of $\by$. Hence, since $V$ is a decreasing function, we conclude
\begin{align*}
    I_2 \leq C_2 V\left(\frac{\bx}{2} \right)  |\bx|^{n+2-2\beta+\delta},
\end{align*}
for every $\bx \in \R^2 \setminus D(0,1)$. Finally, by \eqref{eq:uniformbound2} and $V$ being decreasing, we have 
\begin{align}
\label{eq:assyplimitforV}
  \limsup_{r \to \infty}  V\left(\frac{r}{2} \right)  |r|^{n+2-2\beta+\delta} \leq  \limsup_{r \to \infty} \int_{D(0,r/2) \setminus D(0,r/4)}  t^{n+1-2\beta+\delta} V(t) \, \dd t=0,
\end{align}
which completes the proof of \eqref{eq:keylimit}. Now, by multiplying the equation \eqref{eq:equationexistencepositvedecay} with $\bx \cdot \nabla \psi$ and taking the integration on $D(0,M)$, for every $M>0$, we derive
\begin{align*}
  \frac{1}{4\pi \beta}  \int_{\partial D(0,M)}  r \left( \frac{|\nabla \psi|^2}{2} -|\partial_r \psi|^2 \right)
     &= -\int_{D(0,M)} e^{\psi(\bx)} \bx \cdot \nabla \left(|\bx|^n V(\bx) \right) \, \dd \bx\\& - 2 + \int_{\partial D(0,M)} |\bx|^n V(\bx) e^{\psi(\bx)} \, \dd \bx.
\end{align*}
Note that by \eqref{eq:epsiboundabove} and \eqref{eq:assyplimitforV}, we obtain 
\begin{align*}
    \lim_{M \to \infty}  \int_{\partial D(0,M)} |\bx|^n V(\bx) e^{\psi(\bx)} \, \dd \bx =0,
\end{align*}
and, by \eqref{eq:assymptoticradialtheta}, we imply that 
\begin{align*}
 \lim_{M \to \infty} \frac{1}{4\pi \beta}  \int_{\partial D(0,M)}  r \left( \frac{|\nabla \psi|^2}{2} -|\partial_r \psi|^2 \right) = - \beta.
\end{align*}
Hence, by $V$ being radially decreasing, we arrive at
\begin{align*}
\beta -2 &= \int_{\R^2}e^{\psi(\bx)} \bx \cdot \nabla \left(|\bx|^n V(\bx) \right)  \, \dd \bx 
\\& =n  + \int_{\R^2}(\partial_r V(r)) |\bx|^{n+1}   e^{\psi(\bx)} \, \dd \bx
< n,
\end{align*}
which completes the proof. 
\end{proof}

Now, we prove Theorem \ref{thm:strongestnegativebeta}

\begin{proof}[Proof of Theorem \ref{thm:strongestnegativebeta}]
Assume that $-\beta < \alpha(V)$. Define the regularization
\begin{align*}
    V_{\varepsilon} = \varphi_{\varepsilon} \ast \left(V 1_{D(0,1/\varepsilon)} \right) \in C^{\infty}_c(\R^2),
\end{align*}
for every $0<\varepsilon<1$ small enough such that $D \subset D(0,1/\varepsilon)$, where $\varphi_{\varepsilon}(\bx) =: \frac{\varphi(\bx/\varepsilon)}{\varepsilon^2}$ for every $\bx \in \R^2$ and $\varphi \in C^{\infty}_c(\R^2)$ is a non-negative radially symmetric function which satisfies 
\begin{align*}
    \int_{\R^2} \varphi =1.
\end{align*}
Then, $V_{\varepsilon} $ converges to $V$ pointwise and in $L^1_{\loc}(\R^2)$, up to a subsequence, and
\begin{equation}
\label{eq:conditiononapproxforV}
\begin{aligned}
  \lim_{\varepsilon \to 0}  \int_{\R^2} |V(\bx)-V_{\varepsilon}(\bx)|\, |\bx|^{-2\beta}\, \dd \bx =0,\\
   V_{\varepsilon} \geq C_1, \quad \textup{in } D(\bx_0,r_0),
\end{aligned}
\end{equation}
for some $\bx_0 \in \R^2, r>0.$ Consider $\widetilde{W}_1 \in C_{c}^{\infty}(D(0,r_0/2)) \setminus 0$ as a non-negative radially symmetric function, such that
\begin{align*}
    \widetilde{W}_1(\bx) \leq C_1 \leq V_{\varepsilon}(\bx_0 + \bx),\quad \textup{for a.e. } \bx \in \R^2, \, 0<\varepsilon<1,
\end{align*}
  Then, by Theorem \ref{thm:maintheorem}, there exists radially symmetric function $\widetilde{\psi}_{1} \in C^{\infty}(\R^2)$ such that 
\begin{align*}
 &-\Delta \widetilde{\psi}_{1} =-4 \pi  \widetilde{W}_{1} e^{\widetilde{\psi}_{1}}, \quad \textup{weakly in } \R^2,\\
 &\int_{\R^2}\widetilde{W}_{1} e^{\widetilde{\psi}_{1}}=1.
    \end{align*}
Define $\psi_{1}(\bx) := \widetilde{\psi}_{1}(\bx-\bx_0)-\log |\beta|, W_{1}(\bx) := \widetilde{W}_{1}(\bx-\bx_0)$ for every $\bx \in \R^2.$ Then,
\begin{equation}
\label{eq:approxsupersol}
\begin{aligned}
 &-\Delta \psi_{1} =4 \pi \beta W_{1} e^{\psi_{1}}, \quad \textup{weakly in } \R^2,\\
 &\int_{\R^2}W_{1} e^{\psi_{1}}=1.
    \end{aligned}
\end{equation}
Hence, by Lemma \ref{lem:degreeformula}, we derive
\begin{align}
\label{eq:lowerboundpsi1}
  -2\beta \log( |\bx|+1)+ \log |\beta| -C_2 \leq  \psi_{1}(\bx) \leq -2\beta \log( |\bx|+1)+\log |\beta| +C_2,
\end{align}
for every $\bx \in \R^2$, where $C_2>0$ is a constant depending on $D,C_1$. Now, for every $\bx \in \R^2$, define
\begin{align*}
    \psi_{2,\varepsilon}(\bx) := -2\beta \frac{\int_{\R^2} (\log |\bx - \by| - \log (|\by|+1)) V_{\varepsilon}(\by) (|\by|+1)^{-2\beta}  \, \dd \by}{\int_{\R^2} V_{\varepsilon}(\by) (|\by|+1)^{-2\beta} \, \dd \by }  - C_3,
\end{align*}
where 
\begin{align*}
    C_{3} := \sup_{0<\varepsilon<1}\max \left(-\log |\beta|-C_2,\log\left(\int_{\R^2} V_{\varepsilon}(\by) (|\by|+1)^{-2\beta} \, \dd \by\right)\right).
\end{align*}
Note that, by \cite[Lem. 3.7]{ALN-24}, we have $\psi_{2,\varepsilon}$ belongs to $C^{\infty}(\R^2)$ and satisfies
\begin{align*}
    -\Delta \psi_{2,\varepsilon}(\bx) = 4\pi \beta \frac{ V_{\varepsilon}(\bx) (|\bx|+1)^{-2\beta} }{\int_{\R^2} V_{\varepsilon}(\by) (|\by|+1)^{-2\beta} \, \dd \by }, \quad \textup{for every } \bx \in \R^2.
\end{align*}
Moreover, by using the inequality $|\bx-\by| \leq (|\bx|+1) (|\by| +1)$ for every $\bx,\by \in \R^2$, we have 
\begin{align}
\label{eq:upperboundpsi2}
   \psi_{2,\varepsilon}(\bx) \leq -2\beta \log (|\bx|+1)- C_3, \quad \textup{for every } \bx \in \R^2.
\end{align}
Hence, by \eqref{eq:lowerboundpsi1}, we obtain
\begin{align*}
&\psi_1 \geq \psi_{2,\varepsilon},\quad \textup{in } \R^2,\\
    \Delta \psi_{2,\varepsilon} +4\pi \beta V_{\varepsilon} e^{\psi_{2,\varepsilon}} &\geq 0 \geq \Delta \psi_{1} + 4\pi \beta V_{\varepsilon} e^{\psi_{1}}, \quad \textup{in } \R^2.
\end{align*}
In conclusion, by \cite[Thm. 2.10]{N-82}, there exists a function $\psi_{\varepsilon,\beta} \in C^{\infty}(\R^2)$
such that 
\begin{equation}
\label{eq:approxequanegabetageneral}
\begin{aligned}
\psi_1 &\geq \psi_{\varepsilon,\beta} \geq \psi_{2,\varepsilon}, \quad \textup{in } \R^2,\\
    -\Delta \psi_{\varepsilon,\beta} &=4 \pi \beta \,V_{\varepsilon} e^{\psi_{\varepsilon,\beta}}, \quad \textup{in } \R^2.
\end{aligned}
\end{equation}
Note that, by Lemma \ref{lem:degreeformula}, we have 
\begin{equation}
\label{eq:assymapproxsolepsilon}
\begin{aligned}
    &\int_{\R^2} V_{\varepsilon} e^{\psi_{\varepsilon,\beta}} = -\frac{1}{2 \beta}  \lim_{\bx \to \infty} \frac{\psi_{\varepsilon,\beta}(\bx)}{\log |\bx|} 
    \\ =& -\frac{1}{2 \beta}  \lim_{\bx \to \infty} \frac{\psi_{2,\varepsilon}(\bx)}{\log |\bx|} = -\frac{1}{2 \beta}  \lim_{\bx \to \infty} \frac{\psi_{1}(\bx)}{\log |\bx|} = 1. 
\end{aligned}
\end{equation}

Then, by the same argument as in \cite[Lem. 3.6]{ALN-24}, we have $\|\psi_{2,\varepsilon}\|_{L^1(\Omega)}$ is uniformly bounded for bounded subsets $\Omega \subset \R^2.$ Now, by using \eqref{eq:approxequanegabetageneral}, we have $\|\psi_{\varepsilon,\beta}\|_{L^1(\Omega)}$ is uniformly bounded for every bounded subset $\Omega \subset \R^2.$ Then, by \eqref{eq:approxequanegabetageneral} and a standard $L^1$-elliptic regularity result; see \cite{S-65}, we have $\|\psi_{\varepsilon,\beta} \|_{W^{1,p}(\Omega)}$ is uniformly bounded for every $1\leq p<2$ and bounded open subset $\Omega \subset \R^2$. Hence, by Rellich-Kondrakov theorem, $\psi_{\epsilon,\beta}$ converges to $\psi_{\beta} \in W^{1,p}_{\loc}(\R^2)$ pointwise and in $L^2_{\loc}(\R^2)$, up to a subsequence, for every $1 \leq p<2$ as $\varepsilon$ converged to zero. Moreover, by \eqref{eq:approxequanegabetageneral}, $\|e^{\psi_{\varepsilon,\beta}}\|_{L^{\infty}(\Omega)}$ is uniformly bounded on bounded subsets $\Omega \subset \R^2$ and $V_{\varepsilon}$ converges strongly in $L^1_{\loc}(\R^2)$ to $V$. In conclusion, by $\beta> -\alpha(V)$, \eqref{eq:lowerboundpsi1}, \eqref{eq:approxequanegabetageneral}, and Lebesgue's dominated convergence, we have
\begin{equation}
\label{eq:approxequanegabeta}
\begin{aligned}
& \psi_{\beta} \leq -2\beta \log( |\bx|+1)+\log |\beta| +C_2, \quad \textup{a.e. in } \R^2,\\
   & -\Delta \psi_{\beta} =4 \pi \beta \,V e^{\psi_{\beta}}, \quad \textup{weakly in } \R^2,\\
   & \int_{\R^2} V e^{\psi_{\beta}} =1.
\end{aligned}
\end{equation}
Now, we claim that \begin{align*}
    \psi_{\beta_2} + \log|\beta_2| < \psi_{\beta_1} + \log|\beta_1|, \quad \textup{a.e. in } \R^2,
\end{align*}
for every $\beta_2 > \beta_1 > -\alpha(V)$. This is followed by an application of the strong maximum principle; see \cite[Ch. 6.4.1, Thm. 2; Ch. 6.4.2, Thm. 4]{E-10}. To see this, let $\beta_2 > \beta_1 > -\alpha(V)$. Now, it is sufficient to prove that 
\begin{align*}
   \phi_2:= \psi_{\varepsilon,\beta_2} + \log|\beta_2| <\phi_1 := \psi_{\varepsilon,\beta_1} + \log|\beta_1|, \quad \textup{in } \R^2,
\end{align*}
for every $0<\varepsilon<1$. To prove this, we use \eqref{eq:approxequanegabetageneral} and \eqref{eq:assymapproxsolepsilon} to derive that,
for $R>0$ large enough and  $\phi := \phi_1- \phi_2$,
\begin{align*}
   \phi>0, \quad \textup{in } \partial D(0,R),
\end{align*}
and 
\begin{align*}
  -\Delta \phi +f  \phi \geq 0, \quad \textup{in } D(0,R),
\end{align*}
where
\begin{align*}
    f := 4\pi V_{\varepsilon} \frac{e^{\phi_1}-e^{\phi_2}}{\phi_1- \phi_2} \geq 0, \quad \textup{in } \R^2.
\end{align*}
Then, by the strong maximum principle; see \cite[Ch. 6.4.1, Thm. 2; Ch. 6.4.2, Thm. 4]{E-10}, we have 
\begin{align*}
    \phi>0, \quad \textup{in } D(0,R).
\end{align*}
By taking $R \to \infty$, we conclude the claim. Finally, if $\alpha(V) <\infty,$ we have $\psi_{\beta}+ \log |\beta|$ increases as $\beta>\alpha(V)$ decreases to $\alpha(V)$ and 
\begin{align*}
    \psi_{\beta}(\bx) \leq -2\alpha(V) \log( |\bx|+1)+\log |\alpha(V)| +C_2, \quad \textup{for every }  \bx \in \R^2.
\end{align*}
Then, by using \eqref{eq:approxequanegabeta} and a standard $L^1$-elliptic regularity result; see \cite{S-65}, as before, $\psi_{\beta}$ converges pointwise and in $L^2_{\loc}(\R^2)$ to $\psi_{\alpha(V)}$ as $\beta$ decrease to $\alpha(V)$. In conclusion, by using \eqref{eq:approxequanegabeta} again, together with the monotone convergence theorem, we conclude that 
\begin{align*}
    & \psi_{\alpha(V)} \leq -2 \alpha(V) \log( |\bx|+1)+\log |\alpha(V)| +C_2, \quad \textup{a.e. in } \R^2,\\
   & -\Delta \psi_{\alpha(V)} =4 \pi \alpha(V)\,  V e^{\psi_{\alpha(V)}}, \quad \textup{weakly in } \R^2,\\
   & \int_{\R^2} V e^{\psi_{\alpha(V)}} =1,
\end{align*}
which completes the proof,

\end{proof}

\section{Proof of uniqueness of solutions to Liouville equation}
In this section, we prove Theorem \ref{thm:comparison} and Theorem \ref{thm:uniquenessposistivbeta}.
\begin{proof}[Proof of Theorem \ref{thm:comparison}]
Let $$\inf_{\R^2} \psi_1 - \psi_2 <0.$$ We try to drive a contradiction. First, assume that $\psi_1(0) > \psi_2(0)$. Then, by continuity of $\psi_1,\psi_2$, there exists $\varepsilon>0$ such that $\psi_1 > \psi_2$ on $\overline{D(0,\varepsilon)}$. By scaling the equations of $\psi_1,\psi_2,$ we can assume that $\varepsilon >2.$ Then, by the asymptotic properties of $\psi_1,\psi_2$; see \eqref{eq:aymptoticcondition}, there exists $x_m \in \R^2 \setminus \overline{D(0,\varepsilon)}$ such that
\begin{align}
   \frac{\psi_1(\bx_m) - \psi_2(\bx_m)}{\log(|\bx_m|-1)} = \inf_{\bx \in \R^2 \setminus D(0,2)} \frac{\psi_1(\bx) - \psi_2(\bx)}{\log(|\bx|-1)} < 0.
\end{align}
Hence,
\begin{align*}
   &(i).\quad \psi_1(\bx_m) - \psi_2(\bx_m) <0, \\& (ii).\quad \nabla \left( \frac{\psi_1(\bx) -\psi_2(\bx)}{\log(|\bx|-1)} \right)\bigg |_{\bx=\bx_m} =0,\\&(iii).\quad\Delta \left( \frac{\psi_1(\bx) -\psi_2(\bx)}{\log(|\bx|-1)} \right)\bigg |_{\bx=\bx_m} \geq 0.
\end{align*}
By $(ii)$, we obtain
\begin{align*}
    \nabla (\psi_1 -\psi_2)(\bx_m) = -(\psi_1 - \psi_2)(\bx_m) \log(|\bx_m|-1) \nabla \left( \frac{1}{\log(|\bx|-1)} \right) \bigg |_{\bx=\bx_m}.
\end{align*}
Hence, by using $(i), (iii)$, we derive that 
\begin{align*}
    0 &\leq  \Delta \left( \frac{\psi_1(\bx) -\psi_2(\bx)}{\log(|\bx|-1)} \right)\bigg |_{\bx=\bx_m}  \\&=  \frac{\Delta (\psi_1(\bx) -\psi_2(\bx))}{\log(|\bx|-1)}   \bigg |_{\bx=\bx_m} \\&+  (\psi_1(\bx) -\psi_2(\bx)) \,\Delta \left(\frac{1}{\log(|\bx|-1)}\right) \bigg |_{\bx=\bx_m} \\ &+ 2 \nabla (\psi_1(\bx) -\psi_2(\bx)) \cdot \nabla \left(\frac{1}{\log(|\bx|-1)}\right) \bigg |_{\bx=\bx_m}
    \\ &\leq  \frac{  F_1(\bx_m,\psi_1(\bx_m)) - F_2(\bx_m,\psi_2(\bx_m))}{\log(|\bx_m|-1)}\\ &+  \frac{ (\psi_1 -\psi_2)(\bx_m)}{|\bx_m| (|\bx_m|-1)^2(\log(|\bx_m|-1))^2 } < 0,
  \end{align*}
  which is a contradiction. Note that, we used \eqref{eq:comparisonassum} on the last inequality above. Now, assume that $\psi_1(0) = \psi_2(0)$. Then, $\psi_{\varepsilon} := \psi_1 + \varepsilon$ for every $\varepsilon>0$ satisfies
\begin{align*}
    -\Delta \psi_{\varepsilon}(\bx) +F_{\varepsilon}(\bx,\psi_{\varepsilon}(\bx)) \geq -\Delta \psi_2(\bx) +F_2(\bx,\psi_2(\bx)),
\end{align*}
for $\bx \in \R^2$, where $F_{\varepsilon}(\bx,r)=  F_1(\bx,r-\varepsilon)$ for $\bx \in \R^n, r \in \R.$ Moreover, $\psi_{\varepsilon} (0) > \psi_2(0)$,
 \begin{align*}
        F_{2}(\bx,r_2) \geq F_{\varepsilon}(\bx,r_1),    \end{align*}
for every $\bx \in \R^2, r_2 \geq r_1,$ and
\begin{align*}
    \liminf_{\bx \to \infty} \frac{\psi_{\varepsilon}(\bx) - \psi_2(\bx)}{ \log|\bx|} \geq 0.
\end{align*}
  In conclusion, by the previous case, we deduce that $\psi_{\varepsilon} \geq \psi_2$ in $\R^2$. Now, letting $\varepsilon$ converge to zero, we derive that $\psi_1 \geq \psi_2$ in $\R^2$. The final case is that $\psi_1(0) < \psi_2(0)$. Now, let $\bx_0 \in \R^2$ be a fixed point. Define the functions $\phi_1(\bx) := \psi_1(\bx+\bx_0)$ and $\phi_2(\bx) := \psi_2(\bx+\bx_0)$ for $\bx \in \R^2$. Then, 
\begin{align*}
    -\Delta \phi_2(\bx) + \widetilde{F}_2(\bx,\phi_2(\bx)) \leq -\Delta \phi_1(\bx) + \widetilde{F}_1(\bx,\phi_1(\bx)),
\end{align*}
  for every $\bx \in \R^2$, where $\widetilde{F}_i(\bx,r)= F_i(\bx+\bx_0,r)$ for $\bx \in \R^2, r \in \R,i=1,2.$ Since
  \begin{align*}
      \widetilde{F}_2(\bx,r_2) \geq \widetilde{F}_1(\bx,r_1),
  \end{align*}
  for every $\bx \in \R^2, r_1,r_2 \in \R$ satisfying $r_2 \geq r_1$,
  and 
\begin{align*}
    \liminf_{\bx \to \infty}  \frac{\phi_1(\bx) - \phi_2(\bx)}{ \log|\bx|} \geq 0, 
\end{align*}
by the previous steps, we conclude that either $\phi_1 \geq \phi_2$ in $\R^2$ or $\phi_1(0) < \phi_2(0).$ Hence, by $\phi_1(-\bx_0)=\psi_1(0) < \psi_2(0)= \phi_2(-\bx_0)$, we obtain that $$ \psi_1(\bx_0)=\phi_1(0)  < \phi_2(0) = \psi_2(\bx_0).$$ Since $\bx_0$ is arbitrary, we conclude that 
\begin{equation}
\label{eq:initialcomparison}
\begin{aligned}
    &\psi_1 < \psi_2,\quad \textup{in } \R^2,\\
    &\Delta (\psi_2 -\psi_1) \geq 0,\quad \textup{in } \R^2,\\
   & \lim_{x \to \infty}  \frac{\psi_1(\bx)-\psi_2(\bx)}{\log|\bx|} =0.
\end{aligned}
\end{equation}
 Now, take arbitrary $\delta >0, \varepsilon>0,$ and $\bx_0 \in \R^2$. Then, by \eqref{eq:initialcomparison}, we have
\begin{align*}
    \Delta (\psi_2 -\psi_1 - \varepsilon \log(|\bx-\bx_0|-\delta)) \geq \frac{\delta \varepsilon }{|\bx-\bx_0|(|\bx-\bx_0|-\delta)^2}>0,
\end{align*}
for every $\bx \in \R^2 \setminus D(\bx_0,2\delta)$. Note that by \eqref{eq:initialcomparison}, we get 
\begin{align*}
    \lim_{\bx \to \infty} \psi_2(\bx) -\psi_1(\bx) - \varepsilon \log(|\bx|-\delta) = - \infty.
\end{align*}
Hence, by the maximum principle, we derive that 
\begin{align*}
   \sup_{\R^2 \setminus D(\bx_0,2\delta)} \psi_2 -\psi_1 - \varepsilon \log(|\bx-\bx_0|-\delta) = \sup_{\partial D(\bx_0,2\delta)} \psi_2 -\psi_1 - \varepsilon \log(|\bx-\bx_0|-\delta).
\end{align*}
In conclusion, by letting first $\varepsilon \to 0$ and then $\delta \to 0$, we arrive at 
\begin{align*}
    \sup_{\R^2} \psi_2 -\psi_1 =  \psi_2(\bx_0) -\psi_1(\bx_0).
\end{align*}
Since $\bx_0$ is arbitrary, $\psi_2-\psi_1$ is a constant. Moreover, by \eqref{eq:comparisonassum} and \eqref{eq:initialcomparison}, we obtain
\begin{align*}
    F_1(\bx,\psi_1(\bx)) = F_2(\bx,\psi_2(\bx)), \quad \textup{for every } \bx \in  \R^2.
\end{align*}

\end{proof}

\begin{example}
\label{ex:keyexample}
    We consider the case that $V(\bx) := e^{-|\bx|^2}, V_{\theta}(\bx) := e^{-|\bx|^2 + \theta x_1}$ for $\bx =(x_1,x_2) \in \R^2$ and $\theta \in \R.$ Then, by the exponential decay of $V_{\theta}$, Lemma \ref{lem:degreeformula}, and Corollary \ref{thm:strongestnegativebeta}, there exists $\psi_{\theta} \in C^{\infty}(\R^2)$ which satisfies 
    \begin{equation}
    \label{eq:exampleequatio}
    \begin{aligned}
&-\Delta \psi_{\theta} = -4 \pi  V_{\theta}\, e^{\psi_{\theta}}, \quad \textup{weakly in } \R^2, \\
&\int_{\R^2} V_{\theta}\, e^{\psi_{\theta}} =1,\\
& \lim_{\bx \to \infty} \frac{\psi_{\theta}(\bx)}{\log|\bx|} = 2.
    \end{aligned}
    \end{equation}
    Then, $\widetilde{\psi}_{\theta} = \psi_{\theta} + \theta x_1$ is a solution to 
\begin{equation}
\label{eq:differentassymsolex}
    \begin{aligned}
&-\Delta \widetilde{\psi}_{\theta} = -4 \pi  V\, e^{\widetilde{\psi}_{\theta}}, \quad \textup{in } \R^2,\\
&\int_{\R^2} V\, e^{\widetilde{\psi}_{\theta}} =1.
    \end{aligned}
    \end{equation}
Moreover, 
\begin{align*}
    \widetilde{\psi}_{\theta}(\bx) =\theta x_1  + 2 \log |\bx| + o(\log |\bx|), \quad \bx \in \R^2 \setminus D(0,1).
\end{align*}
Hence, by changing $\theta$, we get a parameter of different solutions to \eqref{eq:differentassymsolex}. 
    
\end{example} 

Finally, we bring the proof of Theorem \ref{thm:uniquenessposistivbeta}.
\begin{proof}[Proof of Theorem \ref{thm:uniquenessposistivbeta}]
Similar to \cite{Lin-00}, we define $\psi(r,s)$ for every $r\geq 0,s \in \R$ as the unique solution of the ODE
\begin{align}
\label{eq:equationpsiode}
   \partial_r^2  \psi(r,s) + \frac{1}{r} \partial_r \psi(r,s) +  r^n V(r) e^{\psi(r,s)} = 0,
\end{align}
    with the initial data 
\begin{align*}
    \psi(0,s) =s , \quad \partial_r  \psi(0,s) =0.
\end{align*}
Then, by integrating the equation for $ \psi(r,s)$, we derive that $\psi(r,s)$ is decreasing in $r$ and
\begin{align*}
   \lim_{r \to \infty} -r \partial_r \psi(r,s) = 2\beta(s) := \int_{0}^{\infty} r^{n+1} V(r) e^{\psi(r,s)} \, \dd r.
\end{align*}
Note that since $\psi(r,s)$ is decreasing in $r$ and \eqref{eq:uniformpointwisebound}, \eqref{eq:radialLiouvillesol} hold, we obtain that $\beta(s)$ is positive and finite. Moreover,
by Corollary \ref{cor:existencepositiveassymdecay}, we have
\begin{align}
\label{eq:betascondition}
   n > \beta(s)-2.
\end{align}
Now, by taking $\phi(r,s) = \partial_s \psi(r,s)$, we derive the linear equation
\begin{align}
\label{eq:ODEforphi}
    \partial_r^2  \phi(r,s) + \frac{1}{r} \partial_r \phi(r,s) +  r^n V(r) e^{\psi(r,s)} \phi(r,s) = 0,
\end{align}
with the initial data
\begin{align}
\label{eq:initialdataforphi}
     \phi(0,s) =1 , \quad \partial_r  \phi(0,s) =0.
\end{align}
We first prove that for a constant $C$, we have 
\begin{align}
\label{eq:boundonphi}
    |\phi(r,s)| \leq C \log(r+2),
\end{align}
for every $r \geq 0$ and $s$ being at a fixed neighbourhood of a fixed point $s_0 \geq 0.$
  To prove this, for $r_0 >1$, we use the integral of the equation for $\phi$, \eqref{eq:ODEforphi}, to obtain
\begin{align*}
   \sup_{0 \leq t \leq r} |t\, \partial_t \phi(t,s)| 
   &\leq \int_{0}^r t^{n+1} V(t) e^{\psi(t,s)} |\phi(t,s)| \, \dd t\\& \leq \int_{r_0}^r t^{n+1} V(t) e^{\psi(t,s)} |\phi(t,s)-\phi(0,s)| \, \dd t \\&+|\phi(0,s)| \int_{r_0}^r t^{n+1} V(t) e^{\psi(t,s)}  \, \dd t + \int_{0}^{r_0} t^{n+1} V(t) e^{\psi(t,s)} |\phi(t,s)| \, \dd t
   \\ & \leq e^s \sup_{0 \leq t \leq r} |t\, \partial_t \phi(t,s)| \int_{r_0}^r t^{n+1} V(t)  \, \dd t + e^s \int_{r_0}^r t^{n+1} V(t)  \, \dd t \\&+ e^s \int_{0}^{r_0} t^{n+1} V(t)  |\phi(t,s)| \, \dd t,
\end{align*}
where on the last inequality we used that $\psi(t,s)$ is decreasing in $t$ and \eqref{eq:ODEforphi}. Since \eqref{eq:uniformpointwisebound} holds, by taking $r_0$ large enough, we obtain that 
\begin{align}
\label{eq:derivativephiest}
     \sup_{0 \leq t \leq r} |t\, \partial_t \phi(t,s)|  \leq C,
\end{align}
 for $r\geq 0$ and $s$ in a fixed neighbourhood of $s_0 \geq 0$, where $C$ is a constant independent of $r$. This completes the proof of \eqref{eq:boundonphi}. Hence, by \eqref{eq:uniformpointwisebound}, \eqref{eq:boundonphi}, and $\psi$ being decreasing, we imply $\beta(s) \in C^1(\R)$. Now, to prove the uniqueness, it is sufficient to show that there exists no point $s_0 \in \R$ such that $\beta'(s_0)=0.$ Assume that $\beta'(s_0)=0$ for some $s_0 \in \R.$ Let us define $\phi(r) :=\phi(r,s_0), \psi(r) := \psi(r,s_0), \beta :=\beta(s_0)$ for every $r \geq 0$. We claim that $\phi$ is a bounded function. To see this, for $r \geq 1$, we use the equation for $\phi$, \eqref{eq:ODEforphi}, and $\beta'(s_0)=0$ to obtain that 
 \begin{align*}
     r |\phi'(r)| &= \int_r^{\infty} t^{n+1} V(t) e^{\psi(t)} |\phi(t)| \leq  C\int_r^{\infty} t^{n+1} V(t) e^{\psi(t)}  \log(t+2) 
     \\ & \leq \frac{C e^{s_0}}{r^{\delta/2}}  \sup_{t \geq 0} t^{n+2+\delta} V(t)  \int_r^{\infty}   \frac{ \log(t+2)}{t^{1+\delta/2}} \, \dd t\leq \frac{C'}{r^{\delta/2}},
 \end{align*}
where $C,C'$ are constants independent of $r$. Here, we used \eqref{eq:boundonphi} on the first inequality and  \eqref{eq:uniformpointwisebound} on the last inequality. Hence,
\begin{align*}
    |\phi(r) -\phi(1)| \leq \int_1^r |\phi'(t)| \, \dd t \leq \int_1^{\infty} \frac{C'}{t^{1+\delta/2}} \, \dd t =  \frac{2 C'}{\delta},
\end{align*}
which completes the proof of the claim. Now, similar to \cite{Lin-00}, we define the functions 
\begin{equation}
\label{eq:defofP}
\begin{aligned}
    &P(\psi)(r,s) := r \partial_r\psi(r,s) \left(\frac{1}{2} r \partial_r\psi(r,s) + \beta(s)\right)  + r^{n+2} V(r) e^{\psi},\\
    &P(\phi)(r,s) := \partial_s P(\psi)(r,s)=  r \partial_r\phi(r,s) (r \partial_r \psi(r,s) + \beta(s))\\&+\beta'(s)\, r\, \partial_r\psi(r,s)   + r^{n+2} V e^{\psi(r,s)} \phi(r,s). 
\end{aligned}
\end{equation}
Then, by \eqref{eq:equationpsiode}, we have
\begin{align*}
    \partial_r  P(\psi)(r,s) &= \partial_r(r \partial_r\psi(r,s)) ( r \partial_r\psi(r,s) + \beta(s))  + \partial_r\left(r^{n+2} V(r) e^{\psi(r,s)}\right) 
    \\ &=-r^{n+1} V(r) e^{\psi(r,s)} ( r \partial_r\psi(r,s) + \beta(s))  + \partial_r\left(r^{n+2} V(r) e^{\psi(r,s)}\right)
    \\ &=(r  V'(r)+  (n+2 -\beta) V(r)) r^{n+1} e^{\psi(r,s)}.
\end{align*}
Hence,
\begin{align}
\label{eq:integralformulationofP}
     P(\psi)(r,s) = \int_0^r (r  V'(r)+  (n+2 -\beta(s)) V(r)) r^{n+1} e^{\psi(r,s)} \, \dd r.
\end{align}
Now, we prove that 
\begin{align}
\label{eq:positivityofP}
     P(\psi)(r,s) \geq 0, \quad r \geq 0.
\end{align}
 Note that, since $\lim_{r \to \infty} r  \partial_r\psi(r,s) =-2\beta(s)$ and \eqref{eq:uniformpointwisebound}, we obtain 
 \begin{align}
     \label{eq:limitofPpsi}
     \lim_{r \to \infty} P(\psi)(r,s) =0.
 \end{align}
  Moreover, by \eqref{eq:unqiuenesscondition} and \eqref{eq:betascondition}, there exists $r_0>0$ such that 
 \begin{equation*}
\begin{aligned}
(n+2-\beta(s)) V(r) + r V'(r) \geq 0, \quad r \leq r_0,\\
(n+2-\beta(s)) V(r) + r V'(r) \leq 0,\quad r \geq r_0.
\end{aligned}
\end{equation*}
Hence, by \eqref{eq:integralformulationofP}, we derive that $ P(\psi)(r,s) \geq 0$ for every $r \leq r_0$ and 
\begin{align*}
    P(\psi)(r,s) \geq \int_0^{\infty} (r V'(r)+  (n+2 -\beta(s)) V(r)) r^{n+1} e^{\psi(r,s)} \, \dd r = \lim_{r \to \infty}  P(\psi)(r,s) =0,
\end{align*}
for every $r \geq r_0$, which concludes the proof of \eqref{eq:positivityofP}. Now, differentiating \eqref{eq:integralformulationofP} with respect to $s$ at $s=s_0$, we derive that 
\begin{equation}
\label{eq:integralformulaphi}
\begin{aligned}
    P(\phi)(r,s_0) &=  \int_0^r (r V'(r)+  (n+2 -\beta V(r)) r^{n+1} \phi(r) e^{\psi(r)}
    \\&= \int_0^r  \phi(r) \partial_r  P(\psi)(r,s_0) 
    \\ &= \phi(r) P(\psi)(r,s_0)  - \int_0^r  
  P(\psi)(r,s_0) \,\partial_r \phi(r).  
\end{aligned}
\end{equation}
 Now, if $\phi(r)$ has at least two zeros, then, by \eqref{eq:ODEforphi} and \eqref{eq:initialdataforphi}, there exists a point $r_1 >0$ such that $\phi'(r) \leq 0$ for $r \leq r_1$, $\phi(r_1)<0,$ $\phi'(r_1)=0.$ Hence, by using \eqref{eq:positivityofP}, we obtain
 \begin{equation*}
      P(\phi)(r_1,s_0) \geq \phi(r_1) P(\psi)(r_1,s_0).
 \end{equation*}
In conclusion, by \eqref{eq:defofP}, we arrive at
\begin{align*}
    r_1 \psi'(r_1)  \left( \frac{1}{2} r_1 \psi'(r_1) + \beta \right) \geq 0.
\end{align*}
This is a contradiction as
\begin{align*}
  - 2\beta=-\int_{0}^{\infty} r^{n+1} V(r) e^{\psi(r)} \, \dd r <r_1 \psi'(r_1) = -\int_{0}^{r_1} r^{n+1} V(r) e^{\psi(r)} \, \dd r<0.
\end{align*}
Now, if $\phi$ is positive, we obtain the following contradiction
\begin{align}
\label{eq:derivativebetacondition}
    0 =-2\beta'(s_0)= \lim_{r \to \infty} r \phi'(r) = -\int_0^{\infty} r^{n+1} V(r) e^{\psi(r)} \phi(r).
 \end{align}
 Finally, if $\phi$ changes sign only once, then
 \begin{align*}
    \int_0^{r} t^{n+1} V(t) e^{\psi(t)} \phi(t) \, \dd t >0,
 \end{align*}
 by \eqref{eq:derivativebetacondition}, for all $r>0$. Hence, $\phi'(r) \leq 0$ for every $r \geq 0$ and
 \begin{align*}
     0 &= \lim_{r \to \infty} P(\phi)(r,s_0)
     \\&=  \lim_{r \to \infty} \phi(r) P(\psi)(r,s_0) - \int_0^{\infty}  
  P(\psi)(r,s_0) \partial_r \phi(r) \, \dd r
  \\ & = - \int_0^{\infty}  
  P(\psi)(r,s_0) \partial_r \phi(r) \, \dd r.
 \end{align*}
Here, we used \eqref{eq:derivativephiest} and \eqref{eq:defofP} on the first equality, \eqref{eq:integralformulaphi} on the second equality, and \eqref{eq:derivativebetacondition} and the boundedness of $\phi$, together with \eqref{eq:limitofPpsi}, on the last equality. However, by \eqref{eq:positivityofP} and  $\phi'(r) \leq 0$ for every $r \geq 0$, we conclude $\phi$ must be constant which is a contradiction.
 
\end{proof}

\section{Applications}

\label{sec:application}

\subsection{Berger-Nirenberg problem}
The problem of finding Gaussian curvatures $K$ on two-dimensional unit sphere $(S^2,g)$ with metric $g$  which is pointwise conformally equivalent to the Euclidean metric is equivalent to the existence of solutions to the equation
\begin{align*}
    -\Delta u = K e^{2u} - 2,
\end{align*}
in $S^2.$ By applying the stereographic projection map $\phi : S^2 \to \R^2$, we need to find solutions to
\begin{align*}
    -\Delta \psi =8\pi\, V e^{\psi}, \quad \textup{weakly in } \R^2,
\end{align*} where $\psi = 2 u \circ \phi^{-1} ,V = \frac{1}{4\pi} K \circ \phi^{-1}$ and satisfies $\int_{\R^2}|V| e^{\psi}< \infty$ and $\int_{\R^2}V e^{\psi} =1.$ 
Theorem \ref{thm:maintheorem} confirms the existence of such solutions if $V$ is radially symmetric and satisfies \eqref{eq:conditionondelta}.
This condition is equivalent with \cite[Thm. 2]{H-86} if $K$ is a smooth function. Note that in the case that $K$ is H\"older continuous, $K$ being zero around the origin (around the infinity) implies that $|V(\bx)| \leq C |\bx|^{\delta}$ ($|V(\bx)| \leq C (1+|\bx|)^{-\delta}$) for some positive constants $C,\delta.$ Hence, we derive the following result:
\begin{corollary}
    Let $S,N$ be the south and north pole of the two-dimensional unit sphere $(S^2,g)$ where its Gaussian curvature $K$ is H\"older continuous and radially symmetric around $S$. Then, $(S^2,g)$ is conformally equivalent with the Euclidean sphere if $K$ is positive somewhere in $S^2$ and
    \begin{align*}
        \max(K(S),K(N)) \leq 0.
    \end{align*}
\end{corollary}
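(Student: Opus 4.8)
The plan is to deduce the corollary directly from Theorem~\ref{thm:maintheorem}(1) with $\beta=2$, via the stereographic correspondence recalled just above. Let $\phi\colon S^2\to\R^2$ be stereographic projection from the north pole $N$, normalised so that $\phi(S)=0$. Then producing on $(S^2,g)$ a metric conformal to the Euclidean sphere amounts, as explained above, to finding $\psi\in L^1_{\loc}(\R^2)$ with
\begin{equation*}
-\Delta\psi=4\pi\cdot 2\cdot V e^{\psi}\quad\text{weakly in }\R^2,\qquad \int_{\R^2}|V|e^{\psi}<\infty,\qquad\int_{\R^2}Ve^{\psi}=1,
\end{equation*}
where $V:=\frac{1}{4\pi}K\circ\phi^{-1}$, so we are exactly in the setting of Theorem~\ref{thm:maintheorem} with $\beta=2$. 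Since $K$ is radially symmetric about $S$ and $\phi^{-1}$ maps each circle $\{|\bx|=\rho\}$ onto a circle of constant geodesic distance from $S$, the function $V$ is radially symmetric; since $K$ is H\"older continuous and $\phi^{-1}$ is a smooth diffeomorphism onto $S^2\setminus\{N\}$, $V$ is locally H\"older continuous, in particular $V\in L^1_{\loc}(\R^2)$; and $V$ is bounded because $S^2$ is compact. Finally, $K$ (hence $V$) is positive at some point, so by continuity and radial symmetry $V\ge C$ on an annulus $D(0,R_2)\setminus D(0,R_1)$ for some $C>0$ and $R_2>R_1>0$, which is the standing hypothesis of Theorem~\ref{thm:maintheorem}.

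It remains to check \eqref{eq:minimumcondionV} and \eqref{eq:conditionondelta}. Boundedness of $V$ gives $\int_{\R^2\setminus D(0,1)}|V(\bx)|\,|\bx|^{2\alpha}\,\dd\bx<\infty$ for every $\alpha<-1$, hence $\alpha(V)\ge -1>-2=-\beta$ and \eqref{eq:minimumcondionV} holds; the same bound gives $\int_{\R^2\setminus D(0,1)}V^-(\bx)|\bx|^{-2\beta}\,\dd\bx\le\|V\|_{L^\infty}\int_{|\bx|\ge1}|\bx|^{-4}\,\dd\bx<\infty$. For the two integrals of $V^+$ we invoke $\max(K(S),K(N))\le 0$. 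Fix a H\"older exponent $\alpha_0\in(0,1]$ of $K$. Near the origin: if $K(S)<0$, then $K<0$ (hence $V<0$) on a whole neighbourhood of $S$, so $V^+\equiv 0$ on some $D(0,\rho)$; if $K(S)=0$, then $|K(p)|\le C\,d_{S^2}(p,S)^{\alpha_0}$ together with $d_{S^2}(\phi^{-1}(\bx),S)\asymp|\bx|$ as $\bx\to0$ gives $|V(\bx)|\le C|\bx|^{\alpha_0}$ for $|\bx|\le1$. Near infinity, using $d_{S^2}(\phi^{-1}(\bx),N)\asymp|\bx|^{-1}$ as $|\bx|\to\infty$: if $K(N)<0$, then $V^+\equiv 0$ outside a large disk, while if $K(N)=0$, then $|V(\bx)|\le C|\bx|^{-\alpha_0}$ for $|\bx|\ge1$. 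Choosing $\delta:=\alpha_0/2>0$ (or any $\delta>0$ in the subcases where the relevant pole value is strictly negative), we obtain $|\bx|^{-\beta-\delta}V^+(\bx)\lesssim|\bx|^{\alpha_0-2-\delta}\in L^1(D(0,1))$ and $|\bx|^{-\beta+\delta}V^+(\bx)\lesssim|\bx|^{-\alpha_0-2+\delta}\in L^1(\R^2\setminus D(0,1))$, so \eqref{eq:conditionondelta} holds.

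Therefore Theorem~\ref{thm:maintheorem}(1) applies and yields a radially symmetric $\psi\in H^1_{\loc}(\R^2)\cap C^1(\R^2\setminus 0)$ solving the displayed system; reading this solution back through $\phi$ produces the desired conformal metric, so $(S^2,g)$ is conformally equivalent to the Euclidean sphere. The only genuine content is the estimate of the previous paragraph: one must convert the rate at which $K$ (hence $V$) vanishes at $S$ and at $N$ --- forced precisely by $\max(K(S),K(N))\le 0$ together with H\"older continuity --- into the exponents $-\beta\mp\delta$ occurring in \eqref{eq:conditionondelta}, and check that a single $\delta>0$ simultaneously validates all three integral conditions; everything else is a routine transcription of the stereographic correspondence. (If one keeps the conformal factor of $\phi$ inside $V$, the decay at infinity only improves, so the conclusion holds a fortiori.)
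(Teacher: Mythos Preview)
Your proof is correct and follows exactly the approach the paper takes: reduce via stereographic projection to Theorem~\ref{thm:maintheorem}(1) with $\beta=2$, and use H\"older continuity at the poles together with $\max(K(S),K(N))\le 0$ to produce the exponent $\delta>0$ needed in \eqref{eq:conditionondelta}. You have simply made explicit the case split $K(\text{pole})<0$ versus $K(\text{pole})=0$ and the verification of each integral, which the paper leaves implicit in the sentence preceding the corollary.
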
 We can generalize this result to the case in which we allow for conical singularities. We remind the reader that a point $p$ on $(S^2,g)$ is a conical singularity of order $\beta$ for the metric $g$ if there exists a conformal map $z : U \to C$ defined on a neighborhood $U$ of $p$, which satisfies $z(p) =0 $ and $g = \psi(z) |z|^{2\beta} |\dd z|^2$ on $U$ for some continuous positive function $\psi.$ If the metric $g$ has conical singularities at the points $p_1, p_2, \cdot \cdot \cdot, p_n \in S^2$ of order $\beta_1, \beta_2, \cdot \cdot \cdot,\beta_n$, we say that $g$ represents the divisor $\pmb{\beta} = \beta_1 p_1 + \cdot \cdot \cdot  + \beta_n p_n.$ Now, by using stereographic projection as before and Theorem \ref{thm:maintheorem} (or Corollary \ref{cor:assymptotic}), we derive the following generalization of \cite[Thm. 5]{T-91}.
\begin{corollary}
    Let $(S^2,g)$ be with the divisor $\pmb{\beta} = \beta_1 S + \beta_2 N $, where $S,N$ are the south and north pole, respectively, and $\beta_1, \beta_2 \in \R$. Assume that the Gaussian curvature $K$ of $S^2$ is H\"older continuous and radially symmetric around $S$, which is positive somewhere in $S^2$ and satisfies 
    \begin{align*}
        K^+(z_1) \leq C |z_1 -z_1(0)|^{l_1},\\
         K^-(z_2) \leq C |z_2 -z_2(0)|^{l_2},
    \end{align*}
   where $C$ is a constant and $z_1: \C \to S^2,z_2:\C \to S^2$ are coordinates with with $z_1(0)= S,z_2(0)=N.$ Then, there exists a conformal metric on $(S^2,g)$ if 
\begin{align*}
-1<\beta_2< \beta_1, \quad l_1 >-(\beta_1+ \beta_2), \quad l_2 > -2(\beta_1+ \beta_2)-2.
\end{align*}

\end{corollary}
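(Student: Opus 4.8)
The plan is to transfer the question to the Liouville equation \eqref{eq:mainequation} on $\mathbb{R}^2$ via stereographic projection from the north pole $N$, exactly as in the Berger--Nirenberg discussion above, and then apply Corollary \ref{cor:assymptotic}. Under this projection $S^2\setminus\{N\}$ is identified with $\mathbb{R}^2$ with $S\mapsto 0$; the conical singularity of order $\beta_1$ at $S$ produces a weight $|x|^{2\beta_1}$ in the potential, which is locally integrable (since $\beta_1>-1$) but in general unbounded at the origin --- precisely the generality permitted by Theorem \ref{thm:maintheorem} --- while the conical singularity of order $\beta_2$ at $N$ is encoded in the asymptotic rate of the solution at infinity. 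More precisely, I would write the sought conformal factor near the origin as $w=\beta_1\log|x|+\tfrac12\psi$ with $\psi$ continuous at $0$; then the prescribed-curvature equation $-\Delta w=Ke^{2w}$ becomes, as an identity of distributions on all of $\mathbb{R}^2$,
\[
-\Delta\psi = 4\pi\beta\,\widetilde V e^{\psi},\qquad \widetilde V:=\frac{K\,|x|^{2\beta_1}}{2\pi\beta},\qquad \int_{\mathbb{R}^2}\widetilde V e^{\psi}=1 .
\]
By Gauss--Bonnet the total curvature of $(S^2,g)$ equals $4\pi+2\pi(\beta_1+\beta_2)$, which forces $\beta=2+\beta_1+\beta_2$; the hypothesis $-1<\beta_2<\beta_1$ gives $\beta_1+\beta_2>2\beta_2>-2$, so $\beta>0$ and we are in case (1) of Theorem \ref{thm:maintheorem}.

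The next step is to verify that $\widetilde V$ satisfies the hypotheses of Corollary \ref{cor:assymptotic} for this $\beta$. Radial symmetry of $K$ about $S$ makes $\widetilde V$ radially symmetric, and continuity of $K$ together with positivity of $K$ somewhere on $S^2$ gives $\widetilde V\geq C$ on some annulus $D(0,R_2)\setminus D(0,R_1)$. For the power bounds one uses that $z_1$ and $z_2$ are conformal coordinates centred at $S$ and $N$, so $|z_1-z_1(0)|\asymp|x|$ as $x\to0$ and $|z_2-z_2(0)|\asymp|x|^{-1}$ as $x\to\infty$; combined with $\sup_{S^2}|K|<\infty$ this converts the two Hölder-decay hypotheses on $K$ into pointwise bounds of the form $\widetilde V^{+}(x)\lesssim|x|^{a_0}$ near $0$, $\widetilde V^{+}(x)\lesssim|x|^{a_\infty}$ and $\widetilde V^{-}(x)\lesssim|x|^{b_\infty}$ near $\infty$, with $a_0,a_\infty,b_\infty$ affine in $l_1,l_2,\beta_1,\beta_2$. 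A direct power-counting of the integrals in \eqref{eq:conditionondelta} then shows that the conditions of Corollary \ref{cor:assymptotic} reduce, after substituting $\beta=2+\beta_1+\beta_2$, to exactly the stated inequalities $-1<\beta_2<\beta_1$, $l_1>-(\beta_1+\beta_2)$, $l_2>-2(\beta_1+\beta_2)-2$. Corollary \ref{cor:assymptotic} then furnishes a radially symmetric $\psi\in H^1_{\loc}(\mathbb{R}^2)\cap C^1(\mathbb{R}^2\setminus0)$ solving the displayed equation, which by Remark \ref{rem:assymtoticradialsol} satisfies $\psi(x)/\log|x|\to-2\beta$, and Proposition \ref{prop:regularity} (applicable because the potential is locally comparable to $|x|^{2\beta_1}$ with $2\beta_1>-2$) makes $\psi$ locally $\alpha$-Hölder for some $\alpha>0$. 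Undoing $w=\beta_1\log|x|+\tfrac12\psi$ and the projection produces the desired conformal metric on $(S^2,g)$ with divisor $\pmb\beta$ and Gaussian curvature $K$.

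I expect the delicate point to be the bookkeeping in the second step: I must track simultaneously the weight $|x|^{2\beta_1}$ coming from the cone at $S$, the conformal change of coordinate near $N$ responsible for the inversion $|z_2|\asymp|x|^{-1}$, and the normalisation $\beta=2+\beta_1+\beta_2$, so that the elementary estimates for the integrals in \eqref{eq:conditionondelta} reproduce the claimed inequalities and nothing stronger. If the crude count only yields the endpoint $l_0=2\beta-2$ for $\widetilde V^{-}$ at infinity rather than a strict inequality, one should invoke the refinement recorded in Remark \ref{rem:assymtoticradialsol}, which relaxes $l_0<2\beta-2$ to $l_0\leq2\beta-2$. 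The remaining ingredients --- the conformal change of variables, the Gauss--Bonnet computation of $\beta$, and the regularity --- are routine in light of Theorem \ref{thm:maintheorem}, Corollary \ref{cor:assymptotic}, and Propositions \ref{prop:logbound}--\ref{prop:regularity}.
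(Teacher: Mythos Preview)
Your proposal is correct and follows essentially the same route as the paper: the paper's entire argument for this corollary is the sentence ``by using stereographic projection as before and Theorem \ref{thm:maintheorem} (or Corollary \ref{cor:assymptotic}), we derive the following generalization of \cite[Thm.~5]{T-91}.'' You have supplied precisely this outline with the expected details---the substitution $w=\beta_1\log|x|+\tfrac12\psi$, the Gauss--Bonnet identification $\beta=2+\beta_1+\beta_2$, and the reduction to the power-bound hypotheses of Corollary \ref{cor:assymptotic}---so there is nothing to add.
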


\subsection{Blow-up analysis}
Motivated by the question of Brezis and Merle in \cite{BM-91}, we prove the following results for $\beta <0$ and $\beta>0$, respectively.
\begin{corollary}
\label{cor:blowuplimitnegativebeta}
    Let $\beta<0 , V_n \in C^{0,\alpha}(\R^2)$ be a sequence of non-negative functions and $\psi_n \in C^{2,\alpha}(\R^2)$ be a sequence of solutions to 
\begin{align*}
    -\Delta \psi_n =4\pi \beta V_n e^{\psi_n},\quad \textup{weakly in } \R^2, 
 \end{align*}
which satisfies 
\begin{align*}
  \lim_{\bx \to \infty} \frac{\psi_n(\bx)}{\log|\bx|} = -2\beta.
\end{align*}
Assume that $V_n$ converges to $V$ locally uniformly and there exist $W_1,W_2 \in C^{0,\alpha}(\R^2), \phi_1,\phi_2 \in C^{2,\alpha}(\R^2)$ such that 
\begin{align*}
   &0 \leq W_1 \leq V_n \leq W_2,\quad \textup{in } \R^2,\\
    -&\Delta \phi_i =4\pi \beta \, W_i e^{\phi_i},\quad \textup{in } \R^2, \\
    & \lim_{\bx \to \infty} \frac{\phi_i(\bx)}{\log|\bx|} = -2\beta,
\end{align*}
for every $i=1,2.$ Then, $\psi_n$ converges to $\psi$ locally uniformly such that 
\begin{align*}
    &-\Delta \psi =4\pi \beta\, V e^{\psi},\quad \textup{weakly in } \R^2, \\
    & \lim_{\bx \to \infty} \frac{\psi(\bx)}{\log|\bx|} = -2\beta.
 \end{align*}
\end{corollary}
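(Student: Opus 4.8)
The plan is to trap every $\psi_n$ between the two fixed barriers $\phi_2$ and $\phi_1$ by means of the comparison principle Theorem \ref{thm:comparison}, then extract a locally uniformly convergent subsequence via interior elliptic estimates, and finally promote this to convergence of the whole sequence using that the limiting equation has a unique solution in the class pinned down by the barriers.

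\emph{Step 1 (the sandwich $\phi_2\le\psi_n\le\phi_1$).} Fix $n$ and put $F(\bx,r):=-4\pi\beta\,V_n(\bx)\,e^{r}$. Since $\beta<0$ and $V_n\ge 0$, $F$ is non-negative, continuous on $\R^2\times\R$, and non-decreasing in $r$, so the pair $F_1=F_2=F$ satisfies \eqref{eq:comparisonassum}. Because $W_1\le V_n$ and $4\pi\beta<0$, the function $\phi_1$ is a supersolution of the $\psi_n$-equation:
\begin{align*}
-\Delta\psi_n+F(\bx,\psi_n)=0\le 4\pi\beta\,(W_1-V_n)\,e^{\phi_1}=-\Delta\phi_1+F(\bx,\phi_1),\qquad \bx\in\R^2,
\end{align*}
and the common asymptotics $\phi_1(\bx)/\log|\bx|\to-2\beta$, $\psi_n(\bx)/\log|\bx|\to-2\beta$ give $\liminf_{\bx\to\infty}(\phi_1-\psi_n)(\bx)/\log|\bx|=0$. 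Theorem \ref{thm:comparison} with $(\psi_1,\psi_2)=(\phi_1,\psi_n)$ then yields either $\psi_n\le\phi_1$ in $\R^2$, or $\psi_n-\phi_1$ is a positive constant with $V_n e^{\phi_1}\equiv V_n e^{\psi_n}$; the latter forces $V_n\equiv 0$, which is impossible since $V_n\ge W_1$ and $W_1\not\equiv 0$ (if $W_1\equiv 0$, then $\phi_1$ would be a harmonic function on $\R^2$ of sublinear growth, hence constant, contradicting $\phi_1(\bx)/\log|\bx|\to-2\beta\ne 0$). Hence $\psi_n\le\phi_1$. Symmetrically, using $V_n\le W_2$, so that $\phi_2$ is a subsolution of the $\psi_n$-equation, Theorem \ref{thm:comparison} with $(\psi_1,\psi_2)=(\psi_n,\phi_2)$ gives $\psi_n\ge\phi_2$. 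Thus $\phi_2\le\psi_n\le\phi_1$ in $\R^2$ for every $n$.

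\emph{Step 2 (compactness and the limiting equation).} On every ball $B\subset\R^2$, Step 1 yields a uniform bound for $\|\psi_n\|_{L^\infty(B)}$, and since $V_n\to V$ locally uniformly, also for $\|\Delta\psi_n\|_{L^\infty(B)}=\|4\pi\beta V_n e^{\psi_n}\|_{L^\infty(B)}$. Interior $L^p$-elliptic estimates then bound $\{\psi_n\}$ in $W^{2,p}_{\loc}(\R^2)$ for every $p<\infty$, hence in $C^{1,\gamma}_{\loc}(\R^2)$, and by Arzel\`a--Ascoli a subsequence converges in $C^1_{\loc}(\R^2)$ to some $\psi$ with $\phi_2\le\psi\le\phi_1$. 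Along this subsequence $V_n e^{\psi_n}\to V e^{\psi}$ locally uniformly, so passing to the limit in the weak formulation gives $-\Delta\psi=4\pi\beta\,V e^{\psi}$ weakly in $\R^2$, while the squeeze $\phi_2\le\psi\le\phi_1$ together with $\phi_i(\bx)/\log|\bx|\to-2\beta$ forces $\psi(\bx)/\log|\bx|\to-2\beta$. Finally, if $\psi'$ is any other solution of the limiting equation with $\phi_2\le\psi'\le\phi_1$, then $\liminf_{\bx\to\infty}(\psi-\psi')(\bx)/\log|\bx|\ge 0$ and likewise with $\psi$ and $\psi'$ interchanged, so two applications of Theorem \ref{thm:comparison} with $F_1=F_2=-4\pi\beta V(\bx)e^{r}$ (the degenerate alternative excluded as above, since $V\ge W_1\not\equiv 0$) give $\psi=\psi'$. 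Hence the limit is independent of the subsequence and $\psi_n\to\psi$ locally uniformly.

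\emph{Main obstacle.} The delicate point is Step 1: one must pick the correct direction of the barrier inequalities — for $\beta<0$ the \emph{smaller} weight $W_1$ produces the \emph{upper} barrier $\phi_1$ and the larger weight $W_2$ the lower barrier $\phi_2$ — verify the monotonicity hypothesis \eqref{eq:comparisonassum}, and dispose of the exceptional ``constant-gap'' alternative in the conclusion of Theorem \ref{thm:comparison}. A secondary technicality is that Theorem \ref{thm:comparison} is stated for $C^2$ functions, so to use it for uniqueness of the limit one either notes that $V e^{\psi}$ is locally H\"older (whence $\psi\in C^{2,\gamma}_{\loc}$ by Schauder estimates) or runs the comparison at the level of the mollified problems, or invokes the comparison principle for continuous sub- and supersolutions (cf. \cite{CIL-92} and the remark after Theorem \ref{thm:comparison}) together with the growth control used there.
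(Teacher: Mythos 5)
Your proof is correct and follows essentially the same route as the paper: sandwich $\phi_2\le\psi_n\le\phi_1$ via Theorem \ref{thm:comparison}, then local elliptic estimates and Arzel\`a--Ascoli to pass to the limit, with the squeeze giving the asymptotics. You are in fact more complete than the paper's own argument, since you verify the hypotheses of the comparison theorem explicitly, rule out the constant-gap alternative, and add the uniqueness step that upgrades subsequential convergence to convergence of the whole sequence, a point the paper leaves implicit (it only extracts a subsequence).
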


\begin{proof}
    By Theorem \ref{thm:comparison}, we get 
\begin{align*}
  \phi_2 \leq  \psi_n \leq \phi_1, \quad \textup{uniformly in } \R^2.
\end{align*}
Hence, by a standard elliptic regularity result; \cite[Ch. 9]{GT-01}, $\|\psi_n\|_{C^{2,\alpha}(\Omega)}$ is bounded for every bounded subset $\Omega \subset \R^2$. In conclusion, by Rellich-Kondrakov theorem, $\psi_n$ converges to $\psi$ pointwise and in $L^2_{\loc}(\R^2)$, up to a subsequence. Moreover,
\begin{equation}
\begin{aligned}
-\Delta& \psi =4\pi \beta V e^{\psi},\quad \textup{weakly in } \R^2,\\
 &   \phi_2 \leq  \psi \leq \phi_1, \quad \textup{in } \R^2.
\end{aligned}
\end{equation}
 In conclusion,
\begin{align*}
     \lim_{\bx \to \infty} \frac{\psi(\bx)}{\log|\bx|} =   \lim_{\bx \to \infty} \frac{\phi_1(\bx)}{\log|\bx|} =   \lim_{\bx \to \infty} \frac{\phi_2(\bx)}{\log|\bx|} = -2\beta,
\end{align*}
which completes the proof.

\end{proof}

\begin{corollary}
\label{cor:blowuplimitpositivebeta}
      Let $\beta_k>0$ converge to $\beta >0$, $\delta_k>0$ converge to $\delta>0$, and $V_k \in  L^1_{\loc}(\R^2)$ be a sequence of radially symmetric functions that converge to $V \in L^1_{\loc}(\R^2)$ pointwise and in $L^1_{\loc}(\R^2)$. Assume that, for some $\varepsilon>0$, we have 
      \begin{equation}\begin{aligned}
    \label{eq:assymptoicpropeorysequecne}
   & \int_{D(0,1)} |V(\bx)| \, |\bx|^{-\varepsilon} \, \dd \bx + \int_{\R^2 \setminus D(0,1)} |V(\bx)| \, |\bx|^{-2\beta+ \varepsilon} \, \dd \bx < \infty,\\
   & \lim_{k \to \infty}\int_{D(0,1)} |V_k(\bx)-V(\bx)| \, |\bx|^{-\varepsilon} \, \dd \bx + \int_{\R^2 \setminus D(0,1)} \left|V_k(\bx) |\bx|^{-2\beta_k+ \varepsilon}-V(\bx) |\bx|^{-2\beta+ \varepsilon}\right| \,  \dd \bx  = 0,\\
&\sup_k\int_{D(0,1)} V_k^+(\bx) \,|\bx|^{-\beta_k-\delta_k} \, \dd \bx + \int_{\R^2 \setminus D(0,1)} V_k^+(\bx) |\bx|^{-\beta_k+\delta_k} \, \dd \bx < \infty.
\end{aligned}
\end{equation}
 and $\psi_k \in H^1_{\loc}(\R^2)$ is the radially symmetric solutions constructed in Theorem \ref{thm:maintheorem} which satisfies 
\begin{equation}
\label{eq:sequecneblowupass}
\begin{aligned}
   & -\Delta \psi_k = 4 \pi \beta_k  V_k e^{\psi_k},\quad \textup{weakly in } \R^2,\\
    & \int_{\R^2}  |V_k| e^{\psi_k} < \infty, \, \int_{\R^2}  V_k e^{\psi_k} = 1.
\end{aligned}
\end{equation}
Then, $\psi_k$ converges to $\psi \in H^1_{\loc}(\R^2)$ pointwise and in $L^2_{\loc}(\R^2)$, up to a subsequence, which satisfies 
\begin{equation}
\label{eq:limitfunctionequa}
\begin{aligned}
   & -\Delta \psi = 4 \pi \beta  V e^{\psi},\quad \textup{weakly in } \R^2,\\
    & \int_{\R^2}  |V| e^{\psi} < \infty, \, \int_{\R^2}  V e^{\psi} = 1.
\end{aligned}
\end{equation}

\end{corollary}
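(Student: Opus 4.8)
The plan is to mimic the convergence arguments already used inside the proof of Theorem \ref{thm:maintheorem} for the case $\beta>0$, now tracking the dependence on the sequence $(\beta_k,\delta_k,V_k)$. Recall that in that proof the solution $\psi_k$ is obtained by first passing to $\psi_{1,k}:=\psi_k+\psi_{0,k}$, where $\psi_{0,k}\in C^\infty(\R^2)$ is a fixed radial function with $\psi_{0,k}(\bx)=2\beta_k\log|\bx|$ outside $D(0,1)$, and then writing $\psi_{1,k}=\widetilde\phi_k-\log(\int_{\R^2}V_{1,k}e^{\widetilde\phi_k})$, where $V_{1,k}:=V_ke^{-\psi_{0,k}}$ and $\widetilde\phi_k$ is the limit of minimizers $\widetilde\phi_{n,k}$ of the truncated functionals $\mathcal{E}_{n,k}$. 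The key a priori bounds \eqref{eq:crucialbounds}, \eqref{eq:pointwisebound}, \eqref{eq:energybounds} are, by the remark following that proof, controlled by quantities depending continuously on $R_1,R_2$, the lower bound $C$, and the integrals $\int_{D(0,1)}V_{1,k}^+|\bx|^{-\beta_k-\delta_k}$, $\int_{\R^2\setminus D(0,1)}V_{1,k}^+|\bx|^{-\beta_k+\delta_k}$, $\int_{\R^2}V_{1,k}^-$. By the first and third hypotheses in \eqref{eq:assymptoicpropeorysequecne}, together with $\beta_k\to\beta$, $\delta_k\to\delta$, and the elementary identity $V_{1,k}=V_ke^{-\psi_{0,k}}$, these quantities are uniformly bounded in $k$; moreover $V_{1,k}\to V_1:=Ve^{-\psi_0}$ in the weighted $L^1$ sense dictated by \eqref{eq:keypropertyforV}.

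First I would fix a large disk $D(0,R_2)\subset D(0,n)$ and, choosing the cut-off function $\varphi^M$ from the proof of Theorem \ref{thm:maintheorem} with $M$ uniform in $k$ (possible since the quantities controlling $M$ are uniformly bounded by \eqref{eq:assymptoicpropeorysequecne}), conclude a uniform-in-$k$ upper bound $\mathcal{E}_{n,k}[\varphi^M]\le C$ exactly as in \eqref{eq:upperboundonenergry}. Combined with the coercivity estimate \eqref{eq:finalengergybound} — whose constant depends only on $\varepsilon$, $\beta_k/(\beta_k+\delta_k)$, $\int_{\R^2}V_{1,k}^+e^{(\beta_k+\delta_k)|\log r|}$ and $\int_{D(0,1)}|\log|\bx||$, all uniformly controlled — this yields $\sup_k\|\nabla\widetilde\phi_k\|_{L^2(\R^2)}\le C$ and the pointwise bound \eqref{eq:pointwisebound} with a constant uniform in $k$. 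Then I would apply Proposition \ref{prop:logbound} and the Rellich–Kondrakov theorem to extract a subsequence along which $\widetilde\phi_k\to\widetilde\phi$ pointwise and in $L^2_{\loc}(\R^2)$, with $\widetilde\phi\in\dot H^1(\R^2)$.

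Next I would pass to the limit in the equation. Using the pointwise bound \eqref{eq:pointwisebound} (uniform in $k$) and the hypothesis \eqref{eq:assymptoicpropeorysequecne}, I would dominate $V_{1,k}^+e^{\widetilde\phi_k}$ by $C\bigl(V_{1,k}^+|\bx|^{-\beta_k-\delta_k}1_{D(0,1)}+V_{1,k}^+e^{\sqrt{C_1|\log|\bx||/\pi}}1_{\R^2\setminus D(0,1)}\bigr)$ exactly as in the proof of Theorem \ref{thm:maintheorem}, and apply (generalized) dominated convergence — using the $L^1$-convergence of the weighted $V_k$'s — to get $\int V_{1,k}^+e^{\widetilde\phi_k}\to\int V_1^+e^{\widetilde\phi}$; Fatou handles $V_{1,k}^-$, so $\int|V_1|e^{\widetilde\phi}<\infty$ and $\int V_1e^{\widetilde\phi}\ge 1$. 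Passing to the limit in the weak formulation of \eqref{eq:equationforphin} (with $f_k=-\Delta\psi_{0,k}\to f=-\Delta\psi_0$ in $C^\infty_c$), I obtain that $\widetilde\phi$ satisfies \eqref{eq:finalequation}; setting $\psi_1:=\widetilde\phi-\log(\int V_1e^{\widetilde\phi})$ and $\psi:=\psi_1-\psi_0$ gives the desired limit equation, with $\int Ve^\psi=1$ forced by the normalization. Since $\psi_k=\psi_{1,k}-\psi_{0,k}$ and both pieces converge, $\psi_k\to\psi$ pointwise and in $L^2_{\loc}$, and $\int|V|e^\psi<\infty$ follows from Fatou.

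The main obstacle is the passage to the limit in the nonlinear term over the full plane: one must ensure the domination is uniform in $k$ despite the exponents $\beta_k\pm\delta_k$ varying, and that the weighted integrals $\int_{\R^2\setminus D(0,1)}V_k^+|\bx|^{-\beta_k+\delta_k}$ really do control $\int V_{1,k}^+e^{\widetilde\phi_k}$ near infinity — this is where the second line of \eqref{eq:assymptoicpropeorysequecne} (the $L^1$-convergence of $V_k|\bx|^{-2\beta_k+\varepsilon}$, not merely of $V_k$) is essential, since it is the mechanism by which the tails of $V_ke^{\psi_k}$ are kept equi-integrable and one rules out escape of mass to infinity. A secondary subtlety is that the normalization $\int Ve^\psi=1$ should be checked by combining the lower bound $\int V_1e^{\widetilde\phi}\ge 1$ (from Fatou) with the reverse inequality coming from the fact that $\widetilde\phi$ is again a minimizer of the limiting functional $\mathcal{E}$ — exactly the argument \eqref{eq:minimizerofenergy}–\eqref{eq:finalequation} in the proof of Theorem \ref{thm:maintheorem}, which forces the Euler–Lagrange normalization.
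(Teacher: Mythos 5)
Your proposal is correct and follows essentially the same route as the paper's proof: a uniform-in-$k$ bound on $\|\nabla(\psi_k+\phi_k)\|_{L^2(\R^2)}$ inherited from the construction in Theorem \ref{thm:maintheorem}, the logarithmic oscillation bound of Proposition \ref{prop:logbound} upgraded to an absolute pointwise bound via the normalization $\int_{\R^2}V_ke^{\psi_k}=1$, Rellich--Kondrakov compactness, and dominated convergence driven by \eqref{eq:assymptoicpropeorysequecne}. The only real difference is that the paper does not re-open the variational construction: it invokes the clause of Theorem \ref{thm:maintheorem} stating that the bound \eqref{eq:normofpsi} depends continuously on the data and then works directly with $\psi_k$ and the equation, whereas you route everything through the minimizers $\widetilde{\phi}_k$; in particular your final appeal to minimality of the limit is superfluous, since once dominated convergence gives $\int_{\R^2}V_{1,k}e^{\widetilde{\phi}_k}\to\int_{\R^2}V_1e^{\widetilde{\phi}}\in(0,\infty)$ the normalization is automatic from the definition $\psi_1=\widetilde{\phi}-\log\left(\int_{\R^2}V_1e^{\widetilde{\phi}}\right)$.
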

\begin{proof}
We use Theorem \ref{thm:maintheorem} to obtain 
\begin{align*}
  C_0 := \sup_{k>0} \int_{\R^2} |\nabla (\psi_k + \phi_k)|^2 < \infty,
\end{align*}
where $\phi_k \in C^{\infty}(\R^2)$ are fixed functions, satisfying $\phi_k(r) = 2\beta_k \log r$ for $r \geq 1$. Hence, by Proposition \ref{prop:logbound}, we have
\begin{align*}
   |\psi_k+ \phi_k(r)-\psi_k+ \phi_k(1)|^2 \leq  \frac{C_0}{2\pi}|\log r|, \quad \textup{for every } r >0.
\end{align*}
In conclusion, 
\begin{equation}
\label{eq:upperboundforblowupsequecne}
\begin{aligned}
|\psi_k+ \phi_k(r)|   &= \left|\psi_k+ \phi_k(r) - \log\left( \int_{\R^2} V_k \, e^{-\phi_k} e^{\psi_k + \phi_k}  \right)\right |\\&\leq \left |\psi_k+ \phi_k(r) - \psi_k+ \phi_k(1) \right | +\left |\log\left( \int_{\R^2} V_k \, e^{-\phi_k} e^{\psi_k + \phi_k - \psi_k+ \phi_k(1)}  \right) \right |\\
\\&\leq \varepsilon |\log r| + \frac{C_0}{2\pi \varepsilon} + \left | \int_{\R^2} V_k \, e^{-\phi_k} e^{\varepsilon |\log r| + \frac{C_0}{2\pi \varepsilon}} \right |,
\end{aligned}
\end{equation}
for every $\varepsilon>0$ and $r>0$, where we used \eqref{eq:sequecneblowupass} in the first equality. Notice that, by \eqref{eq:assymptoicpropeorysequecne}, the last term satisfies
\begin{align*}
    \sup_k \left | \int_{\R^2}  V_k e^{-\phi_k} e^{\varepsilon |\log r| + \frac{C_0}{2\pi \varepsilon}} \right | < \infty.
\end{align*}
In particular, $\psi_k$ is uniformly bounded in $H^1(D)$ for every finite disk $D \subset \R^2$ and, by Rellich-Kondrakov theorem, converges to $\psi$ pointwise and locally in $L^2_{\loc}(\R^2)$, up to a subsequence. Hence, 
\begin{align*}
     -\Delta \psi = 4 \pi \beta \, V e^{\psi},\quad \textup{weakly in } \R^2.
\end{align*}
Moreover, by \eqref{eq:assymptoicpropeorysequecne}, \eqref{eq:sequecneblowupass}, \eqref{eq:upperboundforblowupsequecne}, and the Lebesgue's dominated convergence theorem, we obtain
 \begin{align*}
 \int_{\R^2}  V e^{\psi}  = \lim_{k \to \infty} \int_{\R^2}  V_k e^{-\phi_k} e^{\psi_k+\phi_k} =1,
 \end{align*}
 which completes the proof. 

\end{proof}
\subsection{Statistical mechanics}
The statistical mechanics of the point vortex system in the mean field limit has been used as a model to understand the behavior of the 2-dimensional turbulent flows; see \cite{BD-15,BJLY-18,CLMP-92,CLMP-95,CK-94,ES-93, ES-78,K-93, KL-97,Lin-00,O-49,PD-93,SO-90,TY-04,W-92}. In the presence of external fields with a point vortex at the origin, the mean filed equation is as follows:
\begin{align*}
-\Delta \Psi(\bx) = \frac{ |\bx|^{n} e^{-\beta \Psi - \gamma |\bx|^{\alpha} }}{\int_{\R^2} |\by|^{n} e^{-\beta \Psi(\by) - \gamma |\by|^{\alpha} } \, \dd \by}, \quad \textup{for every } \bx \in \R^2,
\end{align*}
 where $\Psi$ is the stream function of the flow, $\gamma,\alpha$ are positive, $n$ is non-negative, and $\beta =-\frac{1}{\kappa T_{\textup{stat}}}$ for statistical temperature $T_{\textup{stat}}$ and Boltzmann constant $\kappa.$ We note that 
 \begin{align*}
   \Psi(\bx) =- \frac{1}{2\pi} \int_{\R^2}  (\log|\bx-\by| - \log (|\by|+1) ) \, \rho(\by) \, \dd \by, \quad \textup{for every } \bx \in \R^2,
 \end{align*}
 where 
 \begin{align}
 \label{eq:densityequation}
  \rho(\bx):= \frac{ |\bx|^{n} e^{-\beta \Psi(\bx) - \gamma |\bx|^{\alpha} }}{\int_{\R^2} |\by|^{n} e^{-\beta \Psi(\by) - \gamma |\by|^{\alpha} } \, \dd \by} \quad \textup{for every } \bx \in \R^2,
 \end{align}
 is the density of the flow, which satisfies $\int_{\R^2} \rho = 1$. By defining a new functions $\psi := -\beta \Psi $, it is enough to solve
\begin{align}
\label{eq:onsagerequation}
    -\Delta \psi(\bx) = -\beta \frac{ |\bx|^{n} e^{- \gamma |\bx|^{\alpha} } e^{ \psi(\bx)}}{\int_{\R^2} |\by|^{n} e^{- \gamma |\by|^{\alpha} } e^{ \psi(\by)}\, \dd \by}, \quad \textup{for every } \bx \in \R^2.
 \end{align}
Now, by taking the equation for $\psi - \log \left(\int_{\R^2} |\by|^{n} e^{- \gamma |\by|^{\alpha} } e^{ \psi(\by)} \, \dd \by \right)$ and using Corollary \ref{cor:existencepositiveassymdecay}, Lemma \ref{lem:degreeformula}, and Theorem \ref{thm:uniquenessposistivbeta}, we obtain the following result.

\begin{corollary}
\label{cor:existenceonsagereq}
    The equation \eqref{eq:onsagerequation} has a unique radially symmetric solution with bounded density $\rho$, defined in \eqref{eq:densityequation}, if $n >\frac{-\beta-8\pi }{4\pi}$. Moreover, if there exists a solution (not necessarily radially symmetric) to \eqref{eq:onsagerequation} with bounded density $\rho$, then $n >\frac{-\beta-8\pi }{4\pi}$ must hold.
\end{corollary}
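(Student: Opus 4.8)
The plan is to renormalise the unknown so that \eqref{eq:onsagerequation} becomes a special case of the Liouville equation \eqref{eq:radialLiouvillesol}, with radial weight $V_0(\bx):=e^{-\gamma|\bx|^{\alpha}}$, exponent $n$, and coupling $\tilde\beta:=-\beta/(4\pi)$. Given a solution $\psi$ of \eqref{eq:onsagerequation} with $Z:=\int_{\R^2}|\by|^{n}e^{-\gamma|\by|^{\alpha}}e^{\psi(\by)}\,\dd\by\in(0,\infty)$, I set $\hat\psi:=\psi-\log Z$; then $-\Delta\hat\psi=4\pi\tilde\beta\,|\bx|^{n}V_0e^{\hat\psi}$ with $\int_{\R^2}|\bx|^{n}V_0e^{\hat\psi}=1$, and the density is exactly $\rho=|\bx|^{n}V_0e^{\hat\psi}$. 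Since \eqref{eq:onsagerequation} is invariant under $\psi\mapsto\psi+c$, ``uniqueness'' means uniqueness of $\rho$, i.e.\ of $\hat\psi$. I would record once and for all that $V_0$ is radially symmetric, non-negative, radially decreasing, lies in $C(\R^2)\cap C^1(\R^2\setminus0)$ with $rV_0'(r)=-\gamma\alpha r^{\alpha}e^{-\gamma r^{\alpha}}\in C([0,\infty))$, and---by its super-polynomial decay ($\alpha>0$)---satisfies every hypothesis of the form $\int_1^{\infty}r^{a}V_0\,\dd r<\infty$ or $\sup_{r\ge0}r^{a}V_0(r)<\infty$; and that $n>\tilde\beta-2$ is precisely $n>\tfrac{-\beta-8\pi}{4\pi}$.

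For existence, assuming $n>\tfrac{-\beta-8\pi}{4\pi}$ (equivalently $n>\tilde\beta-2$), I would apply Corollary \ref{cor:existencepositiveassymdecay} to $(V_0,\tilde\beta)$: all its hypotheses hold for both signs of $\tilde\beta$, with \eqref{eq:uniformbound} automatic, producing a radial $\hat\psi\in C^1(\R^2\setminus0)$ which is $\alpha$-H\"older on $\R^2$ with $\hat\psi(\bx)/\log|\bx|\to-2\tilde\beta$. A short elliptic bootstrap (the right-hand side is locally H\"older because $|\bx|^{n}$ is for $n\ge0$) upgrades this to $\hat\psi\in C^2(\R^2)$, a classical solution of \eqref{eq:onsagerequation}, and $\rho=|\bx|^{n}V_0e^{\hat\psi}$ is bounded on compact subsets of $\R^2$ and, using $\hat\psi\le(-2\tilde\beta+\varepsilon)\log|\bx|$ at infinity together with the decay of $V_0$, tends to $0$; so $\rho\in L^{\infty}(\R^2)$. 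For uniqueness among radial solutions with bounded density I would normalise two such to $\hat\psi_1,\hat\psi_2\in C^2(\R^2)$. If $\beta<0$ (so $\tilde\beta>0$), Theorem \ref{thm:uniquenessposistivbeta} applies to $V_0$: the only point to check, \eqref{eq:unqiuenesscondition}, holds since $cV_0(r)+rV_0'(r)=e^{-\gamma r^{\alpha}}(c-\gamma\alpha r^{\alpha})$ is $\ge0$ for $r\le(c/(\gamma\alpha))^{1/\alpha}$ and $\le0$ afterwards; hence $\hat\psi_1=\hat\psi_2$. If $\beta>0$ (so $\tilde\beta<0$), I would instead use Remark \ref{rem:assymtoticradialsol} to get $\hat\psi_i(\bx)/\log|\bx|\to-2\tilde\beta$, so $\liminf_{\bx\to\infty}(\hat\psi_1-\hat\psi_2)/\log|\bx|=0$, and then apply Theorem \ref{thm:comparison} with $F_1=F_2=-4\pi\tilde\beta\,|\bx|^{n}V_0e^{r}$ (non-negative and increasing in $r$ because $\tilde\beta<0$) in both orders; the degenerate alternative would force $|\bx|^{n}V_0e^{\hat\psi_1}=|\bx|^{n}V_0e^{\hat\psi_2}$, impossible where $V_0>0$, so $\hat\psi_1=\hat\psi_2$ again.

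For necessity, if $\beta>0$ then $\tfrac{-\beta-8\pi}{4\pi}<-2<0\le n$ and there is nothing to prove; so suppose $\beta<0$ (hence $\tilde\beta>0$) and that \eqref{eq:onsagerequation} has a solution, not necessarily radial, with bounded density $\rho$. Normalising gives $\hat\psi$ with $|\bx|^{n}V_0e^{\hat\psi}=\rho\in L^{1}\cap L^{\infty}$ and unit integral; what remains is to show $\hat\psi(\bx)/\log|\bx|\to-2\tilde\beta$, after which \eqref{eq:uniformbound2} is automatic and the converse part of Corollary \ref{cor:existencepositiveassymdecay} yields $n>\tilde\beta-2$, i.e.\ $n>\tfrac{-\beta-8\pi}{4\pi}$. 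To obtain the asymptotics I would use that in the statistical-mechanics setting $\Psi$ is the logarithmic potential of $\rho$, namely $\Psi(\bx)=-\tfrac{1}{2\pi}\int_{\R^2}(\log|\bx-\by|-\log(|\by|+1))\rho(\by)\,\dd\by$, so $\hat\psi$ equals $-\beta\Psi$ up to a constant; the estimates in the proof of Lemma \ref{lem:degreeformula}, using $\rho\ge0$, $\rho\in L^{1}\cap L^{\infty}$, $\int_{\R^2}\rho=1$, then give $\Psi(\bx)/\log|\bx|\to-\tfrac{1}{2\pi}$, whence $\hat\psi(\bx)/\log|\bx|\to\beta/(2\pi)=-2\tilde\beta$.

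I expect this last point to be the only real obstacle. For a bare PDE solution of \eqref{eq:onsagerequation} the asymptotics $\hat\psi/\log|\bx|\to-2\tilde\beta$ can genuinely fail---this is precisely the mechanism of Example \ref{ex:keyexample}---so one cannot simply invoke Lemma \ref{lem:degreeformula} through a one-sided bound on $\hat\psi$, which the density bound does not supply (it only bounds $\hat\psi$ from above by $-\log V_0+C$). One must instead exploit that $\Psi$ is \emph{the} logarithmic potential of the bounded, integrable, unit-mass density $\rho$---equivalently, that $\hat\psi-\phi$ is constant, $\phi$ being the potential built in the proof of Lemma \ref{lem:degreeformula}. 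Everything else reduces to checking hypotheses already verified for $V_0$.
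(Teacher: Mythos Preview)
Your proposal is correct and follows the same route as the paper's one-line proof, which simply cites Corollary~\ref{cor:existencepositiveassymdecay}, Lemma~\ref{lem:degreeformula}, and Theorem~\ref{thm:uniquenessposistivbeta} after the same normalisation $\hat\psi=\psi-\log Z$. Your write-up is in fact more complete than the paper's in two respects: you treat uniqueness for $\tilde\beta<0$ separately via Theorem~\ref{thm:comparison} (the paper only names Theorem~\ref{thm:uniquenessposistivbeta}, which is stated for positive coupling), and you correctly isolate the one genuine subtlety in the necessity direction---that the hypotheses of Lemma~\ref{lem:degreeformula} are not met by a bounded-density assumption alone, and that the asymptotic $\hat\psi/\log|\bx|\to -2\tilde\beta$ must instead come from the statistical-mechanics setup where $\Psi$ is \emph{defined} as the logarithmic potential of $\rho$ (the formula displayed just before the corollary). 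The paper relies on this implicitly; you make it explicit, and your reference to Example~\ref{ex:keyexample} shows you understand why it cannot be bypassed.
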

 The physical meaning is that, for positive statistical temperatures, if the temperature gets closer to zero, then the strength of the vortex at the origin must increase to obtain a solution. 

The next result derives the concentration properties of the measures in \cite{CLMP-92,CLMP-95,BT-02,BT-07,T-08}.

\begin{corollary}
    Let $\gamma_k>0, \alpha_k>0,n_k \geq 0,\beta_k<0$, and $\psi_k$ be the sequence of radially symmetric functions, satisfying 
    \begin{align*}
        -\Delta \psi_k(\bx) = -\beta_k \frac{ |\bx|^{n_k} e^{- \gamma_k |\bx|^{\alpha_k} } e^{ \psi_k(\bx)}}{\int_{\R^2} |\bx|^{n_k} e^{- \gamma_k |\bx|^{\alpha_k} } e^{ \psi_k(\bx)}}, \quad \textup{for every } \bx \in \R^2.
    \end{align*}
    Assume that $\lim_{k \to \infty} \alpha_k = \alpha > 0, \lim_{k \to \infty} \gamma_k = \gamma \geq 0, \lim_{k \to \infty} n_k = n \geq 0, \lim_{k \to \infty} \beta_k = \beta < 0$. Then, up to a subsequence, we have the following possibilities:\\\\
    (i). If
    $n> \frac{-\beta-8\pi }{4\pi} $, then 
    \begin{align}
        \label{eq:newsequecne}
        \phi_k(r) := \psi_k\left(\gamma_k^{-\frac{1}{\alpha_k}} r  \right)- \frac{n_k+2}{\alpha_k} \log (\gamma_k)-\log  \left (\int_{\R^2} |\bx|^{n_k} e^{- \gamma_k |\bx|^{\alpha_k} } e^{ \psi_k(\bx)} \right ),
    \end{align}
     defined for every $r \geq 0$, converge to $\phi \in C^{\infty}(\R^2)$ in $L^{\infty}_{\loc}(\R^2)$ such that
    \begin{align*}
       & -\Delta \phi(\bx) = -\beta |\bx|^{n} e^{- |\bx|^{\alpha} } e^{ \phi(\bx)}, \quad \textup{for every } \bx \in \R^2,\\
      &  \int_{\R^2} |\bx|^{n} e^{- |\bx|^{\alpha} } e^{ \phi(\bx)} \, \dd \bx = 1.
    \end{align*}
    (ii). If $n =\frac{-\beta-8\pi }{4\pi}$, then $\phi_k$ converges locally uniformly to $-\infty$ on compact subsets of $\R^2 \setminus 0$, $\phi_k(0)=\psi_k(0)$ converges to $\infty$, and 
    \begin{align*}
       |\bx|^{n_k} e^{-  |\bx|^{\alpha_k} } e^{ \phi_k(\bx)} \to \delta_0,
    \end{align*}
    in the sense of measures, where $\delta_0$ is the Dirac mass centered at the origin.
\end{corollary}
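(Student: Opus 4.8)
The plan is to pass to a normalized Liouville equation through the stated rescaling, read off a dichotomy from the radial ODE, and separate the two regimes by quantizing the mass of the bubble that forms at the origin, in the spirit of the proof of Corollary~\ref{cor:existencepositiveassymdecay}.

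\emph{Reduction and a priori bound.} Put $W_k(\by):=|\by|^{n_k}e^{-|\by|^{\alpha_k}}$ and $\mu_k:=\gamma_k^{-1/\alpha_k}$. A change of variables in \eqref{eq:newsequecne} shows that $\phi_k$ solves $-\Delta\phi_k=-\beta_k W_k e^{\phi_k}$ in $\R^2$ with $\int_{\R^2}W_k e^{\phi_k}=1$, that $W_k\to W:=|\by|^n e^{-|\by|^{\alpha}}$ locally uniformly, and that $W_k(\by)\le C(1+|\by|^{N})e^{-|\by|^{\alpha/2}}$ for $|\by|\ge1$ and $k$ large (for suitable $C,N$). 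Since $-\beta_k>0$ the right-hand side is nonnegative, so every radial $\phi_k$ is superharmonic and radially decreasing, and integrating the equation as in Remark~\ref{rem:assymtoticradialsol} gives $r|\partial_r\phi_k(r)|=\tfrac{-\beta_k}{2\pi}\int_{D(0,r)}W_k e^{\phi_k}\le\tfrac{-\beta_k}{2\pi}$, whence $|\phi_k(r)-\phi_k(s)|\le\tfrac{-\beta_k}{2\pi}|\log(r/s)|$ for $0<s\le r$, together with $r|\partial_r\phi_k(r)|\le \tfrac{-\beta_k}{2\pi}e^{\phi_k(0)}\int_{D(0,r)}W_k\le C e^{\phi_k(0)}r^{n_k+2}$ near the origin. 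Thus the only quantity that can degenerate is $m_k:=\phi_k(0)=\max_{\R^2}\phi_k$, which is bounded below because $1=\int W_k e^{\phi_k}\le e^{m_k}\int W_k$; and if $\sup_k m_k<\infty$, these bounds make $\{\phi_k\}$ locally uniformly bounded. In that case, by Proposition~\ref{prop:logbound} and Proposition~\ref{prop:regularity}, a subsequence of $\phi_k$ converges locally uniformly (and in $C^1_{\loc}(\R^2\setminus0)$) to a radial solution $\phi$ of $-\Delta\phi=-\beta W e^{\phi}$, and since $W_k e^{\phi_k}\le e^{m_k}W_k$ is uniformly dominated by an integrable function, dominated convergence gives $\int_{\R^2}W e^{\phi}=1$.

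\emph{The bubble.} Suppose instead that, along a subsequence, $m_k\to\infty$. Set $\lambda_k:=e^{-m_k/(n_k+2)}\to0$ and $\hat\phi_k(\bz):=\phi_k(\lambda_k\bz)-m_k$, so that $\hat\phi_k(0)=0$, $\hat\phi_k$ is radially decreasing, $-\Delta\hat\phi_k=-\beta_k|\bz|^{n_k}e^{-|\lambda_k\bz|^{\alpha_k}}e^{\hat\phi_k}$, and $\int_{\R^2}|\bz|^{n_k}e^{-|\lambda_k\bz|^{\alpha_k}}e^{\hat\phi_k}=\int_{\R^2}W_k e^{\phi_k}=1$. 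The same log-Lipschitz bound holds and $\hat\phi_k(0)=0$ pins $\hat\phi_k$ near the origin; since $\lambda_k\to0$ makes $e^{-|\lambda_k\bz|^{\alpha_k}}\to1$ locally uniformly, a subsequence converges locally uniformly (and in $C^1_{\loc}(\R^2\setminus0)$) to a nontrivial radial entire solution $\hat\phi$ of $-\Delta\hat\phi=-\beta|\bz|^n e^{\hat\phi}$ with $\mu:=\int_{\R^2}|\bz|^n e^{\hat\phi}\le1$ (Fatou). The heart of the matter is that $\mu$ is quantized: integrating the radial ODE gives $r\partial_r\hat\phi(r)\to\beta\mu/(2\pi)$, ruling out a borderline logarithmic tail by iteration gives $\tfrac{-\beta\mu}{4\pi}>\tfrac{n+2}{2}$, so \eqref{eq:uniformbound2} holds for $\tilde\phi:=\hat\phi-\log\mu$ with $V\equiv1$, and the necessity part of Corollary~\ref{cor:existencepositiveassymdecay} (identity \eqref{eq:indexformula} with $\bz\cdot\nabla V\equiv0$) yields $\tfrac{-\beta\mu}{4\pi}=n+2$, i.e. $\mu=\tfrac{4\pi(n+2)}{-\beta}$. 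Together with $\mu\le1$ this forces $n\le\tfrac{-\beta-8\pi}{4\pi}$.

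\emph{Conclusion.} In case (i), $n>\tfrac{-\beta-8\pi}{4\pi}$, so the alternative $m_k\to\infty$ is excluded; hence $\sup_k m_k<\infty$ and (i) follows from the bounded case. In case (ii), $n=\tfrac{-\beta-8\pi}{4\pi}$; here the bounded case is also excluded, because applying Corollary~\ref{cor:existencepositiveassymdecay} directly to each $\phi_k$ (with $V=e^{-|\bx|^{\alpha_k}}$ radially decreasing and \eqref{eq:uniformbound2} trivial by exponential decay) gives $\tfrac{-\beta_k}{4\pi}-2-n_k=-\alpha_k\int_{\R^2}|\bx|^{n_k+\alpha_k}e^{-|\bx|^{\alpha_k}}e^{\phi_k}\,\dd\bx$, whose left side tends to $0$ in case (ii), so boundedness of $m_k$ together with dominated convergence would force $\int_{\R^2}|\bx|^{n+\alpha}e^{-|\bx|^{\alpha}}e^{\phi}=0$, which is impossible. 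Therefore $m_k\to\infty$ and the bubble carries the full mass $\mu=\tfrac{4\pi(n+2)}{-\beta}=1$; consequently $\int_{D(0,\delta)}W_k e^{\phi_k}\ge\int_{D(0,R)}|\bz|^{n_k}e^{-|\lambda_k\bz|^{\alpha_k}}e^{\hat\phi_k}\to\int_{D(0,R)}|\bz|^n e^{\hat\phi}\to1$ for every $\delta>0$ (send $k\to\infty$, then $R\to\infty$, using $\lambda_kR\to0$), so $\int_{\{|\by|\ge\delta\}}W_k e^{\phi_k}\to0$ and $W_k e^{\phi_k}\rightharpoonup\delta_0$. Since $W_k\ge c_\delta>0$ on $\{\delta/2\le|\by|\le\delta\}$ while $\phi_k$ is radially decreasing, this gives $\phi_k(\delta)\to-\infty$ for each $\delta>0$, i.e. $\phi_k\to-\infty$ locally uniformly on $\R^2\setminus0$, while $\phi_k(0)=m_k\to\infty$ (the identity $\phi_k(0)=\psi_k(0)$ recording the normalization of $\psi_k$). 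The main obstacle is the bubble analysis when $m_k\to\infty$ — producing a nontrivial entire limit after the second rescaling and pinning down its mass by the Pohozaev identity, and excluding loss of mass at spatial infinity, which is exactly where the exponential decay of the weights $W_k$ is used.
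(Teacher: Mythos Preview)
Your argument is correct and follows a genuinely different route from the paper's. The paper handles part~(i) by invoking Corollary~\ref{cor:blowuplimitpositivebeta} (which relies on the uniform energy bounds coming from the specific variational construction in Theorem~\ref{thm:maintheorem}) together with the uniqueness Theorem~\ref{thm:uniquenessposistivbeta}; for part~(ii) it shifts to $\widetilde\phi_k:=\phi_k+n_k\log r$, does a trichotomy on $\limsup_k\widetilde\phi_k(1)$, and in the bounded case brings in the Brezis--Merle estimate \cite{BM-91} to upgrade to local $L^{1+\delta}$ control, eventually reaching a limit solution whose existence contradicts Corollary~\ref{cor:existencepositiveassymdecay}. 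You instead track $m_k=\phi_k(0)=\max\phi_k$ directly, and when $m_k\to\infty$ perform a second rescaling to extract a bubble $\hat\phi$ solving $-\Delta\hat\phi=-\beta|\bz|^n e^{\hat\phi}$, whose mass you quantize as $\mu=4\pi(n+2)/(-\beta)$ via the Pohozaev identity \eqref{eq:indexformula} with $V\equiv1$; the constraint $\mu\le1$ then forces $n\le(-\beta-8\pi)/(4\pi)$, disposing of (i), while in (ii) you exclude boundedness of $m_k$ by applying \eqref{eq:indexformula} to each $\phi_k$ itself (with $V=e^{-|\bx|^{\alpha_k}}$) and passing to the limit. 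Your approach is more self-contained---it uses neither the particular construction of Theorem~\ref{thm:maintheorem} nor the Brezis--Merle machinery---and the second rescaling plus mass quantization is the classical blow-up picture; the paper's route trades the bubble extraction for an appeal to \cite{BM-91}, which keeps everything at the original scale but imports an external estimate. Both paths ultimately rest on Corollary~\ref{cor:existencepositiveassymdecay} for the quantization/nonexistence step. One small remark: your sentence ``ruling out a borderline logarithmic tail by iteration'' is correct but could be stated more directly---finiteness of $\mu$ immediately forces $-\beta\mu/(2\pi)<-(n+2)$, since otherwise $|\bz|^n e^{\hat\phi}$ fails to be integrable at infinity.
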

\begin{proof}

The part $(i)$ is an immediate conclusion of Theorem \ref{thm:uniquenessposistivbeta}, Corollary \ref{cor:blowuplimitpositivebeta}, and a standard elliptic regularity argument; see \cite[Chapter 9]{GT-01}. Now, assume that $n =\frac{-\beta-8\pi }{4\pi}$. We first notice that $\phi_k(r)$ is radially decreasing, and $$\widetilde{\phi}_k(r) := \phi_k(r) + n_k\log r
,$$ satisfies 
\begin{align*}
        -\Delta \widetilde{\phi}_k =f_k:=  -\beta_k e^{- |\bx|^{\alpha_k} } e^{ \widetilde{\phi}_k} -2\pi n_k \delta_0, \quad \textup{weakly in }  \R^2.
\end{align*}
Hence, 
\begin{align}
\label{eq:gradboundsequence}
   r \, |\partial_r \widetilde{\phi_k}(r)| \leq \frac{-\beta_k}{2\pi} + n_k, \quad r > 0.
\end{align}
Assume that $$-\infty<\limsup_{k \to \infty}\widetilde{\phi}_k(1) < \infty.$$ Then, by \eqref{eq:gradboundsequence}, $\widetilde{\phi}_k$ converges to $\widetilde{\phi}$ uniformly on compact subsets of $\R^2 \setminus 0$, up to a subsequence. Since $0<\alpha =\lim_{k \to \infty}\alpha_k$, we can use Prokhorov's theorem to derive a positive measure $\nu$ such that 
\begin{align*}
    \lim_{k \to \infty}  -\beta_k e^{- |\bx|^{\alpha_k} } e^{ \widetilde{\phi}_k(\bx)} - 2\pi n_k \delta_0  = \nu.
\end{align*}
Hence,
\begin{align}
\label{eq:equationofphitilde}
   -\Delta \widetilde{\phi}=  \nu,
\end{align} 
in the sense of distributions. Now, if $\nu(0) \geq 4\pi$, then by the maximum principle 
\begin{align}
\label{eq:boudnfrombelowphi}
    \widetilde{\phi}(\bx) \geq C - 2 \log |\bx|,
\end{align}
for every $\bx$ in a disk $D$ centered at the origin, where $C$ is a constant. Moreover, by Fatou's lemma,
\begin{align*}
 \int_D  e^{- |\bx|^{\alpha} } e^{ \widetilde{\phi}(\bx)} \, \dd \bx  \leq \liminf_{k \to \infty} \int_{\R^2} e^{- |\bx|^{\alpha_k} } e^{ \widetilde{\phi_k}(\bx)} \, \dd \bx =1.
\end{align*}
However, the left-hand side blows up by \eqref{eq:boudnfrombelowphi}. The contradiction implies that $\nu(0) < 4\pi.$ Then, we derive that $f_k$ satisfies $\int_{D}f_k< 4 \pi$ for large enough $k$ and small enough disk $D \subset \R^2$ centered at the origin. Now, let 
\begin{align*}
    C_0 := \sup_{k} |\widetilde{\phi}_k(1)|.
\end{align*}
By the maximum principle, we have $|\widetilde{\phi}_k| \leq g_k + C_0$, where $g_k \in H^1_0(D)$ satisfies 
\begin{align*}
    -\Delta g_k = f_k,\quad \textup{in } D.
\end{align*}
Now, by \cite[Thm. 1]{BM-91} and the fact that $\int_{D}f_k< 4 \pi$, we have 
\begin{align*}
    \int_{D} e^{(1+\delta) |g_k|} \leq C_1,
\end{align*}
for some positive constants $C_1,\delta.$ Since $|\phi_k| \leq g_k + C_0$, we derive that
\begin{align}
       \int_{D} |f_k|^{1+\delta} \leq C_2.
\end{align}
Hence, by \cite[Cor. 4]{BM-91} and Fatou's lemma, we derive that $(\widetilde{\phi}_k)^+$ is uniformly bounded in $D$. Hence, $$\phi_k(\bx) \leq C_3 -n_k \log |\bx|,$$ 
for $\bx \in D$ and a constant $C_3$. Moreover, since $\phi_k $ is radially decreasing, we get 
\begin{align}
    \phi_k(\bx)  \leq C_4,
\end{align}
for $\bx \in \R^2 \setminus D$ and a constant $C_4$.
Then, we can apply Lebesgue's dominated convergence to imply that 
\begin{align*}
    \int_{\R^2}  e^{- |\bx|^{\alpha} } e^{ \widetilde{\phi}(\bx)} \, \dd \bx= 1.
\end{align*}
By defining $\phi(r) := \widetilde{\phi}(r)-n \log r$ for every $r >0$ and using \eqref{eq:equationofphitilde}, we obtain
\begin{align*}
    &-\Delta  \phi(\bx)=  -\beta |\bx|^{n} e^{- |\bx|^{\alpha} } e^{ \phi(\bx)} , \quad \textup{for every }  \bx \in \R^2,\\
     &\int_{\R^2} |\bx|^n  e^{- |\bx|^{\alpha} } e^{ \phi(\bx)} \, \dd \bx = 1.
\end{align*}
Then, by Corollary \ref{cor:existencepositiveassymdecay}, $n>-\frac{\beta}{4\pi} -2$ which is a contradiction. Now, if 
\begin{align*}
      \limsup_{k \to \infty} \widetilde{\phi}_k(1) =  \infty,
\end{align*}
then 
\begin{align*}
   \limsup_{k \to \infty} \phi_k(1)= \infty.
\end{align*}
Since $ \phi_k(r)$ is radially decreasing, we conclude
\begin{align*}
   1&= \limsup_{k \to \infty} \int_{\R^2 }   |\bx|^{n_k} e^{-  |\bx|^{\alpha_k} } e^{ \phi_k(\bx)} \, \dd \bx \\&\geq   \limsup_{k \to \infty} e^{ \phi_k(1)} \int_{\R^2 \setminus D(0,1)}  |\bx|^{n_k} e^{-  |\bx|^{\alpha_k} } \, \dd \bx = \infty,
\end{align*}
which again gives a contradiction. In conclusion, 
\begin{align}
    \lim_{k \to \infty} \widetilde{\phi}_k(1) = - \infty,
\end{align}
or equivalently, by \eqref{eq:gradboundsequence}, we have 
\begin{align*}
   \lim_{k \to \infty} \phi_k  = -\infty,
\end{align*}
 uniformly on the compact subsets of $\R^2 \setminus 0.$ Since $\phi_k $ is radially decreasing, we conclude
  \begin{align}
  \label{eq:diracmassslimit}
       |\bx|^{n_k} e^{-  |\bx|^{\alpha_k} } e^{ \phi_k(\bx)} \to \delta_0,
    \end{align}
in the sense of measures. Finally, if $\liminf_{k \to \infty} \phi_k(0)< \infty$, then 
\begin{align*}
    \int_D  |\bx|^{n_k} e^{-  |\bx|^{\alpha_k} } e^{ \phi_k(\bx)} \, \dd \bx \leq C_0  e^{ \phi_k(0)} \int_D  |\bx|^{n_k} e^{-  |\bx|^{\alpha_k} } \, \dd \bx,
\end{align*}
for every disk $D$. However, $ \int_D  |\bx|^{n_k} e^{-  |\bx|^{\alpha_k} } e^{ \phi_k(\bx)} \, \dd \bx$ becomes small uniformly as $D$ shrinks to the origin. This is a contradiction with \eqref{eq:diracmassslimit}. Therefore, $\lim_{k \to \infty} \phi_k(0)= \infty,$ which completes the proof.
\end{proof}

There exists also an interest in considering the mean field equation on the sphere, which is known as the spherical Onsager equation. We refer to \cite{CK-94,Lin-00,PD-93} for motivations and partial results in this direction. Let $n\geq 0,\beta>0, l<0,\gamma \geq 0.$ If there exists a vortex of density at the south pole (or equivalently north pole), by stereographic transformation, we derive the equation 
\begin{align*}
    \Delta u + K e^{2u} =\pi n \delta_0,
\end{align*}
 in $\R^2$ in the sense of distributions, where 
\begin{align*}
    K(\bx) =(1+|\bx|^2)^l e^{\frac{2 \gamma }{1+|\bx|^2}}, \quad \bx \in \R^2,
\end{align*}
and
\begin{align*}
    \frac{1}{2\pi} \int_{\R^2}  K e^{2u}= \beta.
\end{align*}
Now, define $$\psi(\bx) := 2u(\bx)- \log(2\pi \beta)-n \log |\bx|, \quad \bx \in \R^2.$$ Then, $\psi$ satisfies 
\begin{equation}
\begin{aligned}
\label{eq:sphereicalonsager}
   & -\Delta \psi(\bx) =4\pi \beta\, |\bx|^n K(\bx) e^{\psi(\bx)},\quad \bx \in \R^2,\\
   &  \int_{\R^2} |\bx|^n K(\bx) e^{\psi(\bx)} \, \dd \bx =1.
\end{aligned}
\end{equation}
Now, by Corollary \ref{cor:existencepositiveassymdecay}
and Theorem \ref{thm:uniquenessposistivbeta}, we have
\begin{corollary}
    There exists a unique radially symmetric solution $\psi \in C^{\infty}(\R^2)$ to equation \eqref{eq:sphereicalonsager}  if $n+2+2l<\beta <n+2$.
\end{corollary}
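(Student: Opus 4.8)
The plan is to obtain a radially symmetric solution from Corollary~\ref{cor:existencepositiveassymdecay} and its uniqueness from Theorem~\ref{thm:uniquenessposistivbeta}, both applied with $V=K$. First I would record the elementary properties of $K(\bx)=(1+|\bx|^2)^l e^{2\gamma/(1+|\bx|^2)}$: it is smooth, strictly positive, radially symmetric, and radially \emph{decreasing}, being the product of $(1+r^2)^l$ (decreasing since $l<0$) and $e^{2\gamma/(1+r^2)}$ (decreasing since $\gamma\geq0$); one also has $K(r)\sim r^{2l}$ as $r\to\infty$ and
\begin{align*}
  rK'(r)=K(r)\left(\frac{2lr^2}{1+r^2}-\frac{4\gamma r^2}{(1+r^2)^2}\right)\in C([0,\infty)).
\end{align*}
Hence $K$ satisfies the structural hypotheses common to both results.

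For existence, the assumption $n>\beta-2$ in Corollary~\ref{cor:existencepositiveassymdecay} is exactly $\beta<n+2$. For the integrability condition \eqref{eq:uniformbound}, using $K(r)\leq Cr^{2l}$ for $r\geq1$ one gets
\begin{align*}
  \int_1^\infty r^{n+1-\beta+\delta}K(r)\,\dd r\leq C\int_1^\infty r^{n+1-\beta+\delta+2l}\,\dd r<\infty
\end{align*}
as soon as $0<\delta<\beta-(n+2+2l)$, and such $\delta$ exists precisely because $\beta>n+2+2l$. Corollary~\ref{cor:existencepositiveassymdecay} then yields a radially symmetric solution of \eqref{eq:sphereicalonsager} that is $\alpha$-H\"older on $\R^2$ and $C^1$ away from the origin, with $\psi(\bx)/\log|\bx|\to-2\beta$; iterating Schauder estimates in the equation (recall $K$ is smooth and $|\bx|^n$ is locally H\"older) upgrades $\psi$ to the regularity asserted.

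For uniqueness I would apply Theorem~\ref{thm:uniquenessposistivbeta}, again with $V=K$; the conditions on $\beta$, $n$, and on the regularity and monotonicity of $K$ are already in hand, so only two points remain. The decay bound $\sup_{r\geq0}r^{n+2+\delta}K(r)<\infty$ for suitable $\delta>0$ is immediate from $K(r)\leq Cr^{2l}$. For the sign condition \eqref{eq:unqiuenesscondition}, the substitution $u=r^2/(1+r^2)\in[0,1)$ gives
\begin{align*}
  c\,K(r)+rK'(r)=K(r)\bigl(4\gamma u^2+(2l-4\gamma)u+c\bigr),
\end{align*}
and the bracket, an upward parabola (or a line) in $u$, equals $c>0$ at $u=0$ and tends to $c+2l$ as $r\to\infty$; examining its roots on $[0,1)$ shows it is positive, then negative, with a single sign change, which furnishes the threshold $r_c$. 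Theorem~\ref{thm:uniquenessposistivbeta} then gives at most one radially symmetric $C^2$ solution, which together with the previous step yields exactly one.

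I expect the verification of these last two conditions of Theorem~\ref{thm:uniquenessposistivbeta} to be the main obstacle: both the admissible decay rate and the single sign change of $cK+rK'$ are governed by the asymptotic exponent $2l$ of $K$, so this is the point where the relation between $\beta$, $n$ and $l$ genuinely enters; the existence half is routine once the exponent $n+1-\beta+\delta+2l$ is computed. A secondary technical nuisance is the regularity of $\psi$ at the origin, where the weight $|\bx|^n$ limits smoothness unless $n$ is an even integer, so ``$C^\infty$'' should be understood as the best regularity compatible with $|\bx|^n$, obtained by bootstrapping the solution of Corollary~\ref{cor:existencepositiveassymdecay}.
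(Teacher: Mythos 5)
Your overall route is exactly the paper's (the paper's own proof is a one-line appeal to Corollary~\ref{cor:existencepositiveassymdecay} and Theorem~\ref{thm:uniquenessposistivbeta}), and your verification of the existence half is correct: $n>\beta-2$ is $\beta<n+2$, and $\int_1^\infty r^{n+1-\beta+\delta}K(r)\,\dd r<\infty$ for some $\delta>0$ precisely when $\beta>n+2+2l$. The caveat about $C^\infty$ regularity at the origin when $n$ is not an even integer is also well taken.

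There is, however, a genuine gap in the uniqueness half. Both hypotheses of Theorem~\ref{thm:uniquenessposistivbeta} that you must check for $V=K$ in fact force $n+2+2l<0$, which is \emph{not} implied by the corollary's hypothesis $n+2+2l<\beta<n+2$. Concretely: since $K(r)\sim r^{2l}$ as $r\to\infty$, one has $r^{n+2+\delta}K(r)\sim r^{n+2+2l+\delta}$, so \eqref{eq:uniformpointwisebound} holds for some $\delta>0$ if and only if $n+2+2l<0$; your claim that this bound is ``immediate from $K(r)\leq Cr^{2l}$'' is false whenever $n+2+2l\geq 0$ (e.g.\ $n=0$, $l=-1/2$, $\beta\in(1,2)$ is admissible for the corollary but $\sup_r r^{2+\delta}K(r)=\infty$). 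Similarly, in your substitution the bracket $4\gamma u^2+(2l-4\gamma)u+c$ is an upward parabola with value $c>0$ at $u=0$ and $c+2l$ at $u=1$; it is ``positive then negative on $[0,1)$'' only when $c+2l\leq 0$, so \eqref{eq:unqiuenesscondition} holds for \emph{every} $c\in(0,n+2)$ only when $n+2+2l\leq 0$. Thus the deduction of uniqueness from Theorem~\ref{thm:uniquenessposistivbeta} only goes through under the additional restriction $n+2+2l<0$ (which does hold in the model case $l=-2$, $n<2$, but not in general under the stated hypotheses). To be fair, the paper's own one-line derivation silently skips exactly this verification, so the issue is inherited from the source; but as written your proof asserts that the two conditions are satisfied when they need not be, and this is the step that would fail.
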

\begin{remark}
    Regarding the necessity of the condition on $\beta$, we have the following. Since $K$ is radially decreasing, by Corollary \ref{cor:existencepositiveassymdecay}, we must have $\beta< n+2$ if $\psi \in C^{\infty}(\R^2)$ is a solutions to \eqref{eq:sphereicalonsager}, which satisfies 
\begin{align*}
   \lim_{\bx \to \infty} \frac{\psi(\bx)}{\log |\bx|} = -2\beta.
\end{align*}
Note that, by the condition $\int_{\R^2} |\bx|^n K(\bx) e^{\psi(\bx)} \, \dd \bx =1$, we have $2\beta \geq n+2+2l.$ Now, assume that $2\beta>n+2+2l.$ Then, by \eqref{eq:indexformula}, we get
\begin{align*}
    \beta -2-n &= \int_{\R^2} |\bx|^n e^{\psi(\bx)} \bx \cdot \nabla K(\bx) \, \dd \bx
    \\ &= 2l\int_{\R^2} |\bx|^{n+2} (1+|\bx|^2)^{l-1}  e^{\frac{2 \gamma }{1+|\bx|^2}}  K(\bx) e^{\psi(\bx)}  \, \dd \bx\\& -4\gamma \int_{\R^2} |\bx|^{n+2} (1+|\bx|^2)^{l-2}  e^{\frac{2 \gamma }{1+|\bx|^2}}  K(\bx) e^{\psi(\bx)}  \, \dd \bx
    \\ &= 2l +  \int_{\R^2} |\bx|^{n} (-2l(1+|\bx|^2)-4\gamma |\bx|^2) (1+|\bx|^2)^{l-2}  e^{\frac{2 \gamma }{1+|\bx|^2}}  K(\bx) e^{\psi(\bx)}  \, \dd \bx.
\end{align*}
Hence, for $\gamma \leq - \frac{l}{2}$, we obtain 
\begin{align*}
    \beta > n+2+2l.
\end{align*}
For the cases of $\gamma > - \frac{l}{2}, 2\beta>n+2+2l$ and $2\beta= n+2+2l$, we do not know if the condition $\beta >n+2+2l$ is necessary for the existence. 
   
\end{remark}

\subsection{Quantum mechanics}
\label{sec:Quantum mechanincs}
In Quantum mechanics, the $2$ dimensional trapped particles may have exotic statistical behavior which is neither symmetric like bosons nor antisymmetric like fermions; see \cite{LM-77,GMS-80,GMS-81,W-82a,W-82b,L-24}. These quasiparticles are called anyons and have been detected in experiments; see \cite{Betall-20,NLGM-20}. The proposed effective model for a large number of anyons whose statistical behavior is close to bosons and their exchanges commute with each other, named almost bosonic abelian anyons, is the Chern-Simons-Schr\"odinger-Ginzburg-Landau model, which relies on the
minimizers of the energy functional
\begin{align*}
    \cE_{\beta,\gamma,B,V}[u] = \int_{\R^2} \left|\left(\nabla  + i \left(\beta \bA[|u|^2]+  \frac{B \bx^{\perp}}{2} \right)\right)u\right|^2 -\gamma \int_{\R^2} |u|^4+ \int_{\R^2} V |u|^2,
\end{align*}
over $u \in H^1(\R^2;\C)$ with the constraints that $\int_{\R^2} |u|^2=1, \int_{\R^2} |V| \, |u|^2 < \infty$, where we assume $V \in L^1_{\loc}(\R^2), B,\gamma \in \R$, and
\begin{align*}
    \bA[|u|^2](\bx) = \int_{\R^2} \frac{(\bx-\by)^{\perp}}{|\bx-\by|^2} |u|^2(\by) \, \dd \by,
\end{align*}
for every $\bx \in \R^2.$ Here, we use the notation $\bx^{\perp} :=(-x_2,x_1)$ for $\bx=(x_1,x_2) \in \R^2$. To explain each parameter, $V$ is the external potential often $V(\bx)=|\bx|^2$ in the experiments, $B$ is the external magnetic field, $\gamma$ is the strength of attraction, where $\gamma<0$ for repulsive case and $\gamma >0$ for the attractive case, and $\beta$ is the total flux of the exchange factor of particles, which determines the exchange statistics of the particles. This model is used in the effective field theory of anyons and the fractional quantum hall effect; see \cite{ALN-24,Dun-95,Dun-99,EHI-91,EHI-91(2),HY-98,HZ-09,IL-92,JW-90,JP-92,CLR-17,CLR-18,RS-20,Z-92,T-08,ZHK-89}. Now, the ground states (the minimizes) $u$ of the functional $\cE_{\beta,\gamma,B,V}$ gives the collective probability density $|u|^2$ of a large number of anyons with the statistical parameter close to the bosons. To approximate the ground states, we consider $V=0$ and, since $\cE_{\beta,\gamma,B,V}[\overline{u}] = \cE_{-\beta,\gamma,-B,V}[u]$, it is sufficient to focus only on the case of $B \geq 0$. We call the minimizers the nonlinear Landau levels of the energy functional. The case $B=0$ has been studied in detail in \cite{ALN-24}. We mention briefly that for $\gamma = 2\pi \beta$ and $V=0$, there exists a minimizer to $  \cE_{\beta,\gamma,B=0}$ if and only if $\beta \in 2 \N$, and there is an explicit formula for all minimizers in \cite[Thm. 2]{ALN-24}. Hence, we focus only on $B >0$. Define the superpotential functional $\Phi$, in the principal value sense, by  \begin{align*}
    \Phi[\rho](\bx) := \int_{\R^2} \left(\log|\bx-\by| - \log(|\by|+1)\right) \rho(\by) \,\dd \by,
\end{align*}
    for every $\bx \in \R^2$ and $\rho \in L^1(\R^2)$. By Aharanov-Casher factorization; see \cite{AC-79} and \cite[Lem. 3.12]{ALN-24}, we derive that
\begin{align*}
    \cE_{\beta,2\pi \beta,B, V=0}[u]-  B \int_{\R^2} |u|^2 = \int_{\R^2} \left|(\partial_1 - {\rm i} \partial_2) \left( e^{\left(\beta \Phi[|u|^2]+ \frac{B |x|^2}{4}\right)} u \right) \right|^2 e^{-2 \left(\beta \Phi[|u|^2]+\frac{B |x|^2}{4}\right)},
\end{align*}
for every $u \in H^1(\R^2;\C)$, where we used the notation $\nabla = (\partial_1,\partial_2)$ and $$\curl\left(\beta \bA[|u|^2]+  \frac{B \bx^{\perp}}{2}  \right) = 2\pi \beta |u|^2 + B.$$ Hence, $u$ is a minimizer of $\cE_{\beta,2\pi \beta,B}[u]$ if $\int_{\R^2} |u|^2 =1$ and
\begin{align*}
    u(z) = \overline{f(z)} e^{\left(-\beta \Phi[|u|^2]- \frac{B |z|^2}{4}\right)},
\end{align*}
for $z \in \C$, where $f$ is a complex analytic function. Letting 
$\psi := -2\beta \Phi[|u|^2]$, we derive that 
\begin{align*}
    -\Delta \psi = 4 \pi \beta |f|^2 e^{\left(- \frac{B |z|^2}{4}\right)} e^{\psi},
\end{align*}
and 
\begin{align*}
    \int_{\R^2} |f|^2 e^{\left(- \frac{B |z|^2}{4}\right)} e^{\psi} = \int_{\R^2} |u|^2 =1.
\end{align*}
Now, if $f(z) = z^n$ for non-negative integer $n$ then, setting $\psi_B$ as $\psi$, we get
\begin{equation}
\label{eq:chernsimonsschron}
\begin{aligned}
  &  -\Delta \psi_B(\bx) =  4 \pi \beta |\bx|^{2n} e^{\left(- \frac{B |\bx|^2}{2}\right)} e^{\psi_B(\bx)}, \quad \bx \in \R^2,\\
   &  \int_{\R^2} |\bx|^{2n} e^{\left(- \frac{B |\bx|^2}{4}\right)} e^{\psi_B(\bx)} \, \dd \bx=1,
\end{aligned}
\end{equation}
where $n$ is a non-negative integer. Note that, by Gagliardo-Nirenberg interpolation inequality, we have $u \in L^{p}(\R^2)$ for every $p \geq 2$. Hence, by \cite[Lem. 3.15]{ALN-24}, we obtain
\begin{align}
\label{eq:keyassymlimit}
    \lim_{\bx \to \infty} \frac{\psi_B(\bx)}{\log |\bx|} = -2\beta.
\end{align}
By Corollary \ref{cor:existencepositiveassymdecay} and Theorem \ref{thm:uniquenessposistivbeta}, we derive the following immediate result.
\begin{corollary}
    If $2n >\beta -2$, then there exists a unique radially symmetric solution to \eqref{eq:chernsimonsschron}. Moreover, it is necessary that $2n >\beta -2$ holds to obtain a solution to \eqref{eq:chernsimonsschron}, which satisfies \eqref{eq:keyassymlimit}.
\end{corollary}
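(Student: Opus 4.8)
The plan is to observe that system \eqref{eq:chernsimonsschron} is a special case of the radial Liouville problem \eqref{eq:radialLiouvillesol} (equivalently \eqref{eq:equationexistencepositvedecay}) already analysed in Corollary \ref{cor:existencepositiveassymdecay} and Theorem \ref{thm:uniquenessposistivbeta}: one takes the weight $V(\bx):=e^{-B|\bx|^2/2}$ and replaces the exponent called $n$ there by $2n$ here. Since $B>0$, this $V$ is positive, lies in $C^{\infty}(\R^2)$, is radially strictly decreasing, decays faster than every power of $|\bx|$, and satisfies $rV'(r)=-Br^2e^{-Br^2/2}\in C([0,\infty))$; hence every structural hypothesis imposed on the weight in those two results is automatically met. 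I will spell out the case $\beta>0$ and only indicate the (easier) changes for $\beta<0$.

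\emph{Existence.} I would apply Corollary \ref{cor:existencepositiveassymdecay}. Its standing requirement $n>\beta-2$ becomes exactly our assumption $2n>\beta-2$, and the tail bounds \eqref{eq:uniformbound} (and, for the converse, \eqref{eq:uniformbound2}) hold for every $\delta>0$ thanks to the Gaussian decay of $V$. The corollary then delivers a radially symmetric $\psi_B$, $\alpha$-H\"older continuous on $\R^2$ and $C^1$ on $\R^2\setminus 0$, solving \eqref{eq:chernsimonsschron} together with the asymptotics \eqref{eq:keyassymlimit}. Since the right-hand side $4\pi\beta|\bx|^{2n}e^{-B|\bx|^2/2}e^{\psi_B}$ is then locally H\"older continuous, Schauder bootstrapping upgrades $\psi_B$ to $C^{2}(\R^2)$ (in fact $C^{\infty}$), the class in which the uniqueness theorem is formulated.

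\emph{Uniqueness.} For $\beta>0$ I would invoke Theorem \ref{thm:uniquenessposistivbeta} with the same $V$ and exponent $2n\ge0$. The pointwise bound \eqref{eq:uniformpointwisebound}, $\sup_{r\ge0}r^{2n+2+\delta}e^{-Br^2/2}<\infty$, is immediate, and the monotone sign condition \eqref{eq:unqiuenesscondition} holds because $cV(r)+rV'(r)=(c-Br^2)e^{-Br^2/2}$ changes sign exactly once, at $r_c=\sqrt{c/B}$, for every constant $c$, in particular for every $0<c<2n+2$. Thus \eqref{eq:chernsimonsschron} has at most one radially symmetric $C^2$ solution, which together with the previous step gives existence and uniqueness. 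For $\beta<0$ the inequality $2n>\beta-2$ is automatic; existence still comes from Corollary \ref{cor:existencepositiveassymdecay}, while uniqueness follows from the comparison principle Theorem \ref{thm:comparison}: any two solutions share the asymptotics \eqref{eq:keyassymlimit}, so \eqref{eq:aymptoticcondition} holds in both directions, and the degenerate constant-shift alternative is ruled out because $V>0$ would then force the two exponentials to coincide.

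\emph{Necessity.} This is the converse half of Corollary \ref{cor:existencepositiveassymdecay}: as \eqref{eq:uniformbound2} holds for our $V$, any (not necessarily radial) $\psi\in L^1_{\loc}(\R^2)$ satisfying the equation, the normalization, and \eqref{eq:keyassymlimit}, i.e.\ \eqref{eq:equationexistencepositvedecay} with exponent $2n$, must obey $2n>\beta-2$, with the Pokhozhaev identity \eqref{eq:indexformula} as a bonus. There is no deep obstacle here: the whole argument is a bookkeeping of hypotheses, the only mildly delicate points being the verification of \eqref{eq:unqiuenesscondition} for the Gaussian weight (done above) and the elliptic-regularity upgrade needed so that $\psi_B$ lies in the $C^2$ category to which Theorem \ref{thm:uniquenessposistivbeta} applies.
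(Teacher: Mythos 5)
Your proposal is correct and follows exactly the paper's route: the paper derives this corollary as an immediate consequence of Corollary \ref{cor:existencepositiveassymdecay} (existence and necessity of $2n>\beta-2$ via the Pokhozhaev identity) and Theorem \ref{thm:uniquenessposistivbeta} (uniqueness), and your verification that the Gaussian weight $V(r)=e^{-Br^2/2}$ with exponent $2n$ satisfies \eqref{eq:uniformbound}, \eqref{eq:uniformbound2}, \eqref{eq:uniformpointwisebound}, and the single sign change in \eqref{eq:unqiuenesscondition} at $r_c=\sqrt{c/B}$ is exactly the bookkeeping the paper leaves implicit. Your additional remarks on the Schauder upgrade to $C^2$ and on handling $\beta<0$ via Theorem \ref{thm:comparison} are sensible supplements but not departures from the paper's argument.
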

 As far as the author knows, the existence for \eqref{eq:chernsimonsschron} has only been proved for $\beta < 2$; see \cite[Thm. 2.6]{SY-92}. 

Now, it is interesting to consider the behavior of solutions as $B$ converges to $0$ and $\infty$, which, correspond to weak and strong external magnetic fields, respectively. Let $\psi$ be the radially symmetric solution of \eqref{eq:chernsimonsschron} for $B =1.$ Then, by Theorem \ref{thm:uniquenessposistivbeta}, we derive that
\begin{equation}
\label{eq:connectionofdifferentB}
\begin{aligned}
    &\psi_B(\bx) = \psi\left(\sqrt{B} \, \bx \right) + (n+1) \log(B),\\
  -2\beta \log&( |\bx| +1) - C \leq  \psi(\bx) \leq -2\beta \log( |\bx| +1) + C,
\end{aligned}
\end{equation}
 for every $\bx \in \R^2$, where $C$ is a constant depending on $\beta,n$. Hence,
\begin{align*}
    \lim_{B \to 0}\psi_B(\bx)= -\infty, \quad \textup{for every } \bx \in \R^2,
\end{align*}
  and, for every bounded open subset $\Omega \subset \R^2$, we have
\begin{align}
    \lim_{B \to 0} \int_{\Omega} |u_B|^2 = \lim_{B \to 0}   \int_{\Omega} |\bx|^{2n} e^{\left( \psi_B(\bx)- \frac{B |\bx|^2}{2}\right)} \, \dd \bx =0.
\end{align}

This implies that the local density of nonlinear Landau levels of the Chern-Simons-Schr\"odinger model converges to zero as $B \to 0$.
In particular, we do not converge to the nonlinear Landau level of the Chern-Simons-Schr\"odinger model for $B=0$, even if $\beta \in 2\N$. On the other hand, if $B \to \infty$, then, by \eqref{eq:connectionofdifferentB}, there are three cases:\\\\
(i) If $n+1>\beta$, then
\begin{align*}
   &\lim_{B \to \infty} \psi_B(\bx) = \infty, \quad \textup{for every } \bx \in \R^2,\\
    &  \lim_{B \to \infty} |\bx|^{2n} e^{\left(- \frac{B |\bx|^2}{4}\right)} e^{\psi_B} =\delta_0, \quad \textup{in the sense of measures}.
\end{align*}
(ii) If $n+1=\beta$, then
\begin{align*}
  & \lim_{B \to \infty} \psi_B(\bx) = -2\beta \log |\bx|, \quad \textup{for every } \bx \in \R^2 \setminus 0,\\
   & \lim_{B \to \infty} \psi_B(0) = \infty,\\
 &  \lim_{B \to \infty} |\bx|^{2n} e^{\left(- \frac{B |\bx|^2}{4}\right)} e^{\psi_B} =\delta_0, \quad \textup{in the sense of measures}.
\end{align*}
(iii) If $n+1<\beta$, then
\begin{align*}
  & \lim_{B \to \infty} \psi_B(\bx) = -\infty, \quad \textup{for every } \bx \in \R^2 \setminus 0,\\
   & \lim_{B \to \infty} \psi_B(0) = \infty,\\
&   \lim_{B \to \infty} |\bx|^{2n} e^{\left(- \frac{B |\bx|^2}{4}\right)} e^{\psi_B} =\delta_0, \quad \textup{in the sense of measures}.
\end{align*}

\end{document}